%% file: main.tex
\documentclass[12pt]{amsart}
\usepackage[a4paper, footskip=0.5in,  headheight = 0.5in, top=1.25in, bottom=1.25in,  right=1in,  left=1in]{geometry}
\usepackage{macros}
\allowdisplaybreaks

\title[Induced packing treewidth]{Induced packing treewidth}

\author{Amir Nikabadi$^{\dagger}$}\thanks{$^{\dagger}$IT University of Copenhagen, Denmark (\texttt{amir@itu.dk}).
Supported by the Independent Research Fund Denmark (DFF), grant agreement number 2098-00012B}
\author{Paweł Rzążewski$^{\parallel}$}\thanks{$^{\parallel}$Warsaw University of Technology, Poland (\texttt{pawel.rzazewski@pw.edu.pl}). Supported by the National Science Centre grant 2024/54/E/ST6/00094.}
\date{}

\begin{document}
\begin{titlepage}

\begin{abstract}
In this paper, we introduce a framework that aims to unify classes defined by forbidden induced subgraphs or induced minors with classes defined by the existence of certain structured tree decompositions.
Let $\mathcal{H}$ be a fixed family of graphs. We define \emph{induced-$\mathcal{H}$-packing treewidth},
a tree-decomposition-based graph parameter that, for each bag, measures the maximum number of pairwise
anticomplete induced copies of graphs from $\mathcal{H}$ intersecting that bag. This notion generalizes
some previously studied parameters: when $\mathcal{H}=\{P_1\}$, it is equivalent to tree-independence
number, and when $\mathcal{H}=\{P_2\}$, it is equivalent to induced matching treewidth.

We show that bounded induced-$\mathcal{H}$-packing treewidth yields new algorithmic consequences for a
 range of choices of $\mathcal{H}$.
In particular, we prove the following results for graphs of bounded induced-$\mathcal{H}$-packing treewidth.

\begin{itemize}
\item If $\mathcal{H}$ contains $P_4$, then \textsc{Maximum-Weight Independent Set} (\textsc{MWIS}) can be solved in quasipolynomial time.
\item If $\mathcal{H}$ contains $P_5$, then \textsc{List 3-Coloring} can be solved in quasipolynomial time.

\item  If $\mathcal{H}$ contains $P_3$, then \textsc{Odd Cycle Transversal} and several other problems can be solved in quasipolynomial time.
Furthermore, there is a quasipolynomial-time approximation scheme (QPTAS) for finding a largest induced subgraph of bounded treewidth that satisfies a given hereditary \textsf{CMSO}$_2$ formula.

\item If $\mathcal{H}$ contains all cycles or any path,
then \textsc{MWIS} admits a QPTAS.
\end{itemize}

These results partially answer and substantially extend a question of Bodlaender, Fomin, and Korhonen
[SODA~2026] on the tractability of \textsc{MWIS} for graphs of bounded induced-$\mathcal{H}$-packing
treewidth for $\mathcal{H}=\{P_3\}$ and for $\mathcal{H}$ equal to the family of all cycles.
\end{abstract}
\maketitle

\thispagestyle{empty}
\end{titlepage}

\thispagestyle{empty}
{
  \hypersetup{linkcolor=red!50!black}
  \tableofcontents
}
\thispagestyle{empty}
\newpage\setcounter{page}{1}

\section{Introduction}\label{sec:intro}
\input{sec-intro}

\section{Technical overview}\label{sec:overview}
\input{sec-overview}
\section{Preliminaries}\label{sec:prelim}
\input{sec-prelim}
\section{Container lemmas}\label{sec:containers}
\input{sec-containers}
\section{\MWIS in graphs of bounded induced-$P_4$-packing treewidth}\label{sec:P4}
\input{sec-mwisp4}

\section{\lcol{3} in graphs of bounded induced-$P_5$-packing treewidth}\label{sec:P5}
\input{sec-p5-3coloring}
\section{Algorithms for graphs of bounded induced-$P_3$-packing treewidth}\label{sec:P3}
\input{sec-p3.tex}
\section{Induced packing treewidth and dominated balanced separators}\label{sec:dbs}
\input{sec-dbs}
\section{Computing decompositions}\label{sec:computing}
\input{sec-computing}

\section{Conclusion}\label{sec:conclusion}
\input{sec-conclusion}

\bibliographystyle{abbrvurl}
\bibliography{ref}

\end{document}

%% file: sec-intro.tex
Among classic computational graph problems, \textsc{Max Independent Set} (and its weighted variant) and \textsc{3-Coloring} (and its list variant) are arguably two of the best studied ones.
In the \textsc{Max Weight Independent Set} (\MWIS) problem, we are given a graph $G$ and a weight function $\wei:V(G)\to \mathbb{Q}_{\ge 0}$, and the goal is to find an independent set (i.e., a set of pairwise non-adjacent vertices) of maximum total weight.
In the \lcol{3} problem, we are given a graph $G$ and a list assignment $L:V(G)\to 2^{\{1,2,3\}}$, and the goal is to find a proper coloring of $G$ such that each vertex $v$ is colored with a color from its list $L(v)$.
Both problems are among Karp's 21 \NP-hard~\cite{karp2009reducibility} problems and remain hard for many notions of hardness considered in complexity theory~\cite{cygan2015parameterized,DBLP:journals/eccc/ECCC-TR97-038,DBLP:conf/focs/Khot01,DBLP:journals/tcs/GareyJS76,DBLP:journals/jcss/ImpagliazzoPZ01,DBLP:journals/siamcomp/ChalermsookCKLM20,DBLP:conf/focs/LinRSW23}.
This persistent hardness naturally leads to a central question in algorithmic graph theory:\\
\centerline{\emph{Which restrictions on the input instances make \MWIS and \lcol{3} tractable?}}

\paragraph{Forbidden induced subgraphs.}
Over the years, significant attention has been given to studying classes of instances defined by forbidding certain induced subgraphs.
For graphs $G$ and $H$, we say that $G$ is \emph{$H$-free} if $G$ does not contain an induced subgraph isomorphic to $H$.
For a family of graphs $\cH$, we say that $G$ is \emph{$\cH$-free} if $G$ is $H$-free for every $H \in \cH$.
A special role in the paper is played by classes excluding a fixed path, or, more generally, a fixed linear forest, i.e., a forest of paths. By $P_t$ we denote the path on $t$ vertices. For an integer $k$ and a graph $H$,
by $kH$ we denote the disjoint union of $k$ copies of $H$.

Despite the extensive study of \MWIS in $H$-free graphs, its complexity is still not fully understood.
It is known that the problem is \NP-hard in $H$-free graphs unless $H$ is a forest of subdivided claws~\cite{alekseev1982effect} -- a subcubic forest in which every component has at most one vertex of degree 3 (let us call this family $\cS$). 
On the other hand, polynomial-time algorithms are known only for very restricted cases~\cite{DBLP:journals/jct/Minty80,DBLP:journals/dam/Sbihi80,lozin2008polynomial,BRANDSTADT201857,DBLP:conf/soda/LokshantovVV14,DBLP:journals/talg/GrzesikKPP22}.
However, there is a belief in the community that the problem is actually tractable once we exclude any graph from $\cS$.
This belief is supported by the existence of quasipolynomial-time algorithms.
Indeed, such algorithms were first found for the case that $H$ is a linear forest~\cite{DBLP:conf/focs/GartlandL20,pilipczuk2021quasi}, and later extended to any $H \in \cS$~\cite{DBLP:conf/stoc/GartlandLMPPR24}.

Note that the existence of a quasipolynomial-time algorithm for a given problem is a strong indication that this problem is not \NP-hard, as otherwise every problem in \NP could be solved in quasipolynomial time.

For \lcol{3}, the situation is somewhat similar, though the boundary line between easy and hard cases lies in a different place.
On the one hand, \lcol{3} in $H$-free graphs is \NP-hard unless $H$ is a linear forest~\cite{DBLP:journals/cpc/Emden-WeinertHK98,DBLP:journals/siamcomp/Holyer81a,DBLP:journals/jal/LevenG83}.
On the other hand, polynomial-time algorithms are known only for restricted cases~\cite{DBLP:journals/combinatorica/BonomoCMSSZ18,DBLP:journals/algorithmica/ChudnovskyHSZ21,DBLP:journals/algorithmica/KlimosovaMMNPS20}, but the problem can be solved in quasipolynomial time in $H$-free graphs, for every linear forest $H$~\cite{pilipczuk2021quasi}.

Quite interestingly, the quasipolynomial-time algorithms for the~\MWIS and the \textsc{List 3-Coloring} in graphs excluding a fixed linear forest follow essentially the same approach.
Furthermore, an extension of this approach can be used to solve a certain family of wide generalizations of \MWIS in the graph classes considered here~\cite{DBLP:conf/stoc/GartlandLPPR21}.

For a fixed integer $r$ and fixed \cmsotwo formula\footnote{A \textsf{CMSO}$_2$ formula is a formula in the second-order logic of graphs with quantifiers over vertices and edges. For more details, see, e.g.,~\cite{cygan2015parameterized}.} $\psi$, by $(\tw \leq r,\psi)$-\MWIS we denote the problem where we are given a graph $G$ with vertex-weights, and we want to find a maximum-weight subset $X \subseteq V(G)$ such that the graph induced by $X$ has treewidth at most $r$ and satisfies the formula $\psi$, or report that no such set exists.

This formalism captures a wide range of problems, including \MWIS, \textsc{Feedback Vertex Set} (equivalently, \textsc{Max Induced Forest}), \textsc{Max Induced Matching}, \textsc{Even Cycle Transversal} (equivalently, \textsc{Max Induced Odd Cactus}), and many others.

It turns out that in many cases tractability results for \MWIS in graphs excluding a linear forest can be generalized to $(\tw \leq r,\psi)$-\MWIS, for any $r$ and $\psi$~\cite{DBLP:journals/siamcomp/AbrishamiCPRS24,DBLP:journals/talg/ChudnovskyMPPR26}.
In particular, every problem expressible with this formalism can be solved in quasipolynomial time in $H$-free graphs, for every linear forest $H$~\cite{DBLP:conf/stoc/GartlandLPPR21}.
We remark that such a generalization is not possible if $H$ is not a linear forest, in particular for $H \in \cS$. Indeed, already \textsc{Feedback Vertex Set} is \NP-hard in claw-free graphs, where the \emph{claw} is the three-leaf star $K_{1,3}$~\cite{DBLP:journals/dm/Munaro17}.

\paragraph{Forbidden induced minors.}
What makes classes excluding a linear forest so special?
It turns out that the answer to this question can be given in terms of \emph{induced minors}.
We say that $H$ is an induced minor of $G$ if it can be obtained from $G$ by removing vertices and contracting edges, and $G$ is \emph{$H$-induced-minor-free} if $H$ is not an induced minor of $G$.
Note that if $H$ is a linear forest, then the classes of $H$-free graphs and $H$-induced-minor-free graphs coincide, and this property holds only for linear forests.

Motivated by these observations, Gartland and Lokshtanov (see the doctoral thesis of Gartland~\cite{gartland2023quasi}) posed the following conjecture.

\begin{conjecture}[Gartland and Lokshtanov~\cite{gartland2023quasi}]\label[conjecture]{conj:indminors}
Let $H$ be a planar graph.
Then, for every fixed $r$ and \cmsotwo formula $\psi$, $(\tw \leq r,\psi)$-\MWIS and \lcol{3} can be solved in polynomial time in $H$-induced-minor-free graphs.
\end{conjecture}

We remark that the analogous statement cannot hold for non-planar graphs $H$, as \MWIS and 
\lcol{3} are both \NP-hard in planar graphs, which exclude every non-planar graph as an induced minor.
\cref{conj:indminors} received significant attention in the community, but it appears very challenging.
Obtaining even a quasipolynomial-time algorithm just for \MWIS, or at least a quasipolynomial-time approximation scheme (QPTAS), already seems to be a very ambitious goal. Such results are only known for very restricted cases~\cite{DBLP:conf/stoc/GartlandLMPPR24,DBLP:conf/swat/BonnetCMPR26,chudnovsky2025dominated,DBLP:journals/algorithmica/BonnetDGTW26,DBLP:journals/jctb/BonamyBDEGHTW24,DBLP:journals/jctb/NguyenSS24}. 

\paragraph{Decomposition-based approaches.}
A somewhat different route, complementary to forbidding induced subgraphs or minors, is to consider structural decompositions of graphs. Arguably, the best known example of such a decomposition is \emph{tree decomposition}. Intuitively, tree decomposition aims to embed a graph into a tree by mapping vertices of the graph to \emph{bags} of vertices of the tree, in a way that preserves adjacency and connectivity.
If all bags are small, say of constant size, i.e., the graph has bounded \emph{treewidth}, then the graph ``resembles a tree,'' and many problems can be solved efficiently by mimicking dynamic programming algorithms on trees.
A drawback of such an approach is that its applicability is somewhat limited, as only sparse graphs may have tree decompositions with all bags small. 

As a remedy, one can study \emph{structured tree decompositions}, where we allow bags to be large, but we require that they have some structure that allows us to solve problems efficiently.
For example, bounding the independence number of bags leads to the definition of the \emph{tree-independence number}, introduced independently by Yolov~\cite{yolov2018minor}  and Dallard, Milanič, and Štorgel~\cite{dallard2024treewidth}.
Classes of graphs with small tree-independence number have very strong structural~\cite{HMV25,DBLP:journals/jctb/DallardMS24a,DBLP:conf/esa/LimaMMORS24,chudnovsky2025treewidthcliqueboundednesspolylogarithmictreeindependence} and algorithmic~\cite{yolov2018minor,DBLP:conf/esa/LimaMMORS24,DBLP:journals/corr/abs-2601-15861} properties.
In particular, \lcol{3} (actually, \lcol{$k$} for any fixed $k$) and $(\tw \leq r,\psi)$-\MWIS can be solved in polynomial time in graphs with bounded tree-independence number~\cite{yolov2018minor,DBLP:conf/esa/LimaMMORS24,DBLP:journals/corr/abs-2601-15861}.
A limitation of tree-independence number is that this parameter may be large for some graphs with ``simple structure,'' like complete bipartite graphs (bicliques).

A further generalization of tree-independence number, that in particular captures bicliques, is the notion of \emph{induced matching treewidth}, first introduced by Yolov~\cite{yolov2018minor} under the name \emph{minor-matching hypertreewidth}. Here, for every bag of the decomposition, we aim to bound the size of a maximum induced matching each of whose edges intersects the bag (we emphasize that this matching is not required to be contained in the bag).
Yolov~\cite{yolov2018minor} showed that \MWIS, \lcol{$k$} for any fixed $k$, and some other problems can be solved in polynomial time in graphs of bounded induced matching treewidth.
Later, these results were extended to more problems, including, in particular, \textsc{Feedback Vertex Set},
by Lima, Milanič, Muršič, Okrasa, Rzążewski, and Štorgel~\cite{DBLP:conf/esa/LimaMMORS24}. They also conjectured that for every fixed $r$ and formula $\psi$,  $(\tw \leq r,\psi)$-\MWIS can be solved in polynomial time in graphs of bounded induced matching treewidth.
This conjecture was recently confirmed by Bodlaender, Fomin, and Korhonen~\cite{bodlaender2026finding}.

In the conclusion of their paper, Bodlaender, Fomin, and Korhonen~\cite{bodlaender2026finding} discussed several possible extensions of their result and, in particular, posed the following intriguing question about possible further generalizations of tree-independence number and induced matching treewidth.

\begin{question}[Bodlaender, Fomin, and Korhonen~\cite{bodlaender2026finding}]\label{q:bfk}
    Is \MWIS polynomial-time solvable if the instance is given along with a tree decomposition where every bag is intersected by a bounded number of pairwise disjoint and anticomplete $P_3$s, or cycles?
\end{question}

\subsection{Our contribution}
We introduce a framework that aims to generalize both routes discussed above: forbidding induced subgraphs and using tree decompositions with structure imposed on bags, and in particular to answer \cref{q:bfk}.

\paragraph{Induced $\cH$-packing treewidth.}
Let $\cH$ be a family of graphs. An \emph{induced $\cH$-packing} in a graph $G$ is a collection of pairwise anticomplete induced subgraphs of $G$, each isomorphic to a graph in $\cH$.
For a tree decomposition of $G$, its \emph{induced $\cH$-packing number} is the maximum size of an induced $\cH$-packing whose every element intersects a common bag of the decomposition.
Then, \emph{induced $\cH$-packing treewidth} of $G$, denoted $\treepi_\cH(G)$, is the minimum induced $\cH$-packing number over all tree decompositions of $G$.
If $\cH$ is a singleton, say, $\cH = \{H\}$, we abbreviate the notation and simply write \emph{induced $H$-packing} and $\treepi_H(G)$.

Using the terminology just introduced, tree-independence number is exactly $\treepi_{P_1}$ and induced matching treewidth is exactly $\treepi_{P_2}$.
Furthermore, \cref{q:bfk} concerns graphs of bounded $\treepi_{P_3}$ or $\treepi_{\cC}$, where $\cC$ is the family of all cycles.

On the other hand, the notion of bounded induced $\cH$-packing treewidth can be seen as a ``local relaxation'' of forbidding certain induced substructures.
Consider for example $\cH = \{P_3\}$, as suggested in \cref{q:bfk}.
Graphs $G$ with $\treepi_{P_3} \leq k-1$ may have many pairwise anticomplete $P_3$s,
but no $k$ of them intersect a single bag of some tree decomposition of $G$.
Thus, these graphs are, in some sense, ``locally $kP_3$-free,'' and they generalize $kP_3$-free graphs~\cite{DBLP:conf/esa/GalbyLMN25,DBLP:journals/siamdm/PaesaniPR22}.
Similarly, graphs $G$ with $\treepi_{\cC}(G) \leq k-1$ are a local relaxation of $kC_3$-induced-minor-free graphs~\cite{DBLP:journals/jctb/BonamyBDEGHTW24,DBLP:journals/jctb/NguyenSS24}.

\paragraph{Algorithmic results.}
We address \cref{q:bfk} and more generally, we study the algorithmic consequences of bounded induced packing treewidth for various choices of $\cH$.
While we are not able to obtain polynomial-time algorithms, our framework allows us to solve several problems in quasipolynomial time in graphs of bounded $\treepi_\cH$, for various choices of $\cH$.

For \MWIS we can even go beyond the case of $\treepi_{P_3}$, by showing the following result.

\begin{restatable}{theorem}{thmmwispfour}\label{thm:mwisp4}
Let $\cH$ be a family of graphs containing a path with at most 4 vertices.
For every fixed $k$, given a vertex-weighted graph $(G,\wei)$,
in time $n^{\Oh(\log^2 n)}$ one can either solve \MWIS in $(G,\wei)$,
or correctly report that $\treepi_{\cH}(G)>k$.
\end{restatable}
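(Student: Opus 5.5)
We follow the Gartland--Lokshtanov / Pilipczuk--Pilipczuk--Rzążewski branching scheme for $P_t$-free graphs. The scheme guesses vertices of a hypothetical optimum solution $I$ in order to destroy $P_4$s. The key difference is that we only need to destroy induced $P_4$s that are ``local'' to one bag of a suitable decomposition. The recursion tracks a measure that decreases by a constant factor every $\Oh(\log n)$ levels, and each level branches into $n^{\Oh(1)}$ options. This gives depth $\Oh(\log^2 n)$ in a polynomially branching tree, hence the running time $n^{\Oh(\log^2 n)}$.

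First, a reduction. It suffices to handle $\cH=\{P_t\}$ with $t\le 4$, since a $P_{t'}$-packing with $t'\le t$ is obtained from a $P_t$-packing by taking subpaths. Moreover $\treepi_{P_4}(G)\le \treepi_{\cH}(G)$ whenever $P_t\in\cH$ with $t\le 4$. Two cases are easier. When $t\le 2$, known polynomial algorithms apply: Yolov handles induced matching treewidth, and tree-independence number is covered in the same way. We reduce the case $t=3$ to $t=4$ by monotonicity. So assume $\treepi_{P_4}(G)\le k$.

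Second, balanced separators. Take an (unknown) tree decomposition $\cT$ of $G$ with induced $P_4$-packing number at most $k$. For any weight function on $V(G)$, a standard argument gives a bag $\beta$ that is a balanced separator. We use two facts about $\beta$. The maximal anticomplete family of $P_4$s meeting $\beta$ has at most $k$ members. Their union $Q$ has at most $4k$ vertices, and every induced $P_4$ meeting $\beta$ touches $N[Q]$. Hence $\beta\setminus N[Q]$ induces, locally, a $P_4$-free part (a cograph with respect to paths through $\beta$).

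The algorithm then proceeds in the usual way. At a branching step we guess $Q$, which costs $n^{\Oh(k)}$ choices. We also guess $Q\cap I$, and for the vertices of $N(Q)$ we use Gy\'arf\'as-path-style heavy-vertex branching. Concretely, we use the container lemmas of \cref{sec:containers}, whose statements we take as given. They cover a bag by a small number of neighbourhoods of vertices, so branching on whether a ``heavy'' vertex lies in $I$ either deletes its neighbourhood or deletes the vertex. We measure progress by the number of pairs $(u,v)$, or of induced $P_4$s, that stay connected. Branching on vertices that hit a $1/\mathrm{polylog}$ fraction of the relevant objects reduces the measure geometrically, so the depth is $\Oh(\log n)$ per phase. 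After this, the remaining part of the separator, restricted to the solution, is dominated by $\Oh(k)$ vertices. It therefore becomes a balanced separator of bounded size, and we recurse on components, which adds another $\Oh(\log n)$ factor.

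The main obstacle is the second step. We must show that, after guessing $Q$ and removing the guessed neighbourhoods, the rest of $\beta$ can be made to interact with $I$ in a controlled, dominated way without knowing $\cT$. The dominated balanced separator results of \cref{sec:dbs} are meant for exactly this. I would first try to prove that every graph with $\treepi_{P_4}\le k$ has a balanced separator dominated by $\Oh(k)$ vertices together with a cograph-like set. I would then handle the cograph-like remainder using the fact that \MWIS on $P_4$-free pieces is polynomial. If $G$ fails to have such a separator at some step, the algorithm reports $\treepi_{\cH}(G)>k$.
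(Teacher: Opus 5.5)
Your reduction to $\cH=\{P_4\}$ is fine, and applying the container lemma (\cref{thm:H-clean-containers-is}) to a central bag is the right first move. However, the step you yourself flag as the main obstacle is a genuine gap, and the route you sketch for it does not work.

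After the container lemma, $G[F]$ is only $(B\cap F,P_4)$-clean. No induced $P_4$ meets $B$, yet $G[F]-B$ may contain arbitrarily many induced $P_4$s. Moreover, the component meeting $B$ may be almost everything: take a universal vertex $b$ added to many disjoint copies of $P_4$, with $B=\{b\}$. So there is no cograph remainder on which the polynomial algorithm for $P_4$-free graphs could run.

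Your fallback also fails. You take a balanced separator $N[X]$ with $|X|=\Oh(k)$ and say it then becomes a balanced separator of bounded size; this conflates $|X|$ with $|N[X]|$. If $x\in X$ is not in the solution, $N(x)$ survives and may contain many solution vertices, so $I\cap N[X]$ cannot be enumerated. Separators of this kind exist for every fixed path (\cref{lem:treepi-Pt-dominated-separator}), and the paper extracts only a QPTAS and subexponential algorithms from them. An exact quasipolynomial algorithm is exactly what they are not known to give.

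Finally, the container lemma needs an explicit bag $B$ as input, and guessing $Q$ does not provide one. The paper computes a decomposition with packing number at most $8k$ by \cref{thm:computedecomposition}, and takes a central bag via \cref{lem:balanced-bag}. This computation is also the source of the report $\treepi_{\cH}(G)>k$.

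What is missing is a way to solve \MWIS{} on a $(B,P_4)$-clean graph, given an oracle for induced subgraphs of $G-B$. The paper does this with modular decomposition. By \cref{lem:modular-mwis}, it suffices to solve prime induced subgraphs $G'$ under arbitrary weights. By the Hayward--Hougardy--Reed dichotomy (\cref{lem:HHR}), a prime graph falls into one of two cases:
\begin{itemize}
\item it is split, and \MWIS{} is solved by guessing the at most one vertex taken from the clique; or
\item every vertex lies on an induced $P_4$. Cleanness then forces $V(G')\cap B=\emptyset$, so every component of $G'$ lies inside a component of $G-B$, has at most $n/2$ vertices, and is solved recursively.
\end{itemize}
This gives $F(n)\le n^{\Oh(\log n)}\,F(n/2)=n^{\Oh(\log^2 n)}$. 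One logarithm comes from the container branching and one from the halving, so no second branching phase on a dominated separator is needed.
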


For \lcol{3}, we can show an analogous result in an even wider class of graphs.

\begin{restatable}{theorem}{thmcolpfive}\label{thm:colpfive}
Let $\cH$ be a family of graphs containing a path with at most 5 vertices.
For every fixed $k$, given an instance $(G,L)$ of \lcol{3},
in time $n^{\Oh(\log^2 n)}$ one can either solve $(G,L)$,
or correctly report that $\treepi_{\cH}(G)>k$.
\end{restatable}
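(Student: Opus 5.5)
Since $\treepi_{\cH}(G) \le \treepi_{P_t}(G)$ whenever $P_t \in \cH$, and an induced $P_t$-packing with $t \le 5$ yields an induced $P_5$-packing after extending paths (not always possible), I would instead reduce directly: first observe that an induced $P_s$-packing for $s\le 5$ can be obtained from any induced $P_5$-packing by taking subpaths that still meet the bag. Hence $\treepi_{P_5}(G)\le\treepi_{P_s}(G)\le \treepi_{\cH}(G)$ for the path $P_s\in\cH$. So it suffices to handle $\cH=\{P_5\}$, i.e., graphs with a tree decomposition in which no $k+1$ pairwise anticomplete induced $P_5$'s all meet one bag. There is a subtlety: a $P_5$ meeting a bag gives a $P_s$ meeting the bag only if we choose the subpath containing the intersected vertex, which is always possible since any vertex of $P_5$ lies in a subpath on $s$ consecutive vertices.

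The algorithm follows the Gartland--Lokshtanov / Pilipczuk--Pilipczuk--Rzążewski branching framework for $P_t$-free graphs. We never compute a decomposition. Instead, we argue that whenever the instance is ``large'', some vertex $v$ together with its neighbourhood is relevant for many induced $P_5$'s (or, more precisely, for many ``bad'' 3-vertex/edge configurations whose lists are still of size 3). Branching on the colour of $v$ then shrinks the lists in $N(v)$, so the measure (the number of such bad configurations) drops by a $1/\mathrm{polylog}$ fraction in one branch, while in the other branch the instance size drops. The key structural input is a balanced-separator statement. Bounded $\treepi_{P_5}$ implies that for any weight function (here, the configuration count), there is a bag $\beta$ that is a balanced separator. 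Moreover, $\beta$ is ``dominated'' in the following sense: take a maximal induced $P_5$-packing meeting $\beta$, which has size at most $k$, together with its closed neighbourhood $D$. Every vertex of $\beta$ then either lies in $N[D]$ or has a $P_5$-free neighbourhood locally. This is where the container lemmas of Section~\ref{sec:containers} enter. I would use them to show that $\beta\setminus N[D]$ together with its surroundings induces a $P_5$-free-like structure relative to the separator, so a bounded number of guesses (vertices of the $O(k)$ paths, $n^{O(k)}$ choices) plus the $P_5$-free-style branching destroys the separator in quasipolynomially many steps.

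Concretely, the steps are as follows. (1) Reduce to $\cH=\{P_5\}$ as above. (2) Define the potential as the number of ``3-list edges'' or bad pairs, as in the $P_t$-free \lcol{3} algorithm. (3) Prove a lemma: if $\treepi_{P_5}(G)\le k$, then there is a set $X$ of $O(k)$ vertices such that branching on colours of $X$ and then applying the heavy-vertex rule yields either a balanced split of the potential into components or a drop of the potential by a constant fraction. If no such $X$ exists among all $n^{O(k)}$ candidates, we report $\treepi_{P_5}(G)>k$. (4) Solve the recurrence. With $O(\log n)$ levels of balanced splitting and $\mathrm{polylog}\, n$ heavy-vertex branchings per level, each costing $n^{O(k)}$ guesses, the total running time is $n^{O(\log^2 n)}$.

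The main obstacle is step (3). In $P_5$-free graphs, Gyárfás-path arguments guarantee that a heavy vertex's neighbourhood dominates a separator. Here, $P_5$'s are only excluded locally, so an induced path in the Gyárfás argument of length $5$ need not be killed. I would try to use the $k$ anticomplete $P_5$'s hitting the separator bag: extend the Gyárfás path until it contains $P_5$'s anticomplete to the guessed ones, obtaining $k+1$ anticomplete $P_5$'s meeting the bag, which is a contradiction. Making the anticompleteness and bag-intersection hold simultaneously is the delicate part.
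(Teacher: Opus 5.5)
Your reduction to $\cH=\{P_5\}$ is correct: taking a subpath that still meets the bag is exactly \cref{lem:structural-package}~\eqref{sprop:4} and~\eqref{sprop:7}. Your opening inequality $\treepi_{\cH}\le\treepi_{P_t}$ is backwards, but the chain you write afterwards is the right one.

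The genuine gap is step (3). It carries all of the content and is left unproved, and the obstacle you name there is not the real one. Finding $O(k)$ vertices whose closed neighbourhood is a balanced separator is easy. Let $Y$ be the vertex set of a maximum $B$-rooted induced $P_5$-packing. Every $B$-rooted $P_5$ in $G-N[Y]$ is automatically anticomplete to $Y$, so a Gy\'arf\'as path from a $B$-vertex of the heavy component of $G-N[Y]$ has at most four vertices. This is exactly \cref{lem:treepi-Pt-dominated-separator}.

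However, such a dominated separator does not give a quasipolynomial algorithm. After you fix colours on $X\cup Y$, the vertices of $N(X\cup Y)$ only get 2-element lists. They stay in the graph and still glue the small components together, so no ``balanced split into components'' happens. On their own, dominated balanced separators are only known to give $2^{\widetilde{\Oh}(\sqrt n)}$ time for \lcol{3}.

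The missing idea is a structural analysis that cuts off the region around the short Gy\'arf\'as path with polynomially many guesses; this is \cref{lem:p5clean}. Take a connected $(B,P_5)$-clean instance, the path $P=x_1\ldots x_t$ with $t\le 4$ and $x_1\in B$, and $R=V(G)\setminus N[P]$. Then:
\begin{enumerate}
\item $N_4=\emptyset$;
\item $N_3$ is anticomplete to $R$;
\item every vertex of $N_2$ is complete or anticomplete to each component of $G[R]$;
\item in a fixed colouring, each colour class of $N_1$ has a subset $S^i$ of at most two vertices that dominates all of that class's neighbours in $R$, since three such vertices would create a $B$-rooted $P_5$ through $x_1$.
\end{enumerate}
Guessing the colouring of $P$ and the sets $S^i$ gives $\Oh(n^4)$ branches. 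Propagating the lists then deletes all $N_1$--$R$ edges. Components inside $N$ become 2-\textsc{Sat} instances. For components meeting both $N$ and $R$, each component of $R$ sees only $N_2$, where every list is $\{1,2\}$. So it can be contracted to a vertex whose list has size at most $2$, again giving 2-\textsc{Sat}. Only components inside $R$, each of size at most $n/2$, are left for recursion.

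Separately, your plan to ``never compute a decomposition'' conflicts with using the container lemma. Its branching measure is the family of $B$-rooted packings for an explicitly given bag $B$, and $B$ cannot be guessed. The paper proceeds as follows:
\begin{enumerate}
\item compute an $8k$-approximate decomposition via \cref{thm:computedecomposition} and take its central bag $B$;
\item apply \cref{thm:H-clean-containers-coloring} to obtain $(B,P_5)$-clean instances;
\item recurse on components that avoid $B$;
\item pass components that meet $B$ through \cref{lem:p5clean}.
\end{enumerate}
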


Next, we show a number of algorithmic results in the case that $P_3 \in \cH$.

In the \textsc{Odd Cycle Transversal} (\OCT) problem we are given a vertex-weighted graph $(G,\wei)$ and the goal is to find a minimum-weight set of vertices whose removal makes the remaining graph bipartite.
This problem was only recently shown to be polynomial-time solvable in $kP_3$-free graphs, for any fixed $k$~\cite{DBLP:conf/esa/GalbyLMN25}.
We show the following result concerning ``locally $kP_3$-free'' graphs.

\begin{restatable}{theorem}{thmoctpthree}\label{thm:octpthree}
Let $\cH$ be a family of graphs containing a path with at most 3 vertices.
For every fixed $k$, given a vertex-weighted graph $(G,\wei)$,
in time $n^{\Oh(\log^2 n)}$ one can either solve \OCT in $(G,\wei)$,
or correctly report that $\treepi_{\cH}(G)>k$.
\end{restatable}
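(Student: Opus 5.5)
Since $P_t$ for $t\le 3$ is an induced subgraph of $P_3$, every induced $\cH$-packing meeting a bag can be reduced to the path it contains, so $\treepi_{\cH}(G)\le k$ implies $\treepi_{P_3}(G)\le k$ (after noting that an induced $P_1$- or $P_2$-packing gives a $P_3$-free-ish bound only via monotonicity; the simplest route is to take $\cH'=\{P_t\}$ with $t\le 3$ the path in $\cH$ and observe $\treepi_{\{P_t\}}\le \treepi_\cH$). I will therefore assume $\treepi_{P_t}(G)\le k$, and, since $\treepi_{P_3}\le \treepi_{P_t}$ up to constant factors for $t\le 3$ (every $P_3$ contains $P_1,P_2$), work with bounded $\treepi_{P_3}$.

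The plan is the standard quasipolynomial branching scheme of Gartland--Lokshtanov / Pilipczuk et al., adapted to \OCT via its formulation as $(\tw\le 2,\psi)$-\MWIS-like problem ``find a maximum-weight induced bipartite subgraph''. Step one: show that in a graph with $\treepi_{P_3}\le k$, every weight/measure function admits a \emph{balanced separator dominated by a small number of vertices' neighborhoods}, namely: there is a bag $B$ of the decomposition that is a balanced separator, and a maximal induced $P_3$-packing meeting $B$ has at most $k$ members; its union $Z$ (at most $3k$ vertices) has the property that every vertex of $B\setminus N[Z]$ lies in a component of $G-N[Z]$ that is $P_3$-free, i.e.\ a clique. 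So $B\subseteq N[Z]\cup Q$ where $Q$ is a union of cliques anticomplete to each other. Step two: the algorithm guesses $Z$ ($n^{O(k)}$ options), and for the solution bipartite subgraph $X$, guesses the intersection $X\cap Z$ and a 2-coloring; crucially, in a bipartite induced subgraph every clique contributes at most 2 vertices, and the neighborhood of each guessed vertex $z\in X$ restricted to $X$ must avoid its color class. The measure we decrease is the number of induced $P_3$'s (or pairs) still ``alive'', as in the $P_t$-free QPTAS/quasipoly analysis: branching on a heavy vertex $v$ (one whose neighborhood hits a $1/\mathrm{polylog}$ fraction of the relevant structures) either deletes $v$ or fixes its color and removes its same-color neighbors, which is a trimming that shrinks the measure.

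Step three is the recursion: after $O(\log n)$ rounds of heavy-vertex branching (each with $O(1)$ branches and depth $O(\log^2 n)$ overall, giving $n^{O(\log^2 n)}$), the guessed structure ensures that vertices of $N[Z]$ in the solution have been colored, and their other neighbors in the solution are forced; the remaining separator part $Q$ consists of anticomplete cliques each meeting $X$ in $\le 2$ vertices, but there may be many cliques — this is handled by noting they are anticomplete, so they can be absorbed into components: once $N[Z]\cap B$ is resolved, the graph splits into balanced pieces plus the clique pieces, and we recurse on connected components, each of size at most $2n/3$ in measure. Recursion depth $O(\log n)$ and branching $n^{O(\log n)}$ per level give the bound. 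If at any point the guessed balanced-separator structure does not exist, we report $\treepi_{\cH}(G)>k$.

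The main obstacle I expect is step three: for \OCT, unlike \MWIS, a vertex in $N(Z)$ that is not deleted is not simply eliminated; fixing colors of $Z$ only forces colors of neighbors, and residual vertices of $N(Z)$ still connect pieces. I would handle this by propagating colors (a colored vertex's neighbors receive the opposite color, which becomes a list-2-coloring constraint on a bipartite-checkable structure), so that after guessing, all of $N[Z]\cap X$ have forced colors and can be treated as precolored, reducing connectivity through them to parity constraints resolvable in the component recursion.
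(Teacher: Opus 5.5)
\textbf{There is a genuine gap, and it sits in Step three, where you suspected.}

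Your reduction to $\cH=\{P_3\}$ is fine; by \cref{lem:structural-package} it is exact, not up to constants. Step one is also correct. But all Step one provides is an $\Oh(k)$-dominated balanced separator (up to clique components), essentially \cref{thm:dbs-together} for $t=3$.

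Guessing $Z$ and the colours on $X\cap Z$ does not resolve the separator vertices in $N(Z)$, in particular those in $B$:
\begin{itemize}
\item a neighbour of some $z\in Z\setminus X$ is completely unconstrained;
\item a neighbour of a coloured $z$ only loses one colour, and you must still decide whether it is in $X$.
\end{itemize}
A vertex $u\in N(Z)\cap B$ can be adjacent to many components of $G-B$, so the binary choice ``is $u\in X$?'' couples all of them. This is not a parity constraint that vanishes in a component-wise recursion. Since $|N(Z)\cap B|$ is unbounded, guessing these choices costs $2^{|N(Z)\cap B|}$. This is the same obstacle that limits the consequences of dominated balanced separators in the paper to a QPTAS and subexponential algorithms.

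Your heavy-vertex branching in Step two does not repair this. You never fix the measure: fixing a colour of $v$ removes its neighbours only from one colour class, not from the graph. You also never say what property holds at the leaves.

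\textbf{The missing idea is to make each colour class, rather than $G$, clean around $B$, and then exploit the resulting simpliciality.} The paper proceeds as follows.
\begin{itemize}
\item It computes a decomposition via \cref{thm:computedecomposition} and takes a central bag $B$.
\item It applies the container lemma \cref{thm:H-clean-containers-is} to $X_1$ and $X_2$ separately, since both are independent sets, ranging over all $n^{\Oh(\log n)}$ pairs of containers $(I_1,F_1),(I_2,F_2)$.
\item Working in the list/revenue generalisation, the layer $V_i$ of vertices still allowed in $X_i$ is then $(B,P_3)$-clean. By \cref{lem:p3clean}, every vertex of $B$ is simplicial within each layer.
\end{itemize}
This simpliciality enables three further steps:
\begin{itemize}
\item[(a)] A safe reduction for $v\in B$ with list $\{i\}$: delete $i$ from $v$ and from its same-layer neighbours of smaller revenue, and subtract $\rev(v,i)$ from the remaining ones. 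This is valid because that neighbourhood is a clique, and it is the correct replacement for your ``precoloured'' separator vertices.
\item[(b)] Deleting crossing edges between $V_1\setminus V_2$ and $V_2\setminus V_1$, and splitting into layer-connected components.
\item[(c)] A layer-connected component that still meets $B$ has both $V_1$ and $V_2$ inducing cliques, so it is solved by brute force.
\end{itemize}
Components avoiding $B$ have at most $n/2$ vertices and are handled recursively, which gives the bound $n^{\Oh(\log^2 n)}$.
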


Next, using the technique of \emph{blob graphs}~\cite{DBLP:conf/stoc/GartlandLPPR21}, we show that for any fixed $r$ and formula $\psi$, \emph{unweighted} $(\tw \leq r,\psi)$-\MWIS admits a QPTAS in graphs of bounded $\treepi_{P_3}$, provided that the property imposed by $\psi$ is \emph{hereditary}, i.e., closed under vertex deletion and disjoint unions.

\begin{restatable}{theorem}{thmapproxcmsopthree}\label{thm:approx-cmso-p3}
Let $\cH$ be a family of graphs containing a path with at most 3 vertices.
Let $k, r \geq 0$, $\varepsilon \in (0,1)$ be a real, and $\psi$ be a hereditary \textsf{CMSO}$_2$ formula.
Given a graph $G$, in quasipolynomial time one can compute one of the following outputs:
\begin{itemize}
\item a solution to $(\tw \leq r,\psi)$-\MWIS of size at least $(1-\varepsilon)$ times the optimum;
\item a correct conclusion that no solution to $(\tw \leq r,\psi)$-\MWIS exists; or
\item a correct conclusion that $\treepi_{\cH}(G) > k$.
\end{itemize}
\end{restatable}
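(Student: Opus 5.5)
Since $\treepi_{\cH}(G) \le \treepi_{P_t}(G)$ whenever $P_t\in\cH$ with $t\le 3$ (a smaller path is contained in $P_3$, so any induced $P_3$-packing hitting a bag yields an equally large induced $P_t$-packing hitting that bag), it is enough to treat $\cH=\{P_3\}$. I would follow the blob-graph framework of Gartland, Lokshtanov, Pilipczuk, Pilipczuk and Rzążewski for $(\tw\le r,\psi)$-\MWIS. The plan has three steps. First, reduce the problem to approximate \MWIS in an auxiliary ``blob'' graph. Second, show that this blob graph again has bounded induced $P_3$-packing treewidth, or at least admits the balanced-separator structure used by our \MWIS approximation. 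Third, apply the QPTAS for \MWIS, which I expect to be obtained in the section on dominated balanced separators for any $\cH$ containing a path.

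For the reduction, recall the standard fact used for hereditary $\psi$ and treewidth at most $r$. An optimal solution $X$ induces a graph of treewidth at most $r$. Such a graph can be split into connected pieces of size $\mathrm{poly}(r/\varepsilon)$ while deleting only an $\varepsilon$-fraction of $X$. Concretely, we repeatedly use balanced separators of size $r+1$ and drop the separator vertices; a counting argument bounds the total loss. Heredity guarantees that the surviving part still satisfies $\psi$. The pieces are pairwise anticomplete connected sets of bounded size $s=s(r,\varepsilon)$. Define the blob graph $G^{(s)}$ with one vertex for every connected set of at most $s$ vertices of $G$ that induces a graph satisfying $\psi$ with treewidth at most $r$. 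Give each such vertex weight equal to its size, and make two blobs adjacent if they intersect or touch. This graph has $n^{\Oh(s)}$ vertices. Independent sets in $G^{(s)}$ correspond to feasible solutions, because a disjoint union of anticomplete valid pieces is valid (heredity includes closure under disjoint unions). Hence a $(1-\varepsilon)$-approximate \MWIS in $G^{(s)}$ gives a $(1-\Oh(\varepsilon))$-approximation for the original problem. Infeasibility is detected directly: if no nonempty blob exists, then we decide whether the empty set is a solution.

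The key step is to transfer the bounded parameter to $G^{(s)}$. Given a tree decomposition of $G$, I would define the bag of a node $t$ in the blob graph as the set of blobs meeting the bag $B_t$ of $G$. Each blob is connected, so the set of nodes whose bag it meets forms a subtree. Touching blobs share or neighbour vertices of $G$, so they meet a common bag. Thus this is a valid tree decomposition of $G^{(s)}$.

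Now take an induced $P_3$-packing in $G^{(s)}$ all of whose members meet the bag at $t$. Each member $A-B-C$ is a path of blobs. The union $A\cup B\cup C$ is a connected set in $G$ that meets $B_t$. Different members are anticomplete in $G^{(s)}$, so their unions are anticomplete in $G$. The obstacle is that $A\cup B\cup C$ might not contain an induced $P_3$ of $G$ meeting $B_t$: it might be a clique. This is exactly where $P_3$ is special. A connected graph with no induced $P_3$ is complete, so either each union contains an induced $P_3$ (which is then within distance $\Oh(s)$ of the bag), or it is a clique. I expect this to be the main obstacle. I would handle it by bounding the packing number with respect to the enlarged bags $N^{\Oh(s)}[B_t]$, or by replacing the target statement with the existence of balanced separators dominated by few $P_3$-neighbourhoods and cliques in $G^{(s)}$. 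If the \MWIS QPTAS only needs separators dominated by a bounded number of closed neighbourhoods of bounded-size connected sets, then cliques pose no problem, since a clique is dominated by a single vertex.

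With that in hand, the \MWIS QPTAS on $G^{(s)}$ runs in time $|V(G^{(s)})|^{\mathrm{polylog}}=n^{\mathrm{polylog}\, n}$, since $s$, $k$, $r$ and $\varepsilon$ are fixed. If the separator-finding routine fails, this certifies $\treepi_{P_3}(G)>k$, and hence $\treepi_{\cH}(G)>k$.
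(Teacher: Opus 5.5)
\textbf{There is a genuine gap at the key step.} Your reduction to approximate \MWIS on the bounded-size blob graph $G^{(s)}$ is in line with the paper, as is your tree decomposition of $G^{(s)}$ in which each bag consists of the blobs meeting $B_t$. These are what \cref{thm:alg-blobclosed} and the proof of \cref{lem:P3-blob-preservation} do. However, the decisive step, bounding the induced $P_3$-packing number of this decomposition, is left open, and the obstacle you single out does not exist.

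If $A-B-C$ is an induced $P_3$ of $G^{(s)}$, then $A$ and $C$ are non-adjacent blobs. By definition of the blob graph this means $A\cap C=\emptyset$ and no edge of $G$ joins $A$ and $C$. So $G[A\cup B\cup C]$ is connected and \emph{not} complete; it can never be a clique. By \cref{lem:p3clean} applied with root set $\{v\}$, every vertex $v$ of a connected non-complete graph lies on an induced $P_3$ of that graph. Picking $v\in(A\cup B\cup C)\cap B_t$ therefore yields an induced $P_3$ of $G$ inside $A\cup B\cup C$ that meets $B_t$; no distance-$\Oh(s)$ slack is needed. Distinct members of the packing are anticomplete in $G^{(s)}$, so their unions are disjoint and anticomplete in $G$. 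This gives a $B_t$-rooted induced $P_3$-packing in $G$ of the same size, hence $\treepi_{P_3}(G^{(s)})\le\treepi_{P_3}(G)$, which is \cref{lem:P3-blob-preservation}. The paper then feeds the hereditary, blob-closed class $\{G:\treepi_{P_3}(G)\le k\}$ into \cref{thm:alg-blobclosed}, using the exact quasipolynomial \MWIS algorithm of \cref{thm:mwisp4}. The separator-based QPTAS you mention would also serve once this transfer is available.

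\textbf{Why the proposal does not close as written.} Without this observation, the workarounds you sketch are not viable. Packing numbers of enlarged bags are not controlled by $k$. For example, take the tree consisting of a vertex $v$ with $m$ pendant paths $v-u_i-a_i-b_i$. It has $\treepi_{P_3}\le 2$, yet $N[v]$ meets the $m$ pairwise anticomplete induced paths $u_i-a_i-b_i$. The alternative via separators dominated by neighbourhoods of $P_3$s and cliques would need a separator theorem for $G^{(s)}$ that you do not prove. Likewise, your final claim that a failure of the routine on $G^{(s)}$ certifies $\treepi_{P_3}(G)>k$ is only justified once the transfer inequality is established.

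\textbf{A small slip.} Your first sentence should read $\treepi_{P_3}(G)\le\treepi_{P_t}(G)\le\treepi_{\cH}(G)$, which is what your parenthetical argument plus monotonicity in $\cH$ gives. The inequality $\treepi_{\cH}(G)\le\treepi_{P_t}(G)$ as stated goes the wrong way.
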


\paragraph{Dominated balanced separators.}
Now, let us move to a somewhat more general setting in which $\cH$ contains a fixed path or all cycles, as in \cref{q:bfk}. We prove that in both cases, every graph of bounded $\treepi_\cH$ has a small set $X$ of vertices,
such that $N[X]$, i.e., the set consisting of $X$ and its neighbors, is a balanced separator.

\begin{theorem}\label{thm:dbs-together}
    Let $\cH$ be a family of graphs.
    Let $G$ be an $n$-vertex graph with $\treepi_{\cH}(G) \leq k$.
    There is a set $X \subseteq V(G)$ such that every component of $G-N[X]$ has at most $n/2$ vertices, where
    \begin{enumerate}
        \item $|X| = \Oh(kt)$ if $\cH$ contains a path on $t$ vertices, and
        \item $|X| = \Oh(k^3 \log n)$ if $\cH$ contains all cycles.
    \end{enumerate}    
\end{theorem}

Separators dominated by few vertices are believed to exist in $H$-induced-minor-free graphs, for every planar graph $H$ -- this is another conjecture of Gartland and Lokshtanov~\cite{gartland2023quasi}, being a ``structural'' counterpart of \cref{conj:indminors}.
Interestingly, the existence of such separators, combined with some known results~\cite{chudnovsky2025dominated,DBLP:journals/dam/GroenlandORSSS19,chudnovsky2024quasi,DBLP:journals/algorithmica/NovotnaOPRLW21,DBLP:journals/algorithmica/BacsoLMPTL19}, yields several algorithmic consequences (in the $\widetilde{\Oh}(\cdot)$-notation we suppress polylogarithmic factors).

\begin{corollary}
    Let $\cH$ be a family of graphs containing a fixed path or all cycles. For every fixed $k$, the following algorithmic results hold for $n$-vertex graphs $G$ with $\treepi_{\cH}(G) \le k$.
    \begin{enumerate}
        \item \MWIS admits a QPTAS.
        \item \MWIS and \lcol{3} can be solved in time $2^{\widetilde{\Oh}(\sqrt{n})}$.
        \item \textsc{Feedback Vertex Set} (equivalently, \textsc{Maximum Induced Forest}), \textsc{Max Induced Matching}, and \textsc{Max Induced Planar Subgraph} can be solved in time $2^{\widetilde{\Oh}(n^{2/3})}$.
    \end{enumerate}
\end{corollary}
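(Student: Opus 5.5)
The corollary is assembled from \cref{thm:dbs-together} and known algorithmic consequences of dominated balanced separators. The plan has two layers: first obtain a hereditary statement, then feed it into the cited black-box results item by item.

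\textbf{Heredity.} Induced $\cH$-packing treewidth does not increase under taking induced subgraphs. Given a tree decomposition of $G$, restrict every bag to $V(G')$. Any induced $\cH$-packing in $G'$ whose members meet a common restricted bag is also an induced $\cH$-packing in $G$ meeting the original bag, since anticompleteness and inducedness are preserved. Hence every induced subgraph $G'$ of $G$ satisfies $\treepi_\cH(G')\le k$. Applying \cref{thm:dbs-together} to $G'$ gives a set $X\subseteq V(G')$ with $|X|=\Oh(kt)$ or $|X|=\Oh(k^3\log |V(G')|)$ such that $N_{G'}[X]$ is a balanced separator of $G'$. For fixed $k$ (and fixed $t$) this is $\Oh(\log n)$ in both cases. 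The class is therefore hereditary and has balanced separators dominated by $\Oh(\log n)$ vertices, which is exactly the hypothesis of the cited frameworks. A weighted version of the separator (balanced with respect to vertex weights) is handled the standard way, either by the cited works or by blowing up vertices into cliques/independent copies, which preserves $\treepi_\cH$ up to small changes. I would check which form each citation requires.

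\textbf{Item (1).} The result of Chudnovsky et al.~\cite{chudnovsky2025dominated} gives a QPTAS for \MWIS in hereditary classes where every graph has a balanced separator dominated by polylogarithmically many vertices, so it applies directly. The argument behind it is as follows. Guess the $\Oh(\log n)$ dominating vertices $X$, giving $n^{\Oh(\log n)}$ choices. The optimum solution $I$ loses little if we delete $N(x)$ for those $x$ whose neighbourhood carries little weight of $I$. Otherwise we branch on putting $x$ into the solution, which eliminates $N[x]$. A potential argument bounds the recursion depth polylogarithmically.

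\textbf{Items (2) and (3).} These use the subexponential framework of~\cite{DBLP:journals/algorithmica/BacsoLMPTL19,DBLP:journals/algorithmica/NovotnaOPRLW21}. A dominated separator $N[X]$ with $|X|=\Oh(\log n)$ handles two kinds of vertices differently:
\begin{itemize}
\item vertices of degree at least $\sqrt n$ are branched on directly (for \MWIS and 3-coloring, in or out of the solution, or by colour);
\item once all degrees are below $\sqrt n$, $N[X]$ has size $\widetilde{\Oh}(\sqrt n)$, so we branch over its intersection with the solution and recurse on balanced pieces.
\end{itemize}
This gives $2^{\widetilde{\Oh}(\sqrt n)}$. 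For \lcol{3} the same scheme works, since fixing a colour of a vertex removes that colour from its neighbours' lists, following~\cite{DBLP:journals/algorithmica/NovotnaOPRLW21}. For the problems in item (3) the solution may contain many neighbours of a vertex, so the degree threshold must be balanced differently. This is the source of the $n^{2/3}$ exponent, as in~\cite{DBLP:journals/dam/GroenlandORSSS19,chudnovsky2024quasi}.

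\textbf{Main obstacle.} The only genuine difficulty is verifying that each cited theorem needs only heredity together with $\mathrm{polylog}$-dominated balanced separators. In particular, I would check whether it needs a weighted version of the separator, and whether the separator must be computable in quasipolynomial time. Computability is easy here: we can enumerate all $n^{\Oh(\log n)}$ candidate sets $X$ and test each one.
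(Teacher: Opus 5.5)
Your route is the same as the paper's: heredity of $\treepi_{\cH}$ (\cref{lem:structural-package}~\eqref{sprop:3}), dominated balanced separators from \cref{thm:dbs-together}, and then the cited results as black boxes. The paper proves nothing more than this for the corollary.

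The one step that does not work as written is your reduction to \emph{weighted} separators by blowing up vertices. The paper's notion of $f$-dominated balanced separators (\cref{sec:dbs}) quantifies over all weight functions, and that is the form in which the cited consequences are used. Blow-ups do not keep you in the class.

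For example, let $G$ be $K_{k,k}$ with a pendant vertex attached to every vertex. Every induced cycle of $G$ lies in $K_{k,k}$, and any two of them touch, so $\treepi_{\cC}(G)\le 1$. Now double every vertex into two adjacent twins. For each vertex $v$ of $K_{k,k}$ this creates a triangle on the two copies of $v$ and a copy of its pendant vertex. The triangles coming from one side are pairwise anticomplete. Every tree decomposition has a bag containing a whole side of the blown-up $K_{k,k}$, so the blow-up has $\treepi_{\cC}\ge k$. Independent blow-ups fail already for $\cH=\{P_1\}$.

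The paper avoids this because the proofs of \cref{lem:treepi-Pt-dominated-separator,lem:treepi-Cge3-log-dominated-separator-weighted} work for an arbitrary weight function. Both \cref{lem:balanced-bag} and \cref{lem:gyarfas-path} are weighted. In the cycle case the paper takes the union of a weighted central bag and a uniform one; the uniform bag keeps each piece $C$ to at most $n/2$ vertices for the induction. The price is $2k$ instead of $k$.

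Alternatively, you can stay with the unweighted statement and use heredity. Reapply it to the unique heavy component of $G-N[X]$, which at least halves in size each round. After $\Oh(\log n)$ rounds you get a weighted balanced separator dominated by $\Oh(\log^2 n)$ vertices, which still suffices for all three items.

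Two smaller points.
\begin{itemize}
\item Your sketch of the QPTAS is inverted. If $N(x)$ carries much of the weight of the optimum $I$, then $x\notin I$, so the branch $x\in I$ gains nothing. Such heavy vertices have to be removed at no cost to $I$, for example by guessing a few vertices of $I$ adjacent to them; they should not be put into the solution.
\item The attributions are shuffled. The QPTAS is from \cite{chudnovsky2024quasi} and the bound for \lcol{3} is from \cite{DBLP:journals/dam/GroenlandORSSS19}. The bounds for \MWIS and item (3) come from \cite{DBLP:journals/algorithmica/BacsoLMPTL19,DBLP:journals/algorithmica/NovotnaOPRLW21}. In particular, the $n^{2/3}$ exponent comes from the framework of \cite{DBLP:journals/algorithmica/NovotnaOPRLW21}, which needs balanced separators of size $\widetilde{\Oh}(\Delta)$ (provided by $N[X]$) plus a single-exponential algorithm parameterized by treewidth. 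Since the corollary is proved purely by invoking these results, citing the right one matters.
\end{itemize}
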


\paragraph{Computing a decomposition.}
Finally, let us discuss the problem of computing a tree decomposition of bounded induced packing treewidth.
It turns out that a simple argument allows us to reduce this problem to the problem of computing a tree decomposition with bounded independence number, which in turn can be done using the algorithm of Dallard, Fomin, Golovach, Korhonen, and Milani\v{c}~\cite{dallard2025computing}.
Summing up, we obtain the following result, which can be seen as a constant-factor \textsf{XP}-approximation algorithm under the natural parameterization.

\begin{restatable}{theorem}{thmcomputedecomposition}\label{thm:computedecomposition}
Let $\cH \neq \emptyset$ be a fixed finite family of connected graphs.
Given an $n$-vertex graph $G$ and an integer $k$,
in time $2^{\Oh(k^2)}n^{\Oh(k)}$ one can either output a tree decomposition of $G$ with induced $\cH$-packing number at most $8k$, or correctly report that $\treepi_{\calH}(G)>k$.
\end{restatable}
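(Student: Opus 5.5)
The plan is to reduce to computing a decomposition of bounded independence number in an auxiliary graph, and then to invoke the algorithm of Dallard, Fomin, Golovach, Korhonen, and Milani\v{c}~\cite{dallard2025computing}. That algorithm, given a graph and an integer $k$, in time $2^{\Oh(k^2)}n^{\Oh(k)}$ either returns a tree decomposition with independence number at most $8k$, or reports that the tree-independence number exceeds $k$.

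Let $h$ be the maximum number of vertices of a graph in $\cH$; it is a constant. We build an auxiliary graph $G'$ on the same vertex set $V(G)$. Two vertices $u,v$ are adjacent in $G'$ if and only if there is no pair of anticomplete induced copies $H_u \ni u$ and $H_v \ni v$ of graphs from $\cH$. Equivalently, $uv \in E(G')$ exactly when $u$ or $v$ lies in no copy at all, or every copy through $u$ touches or is adjacent to every copy through $v$. Vertices contained in no copy should not constrain anything, so they are made universal in $G'$ (or handled separately: they never contribute to a packing, so any decomposition is fine for them). We also add all edges of $G$, so that every tree decomposition of $G'$ is also one of $G$. Adjacent vertices of $G$ cannot lie in anticomplete copies anyway, so adding these edges does not change the intended meaning. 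The graph $G'$ can be computed in time $n^{\Oh(h)}$ by enumerating pairs of copies.

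The key correspondence should hold per bag. Fix a bag $B$. An induced $\cH$-packing of size $p$ whose members all meet $B$ gives, by choosing one vertex of $B$ in each member, $p$ vertices of $B$ that are pairwise nonadjacent in $G'$, since any two of them lie in anticomplete copies. Hence $\alpha(G'[B])$ is at least the packing number of $B$. For the converse we have a problem: an independent set $I \subseteq B$ in $G'$ only guarantees that the copies are pairwise anticomplete for each pair separately, not that one single system of copies is simultaneously pairwise anticomplete. So the reverse inequality, i.e., the statement that $\treepi_\cH(G) \le k$ implies the tree-independence number of $G'$ is small, is the main obstacle. I would attempt the following. Given an optimal decomposition $\mathcal{T}$ of $G$, enlarge each bag $B$ by replacing it with the union over $v \in B$ of a bounded neighbourhood (for instance $N^{h}[v]$) only when necessary, so that $\mathcal{T}$ stays a tree decomposition of $G'$. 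To bound $\alpha(G'[B])$, take a large $G'$-independent set in $B$ and greedily extract a genuine packing. Pick $v_1$ with a copy $H_1$; the vertices whose every copy conflicts with $H_1$ form a set dominated in a suitable sense. Using that $G'$-independence means that, for each pair, \emph{some} copies are anticomplete, a Ramsey-type argument gives either $k+1$ simultaneously anticomplete copies or a large set where conflicts are concentrated. With constant $h$ this loses only a constant factor. Given the constant $8$ in the statement, I expect the intended argument to be much cleaner: one defines $G'$ so that its independent sets in a bag are exactly (projections of) packings, for instance by working on a graph whose vertices are the copies themselves, with edges between touching or adjacent copies. In that graph the induced packings are precisely the independent sets, and a bag of $G$ lifts to the set of copies meeting it, which still forms a tree decomposition because copies are connected.

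I would therefore commit to that version. Let $G^\ast$ be the graph with vertex set the copies of graphs from $\cH$ in $G$ (there are $n^{\Oh(h)}$ of them), with edges between copies that intersect or are adjacent. Each bag $B$ of a decomposition of $G$ is replaced by $\{H : V(H)\cap B \neq \emptyset\}$. Connectivity of each $H$ makes the bags containing a fixed copy form a subtree, and adjacent copies share a bag. Thus $\alpha$ of the lifted bags equals the packing number, so the tree-independence number of $G^\ast$ is at most $\treepi_\cH(G)$. Conversely, from a decomposition of $G^\ast$ with independence number at most $8k$, we project it back to $G$: each vertex $v$ is associated with the copies containing it, with a special treatment for vertices lying in no copy, which are attached as extra vertices. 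The resulting decomposition of $G$ has packing number at most $8k$. The delicate points are the vertices in no copy, and the fact that the running time of the algorithm applied to $G^\ast$ is polynomial in $|V(G^\ast)| = n^{\Oh(1)}$, which keeps the bound $2^{\Oh(k^2)}n^{\Oh(k)}$.
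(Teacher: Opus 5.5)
\textbf{There is a genuine gap: the back-projection, which you only assert.} The route you commit to is the paper's. Your $G^\ast$ is exactly $G^\circ_{\cH}$, and the lifting $B\mapsto\{X : X\cap B\neq\emptyset\}$ is the paper's $\beta^\circ$. The forward inequality $\treealpha(G^\ast)\le\treepi_{\cH}(G)$, the call to the algorithm of Dallard et al., and the running-time count are all fine. The missing piece is the converse direction: turning a decomposition of $G^\ast$ with independence number at most $8k$ into a decomposition of $G$ with packing number at most $8k$. This is the nontrivial half of the argument.

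The phrase ``each vertex $v$ is associated with the copies containing it'' does not say how. The natural reading is $\beta(t)=\bigcup\beta^\circ(t)$, i.e.\ $T_v=\bigcup_{X\ni v}T_X$, where $T_X$ is the set of nodes whose bag contains $X$. This does give a tree decomposition of $G$, but \emph{not} the packing bound. A copy $X$ meeting $\beta(t)$ at $v$ need not itself lie in $\beta^\circ(t)$; only some other copy through $v$ must. A concrete counterexample:
\begin{itemize}
\item Take $\cH=\{P_3\}$ and let $G$ be the star $K_{1,m}$ with a pendant path $\ell_i a_i b_i$ attached at each leaf $\ell_i$.
\item Then $G^\ast$ is a clique (all copies through the centre) complete to the independent set formed by the copies $\{\ell_i,a_i,b_i\}$, $i\in[m]$.
\item So $G^\ast$ has a decomposition in which every bag is a clique. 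Yet the union-projection of its central bag contains every $\ell_i$, and so it meets $m$ pairwise anticomplete copies.
\end{itemize}

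\textbf{The fix.} Take the intersection $T_v=\bigcap_{X\ni v}T_X$ and use the Helly property.
\begin{itemize}
\item The copies through $v$ pairwise intersect, so they form a clique in $G^\ast$ and some bag contains all of them. Hence $T_v$ is a nonempty subtree.
\item For an edge $xy$ of $G$, every copy through $x$ meets or is adjacent to every copy through $y$. So the family $\{T_X : x\in X \text{ or } y\in X\}$ is pairwise intersecting, and Helly gives a node in $T_x\cap T_y$.
\item Put every vertex lying in no copy into \emph{every} bag. This is harmless, since such a vertex never creates an intersection with a copy. Attaching these vertices as extra leaf nodes, as you suggest, fails in general. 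For example, take two anticomplete triangles and a triangle-free vertex adjacent to one vertex of each, with $\cH=\{K_3\}$. A valid decomposition of $G^\ast$ may place the two copies in disjoint bags, so that vertex must appear along a whole path of nodes, not at a single leaf.
\end{itemize}
With this definition, if a copy $X$ meets $\beta(t)$ at $v$, then $t\in T_v\subseteq T_X$, so $X\in\beta^\circ(t)$. Hence every $\beta(t)$-rooted induced $\cH$-packing is an independent set of $G^\ast[\beta^\circ(t)]$, and the packing number is at most $8k$.
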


One may wonder whether \cref{thm:computedecomposition} can be improved to, say, a fixed-parameter or exact algorithm. This turns out to be unlikely, under standard complexity-theoretic assumptions.
Indeed, another simple reduction allows us to reuse known hardness results for computing tree-independence number~\cite{dallard2024treewidth,dallard2025computing}, yielding the following lower bounds.

\begin{restatable}{theorem}{thmcomputehardness}\label{thm:computehardness}
    Let $\cH \neq \emptyset$ be a fixed finite family of connected graphs.
\begin{enumerate}
        \item It is \NP-hard to decide whether a given graph $G$ satisfies $\treepi_{\cH}(G) \leq 4$.
        \item It is \Wone-hard to approximate induced $\cH$-packing treewidth within any constant factor.
        \item Assuming Gap-ETH, there is no $f(k) \cdot n^{o(k)}$-time $g(k)$-approximation algorithm for  induced $\cH$-packing treewidth, for any computable functions $f$ and $g$.
\end{enumerate}
\end{restatable}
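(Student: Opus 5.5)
We prove the three parts of \cref{thm:computehardness} by a single reduction from tree-independence number, that is, from $\treepi_{P_1}$, and then invoke the known hardness results for that parameter~\cite{dallard2024treewidth,dallard2025computing}. Fix a graph $H\in\cH$ with the minimum number of vertices. Given a graph $G$, build $G'$ by blowing up every vertex $v$ into a copy $H_v$ of $H$. Make $H_u$ and $H_v$ complete to each other whenever $uv\in E(G)$, and anticomplete otherwise.

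The core claim is that $\treepi_{\cH}(G')=\treepi_{P_1}(G)$.

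\emph{Upper bound.} Start from a tree decomposition of $G$ with independence number $\alpha$ and replace each $v$ by $V(H_v)$. Because every $H_v$ is complete to $H_u$ for $uv \in E(G)$, the edges of $G'$ are covered, so this is a tree decomposition of $G'$. Now take an induced $\cH$-packing $F_1,\dots,F_m$ meeting a bag. Each $F_i$ is connected, since $\cH$ consists of connected graphs. Pick for each $i$ a vertex $v_i$ of $G$ such that $F_i$ meets $H_{v_i}$ inside the bag. The $v_i$ are pairwise distinct and pairwise non-adjacent in $G$: if $v_i=v_j$ or $v_iv_j\in E(G)$, then $F_i$ and $F_j$ would share a vertex or be joined by an edge, contradicting that the packing is anticomplete. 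Hence $\{v_1,\dots,v_m\}$ is an independent set in the bag of $G$, and $m\le\alpha$.

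\emph{Lower bound.} Take a tree decomposition $\mathcal T'$ of $G'$. For each $v$, choose one vertex $x_v\in V(H_v)$. Restricting the bags to $\{x_v : v\in V(G)\}$ gives a tree decomposition of the induced subgraph $G'[\{x_v\}]\cong G$. This transfer works because induced subgraphs inherit tree decompositions, which also handles $H=P_1$. Now let $I$ be an independent set of $G$ whose vertices all lie in one restricted bag. The copies $H_v$ for $v\in I$ are then pairwise anticomplete copies of $H$, each meeting that bag through $x_v$. They form an induced $\cH$-packing of size $|I|$, so the packing number of $\mathcal T'$ is at least the independence number of the derived decomposition of $G$. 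Taking the minimum over all $\mathcal T'$ gives $\treepi_{\cH}(G')\ge\treepi_{P_1}(G)$.

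The reduction is polynomial, since $|V(G')|=|V(H)|\cdot|V(G)|$ with $|V(H)|$ a constant, and it preserves the parameter exactly. Therefore each hardness result transfers directly:
\begin{enumerate}
\item deciding $\treepi_{P_1}\le4$ is \NP-hard~\cite{dallard2024treewidth}, which gives part~(1);
\item the \Wone-hardness of constant-factor approximation from~\cite{dallard2025computing} gives part~(2), because the parameter is unchanged;
\item the Gap-ETH lower bound from~\cite{dallard2025computing} gives part~(3), because $n$ grows only by a constant factor, so $n^{o(k)}$ running time is preserved.
\end{enumerate}

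The main obstacle is the lower bound direction of the core claim, where one must derive a decomposition of $G$ from an arbitrary decomposition of $G'$ without losing anything. Choosing one representative $x_v$ per blob resolves this cleanly. The connectivity of the members of $\cH$ is used only in the upper bound, to map each packed subgraph to a single vertex of $G$.
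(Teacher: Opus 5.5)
Your substitution reduction differs from the paper's construction, and your lower bound and the transfer of the three hardness results are fine. There is, however, a gap in the upper bound, at its only nontrivial step. You assert that $v_i=v_j$ would make $F_i$ and $F_j$ share a vertex or be joined by an edge. This is false as stated: $F_i$ and $F_j$ are disjoint, and they may meet $H_v$ in two vertices that are non-adjacent in $H_v$.

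This case has to be excluded by an argument, and it is exactly where connectivity and your choice of a minimum-size $H$ are needed:
\begin{itemize}
\item Suppose $F_i$ had a vertex outside $V(H_v)$. Since $F_i$ is connected, some edge of $F_i$ would join $V(H_v)$ to a vertex $y\in V(H_u)$ with $uv\in E(G)$.
\item Such a $y$ is complete to $V(H_v)$, so it is adjacent to $F_j\cap V(H_v)\neq\emptyset$, contradicting anticompleteness. Hence $F_i\subseteq V(H_v)$, and symmetrically $F_j\subseteq V(H_v)$.
\item $F_i$ and $F_j$ are disjoint induced copies of members of $\cH$, each with at least $|V(H)|$ vertices by minimality. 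So they cannot both fit into $H_v$.
\end{itemize}
Without minimality of $|V(H)|$ the claim genuinely fails. For $\cH=\{P_1,P_3\}$ and $H=P_3$, the two ends of a single $H_v$ form a packing of two $P_1$'s. Your closing remark that connectivity serves only ``to map each packed subgraph to a single vertex'' therefore misplaces the role of both hypotheses. Choosing $v_i$ needs neither; ruling out $v_i=v_j$ needs both.

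With this repaired, your proof is a valid alternative. The paper instead uses $G*H$, which attaches a pendant copy of $H$ at every vertex of $G$ (\cref{lem:addsatelites}). Both constructions have an easy lower bound, obtained by restricting to one representative per vertex of $G$. The difference lies in the upper bound:
\begin{itemize}
\item In the paper's construction, the central bags are subsets of $V(G)$. Distinct packing members therefore automatically meet them in distinct, non-adjacent vertices. Connectivity is used only to see that a leaf bag $V_v$ has packing number $1$.
\item Your substitution requires the blob argument above instead.
\end{itemize}
One point in your favour is that you handle an arbitrary family $\cH$ directly through a minimum-size $H$. The paper proves the equality only for $\{H\}$ and then cites \cref{lem:structural-package}~(\ref{sprop:4}). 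That lemma on its own yields only $\treealpha(G)\le\treepi_{\cH}(G*H)$. Bounding the leaf bags for a general $\cH$ again requires $H$ to be of minimum size, so that a member avoiding $v$ cannot fit inside $V_v\setminus\{v\}$.
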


\paragraph*{Organization}
In~\cref{sec:overview} we give a high-level overview of our results and techniques.
In~\cref{sec:prelim} we introduce notation and discuss some basic properties of induced packing treewidth.
In~\cref{sec:containers} we show ``container lemmas'' for \MWIS and \lcol{3} which lie at the heart of our algorithmic results.
We use these lemmas in~\cref{sec:P4} to show \cref{thm:mwisp4} and in~\cref{sec:P5} to show~\cref{thm:colpfive}.
In~\cref{sec:P3} we show~\cref{thm:octpthree} and~\cref{thm:approx-cmso-p3}.
In~\cref{sec:dbs} we prove~\cref{thm:dbs-together}.
In~\cref{sec:computing} we discuss positive and negative results concerning computing tree decompositions of bounded induced packing treewidth; in particular, we prove~\cref{thm:computedecomposition}.
Finally, in~\cref{sec:conclusion} we discuss open problems and directions for future research.

%% file: sec-overview.tex
Let us give a high-level overview of our work.
The main message is that induced packing treewidth provides a common language for two familiar sources of tractability in algorithmic graph theory: excluding induced structures and imposing structure on tree decompositions. This viewpoint yields quasipolynomial-time algorithms for several basic problems as well as structural consequences, and the proofs are organized around a small number of recurring ideas.
In what follows, let $G$ denote the instance graph and let $n$ be the number of its vertices.

\paragraph{Container lemmas.}
The algorithmic core of the paper is a pair of ``container lemmas,'' one for \MWIS and one for \lcol{3},
that allow us to clean the graph with respect to a given bag of a tree decomposition of bounded induced-$\cH$-packing treewidth.
Conceptually, these lemmas isolate the difficult interaction with one bag and replace it with a quasipolynomial-size family of residual instances that are easier to recurse on.
For simplicity, let us assume that $\cH = \{H\}$, where $|V(H)| = h$, but the lemma works for any fixed finite family $\cH$ of graphs.

Both container lemmas follow the same general idea, so let us describe the case of \MWIS, i.e., \cref{thm:H-clean-containers-is}, in more detail.
Consider a tree decomposition of $G$ witnessing that $\treepi_H(G) \leq k$, where $k$ is a constant,
and let $B$ be a bag of this tree decomposition.
We show that one can enumerate a \emph{quasipolynomial-size} family $\cF_{\alpha}$ of pairs $(I,F)$, where $I$ is an independent set and $F \subseteq V(G) \setminus N[I]$, such that:
\begin{enumerate}
\item for each $(I,F) \in \cF_\alpha$, the graph $G[F]$ has no induced copy of $H$ that intersects $B \cap F$, and
\item for every independent set $I^*$ in $G$ there is a pair $(I,F) \in \cF_\alpha$ such that $I \subseteq I^* \subseteq I \cup F$.
\end{enumerate}
The second property implies that, in order to solve \MWIS in $G$,
it is sufficient to solve \MWIS for $G[F]$, for every $(I,F) \in \cF_\alpha$.
Indeed, denoting by $I_F$ the maximum-weight independent set in $G[F]$,
the optimum solution has weight equal to the maximum weight of a set $I \cup I_F$, taken over all $(I,F) \in \cF_\alpha$.

Let us say a few words about the proof of \cref{thm:H-clean-containers-is}.
Essentially, the idea is inspired by the quasipolynomial-time algorithm for \MWIS in $P_t$-free graphs~\cite{pilipczuk2021quasi}: we try to guess $I^*$ by branching on carefully chosen vertices, and stop the search once the first property is satisfied.
The role of bounded induced packing treewidth is to provide a measure of progress: we do not branch blindly, but rather branch so as to eliminate many induced copies of $H$ that still insist on touching the chosen bag $B$.

Consider the family $\cU$ of vertex sets of all largest induced $H$-packings in $G$ in which every copy of $H$ intersects $B$.
Since $B$ is a bag of a tree decomposition witnessing that $\treepi_H(G) \leq k$,
we observe that every member of $\cU$ has at most $kh$ vertices, and thus, $\cU$ has size at most $n^{hk}$.

Now, consider the vertex set $X$ of an induced copy of $H$ that intersects $B$.
Note that $N[X]$, i.e., the set consisting of $X$ and its neighbors, intersects every set $U \in \cU$, as otherwise we could find a larger $H$-packing.
Since $|X| = h$, where $h$ is a constant, there is a vertex $x \in X$ such that $N[x]$ intersects a constant fraction of all elements of $\cU$.
We \emph{branch} on $x$, i.e., create one instance corresponding to putting $x$ into $I^*$ and the other one corresponding to excluding $x$ from $I^*$.
Note that in the first branch we can remove $N[x]$ from the graph, thus shrinking $\cU$ significantly.
Since the size of $\cU$ is polynomial in $n$, we conclude that the recursion tree has $n^{\Oh(\log n)}$ leaves, and we return the corresponding instances as members of $\cF_\alpha$: $I$ denotes the vertices guessed to belong to $I^*$, and $F$ denotes the vertices that were not discarded in the process.

The container lemma for \lcol{3}, i.e., \cref{thm:H-clean-containers-coloring}, is shown similarly, and it enumerates a family $\cF_\chi$ of instances $(G',L')$ of \lcol{3}, where $G'$ is an induced subgraph of $G$, such that
\begin{enumerate}
\item for each $(G',L') \in \cF_\chi$, the graph $G'$ has no induced copy of $H$ that intersects $B \cap V(G')$, and
\item $(G,L)$ is a yes-instance of \lcol{3} if and only if $\cF_\chi$ contains a yes-instance of \lcol{3}.
\end{enumerate}

Summing up, both container lemmas reduce the problem to the case that no copy of $H$ intersects one chosen bag $B$. This yields a common recursive template: clean the graph around $B$, solve recursively the components that lie away from $B$, and use additional structure to handle the components that still touch $B$.
From this point on, the whole story is about understanding what this remaining structure can look like for different choices of $H$ and different optimization problems.

A well-known property of tree decompositions is that there is always a \emph{central bag} $B$ such that every component of $G-B$ has at most $|V(G)|/2$ vertices.
Thus, we apply the appropriate container lemma for the central bag $B$ and consider each component of a member of the returned family $\cF$ (either $\cF_\alpha$ or $\cF_\chi$, depending on the problem) independently.
We distinguish two types of such components: those that do not intersect $B$, and those that do.
By the choice of $B$, the components of the first type are small and can be solved recursively.
This leaves only the components that intersect $B$.

\paragraph{\MWIS and \lcol{3} in graphs with bounded $\treepi_{P_3}$.}
The cleanest setting for this template is bounded induced $P_3$-packing treewidth, as suggested by~\cref{q:bfk}.
In this case, dealing with components that intersect $B$ is easy, as we can show that such components are complete graphs (see \cref{lem:p3clean}).

Thus, bounded $\treepi_{P_3}$ serves as the warm-up case for our framework: once the components touching $B$ collapse to cliques, the recursion closes immediately. For \MWIS, this already resolves (in a weaker form, as we get a quasipolynomial-time, not a polynomial-time algorithm) the $P_3$ side of the motivating question of Bodlaender, Fomin, and Korhonen, and for both \MWIS and \lcol{3} it shows in the cleanest possible form how the framework is meant to work. We can then solve the problem in quasipolynomial time by applying the appropriate container lemma to the central bag $B$ and solving each component of the returned instances either recursively (if it does not intersect $B$) or directly (if it does).
The overall running time of both algorithms is $n^{\Oh(\log^2 n)}$, where one $\log n$ in the exponent comes from the recursion in the container lemma, and the other $\log n$ comes from the recursion on components that do not intersect $B$. A similar running time appears in all our main results, namely \cref{thm:mwisp4,thm:colpfive,thm:octpthree}.

\paragraph{Going beyond $P_3$: \cref{thm:mwisp4,thm:colpfive}.}
The real challenge is to make the same paradigm work once the components of the returned instances that intersect $B$ are no longer trivial. This already happens for longer paths, and this is where the framework has to interact in a serious way with graph structure rather than merely exposing it.

In particular, in the proof of \cref{thm:mwisp4}, i.e., for the case of \MWIS in graphs of bounded induced $P_4$-packing treewidth, we use \emph{modular decomposition}, which allows us to reduce the problem to the case that the graph is \emph{prime} (see~\cref{sec:P4}).
Then, we use the result of Hayward, Hougardy, and Reed~\cite{DBLP:conf/soda/HaywardHR02} that if $G$ is prime, then either it has a simple structure (namely, it is a \emph{split graph}) and thus \MWIS can be solved in polynomial time, or every vertex is contained in an induced $P_4$.
This is exactly the kind of dichotomy that fits the container viewpoint: either the cleaned instance lands in a tractable  class, or induced $P_4$'s remain ubiquitous enough to force further progress. Combining this with the container lemma for \MWIS, we conclude that the problem can be decomposed into a number of instances that are either multiplicatively smaller or have a simple structure.
The overall running time of the algorithm is $n^{\Oh(\log^2 n)}$, as in the case of $H=P_3$.

For \cref{thm:colpfive}, i.e., for the case of \lcol{3} in graphs of bounded induced $P_5$-packing treewidth, we use a different approach, inspired by the work of Hoang, Kamiński, Lozin, Sawada, and Shu~\cite{DBLP:journals/algorithmica/HoangKLSS10} on \lcol{$k$} in $P_5$-free graphs. 
Here the point is different: the residual components need not collapse to a standard graph class, but they still admit enough explicit structure to either reduce to polynomial-time solvable 2-\textsc{Sat} instances or split into multiplicatively smaller subinstances. After applying the appropriate container lemma, we carefully analyze the structure of the components of instances in $\cF_\chi$ that intersect $B$ and show that they can be decomposed in exactly this way.
Again, the overall running time of the algorithm is $n^{\Oh(\log^2 n)}$, as in the case of $H=P_3$.

\paragraph{Back to $P_3$: \textsc{Odd Cycle Transversal} (\cref{thm:octpthree}).}
Returning to the case of $H=P_3$, we next consider the \OCT problem. This is where the framework must cope with a solution consisting of two interacting parts rather than one, so the warm-up case no longer closes for free.
Note that it is equivalent to finding a maximum-weight induced bipartite subgraph of a given graph.
In other words, the goal is to find two disjoint independent sets $X_1, X_2$ of maximum total weight;
we will adapt this viewpoint.

The actual proof is more delicate than in the previous $P_3$-based arguments, because we must control two independent sets simultaneously. In particular, the difficulty is no longer to describe one good residual instance, but to synchronize two such descriptions.
We again apply the container lemma for \MWIS to the graph $G$ and the central bag $B$, and we branch over 
pairs of containers, one for $X_1$ and one for $X_2$.
For every such pair we obtain a residual graph together with information, for each remaining vertex, on whether it may still belong to $X_1$, to $X_2$, or to both.
This yields two layers $V_1$ and $V_2$, where $V_i$ consists of vertices that may still be placed in $X_i$.
In each single layer, each component that intersects $B$ is a clique, as in the previous $P_3$-based arguments.
However, the two layers may interact in a nontrivial way, and the actual components of the residual graph that intersect $B$ may contain multiple components of each layer.

The first key observation is that each vertex in $B$ is simplicial inside each layer, i.e., its neighborhood restricted to one layer is a clique.
This yields a safe reduction rule for a vertex from $B$ that is already forced to one side:
by comparing it with its same-layer neighbors, one can simplify the instance while preserving the optimum value.
The second idea is that edges between vertices that are already forced to different sides can be safely ignored, as they do not interfere with any feasible solution. Thus, we can break the instance into \emph{layer-connected components} -- maximal subgraphs where any two vertices are joined by a path that avoids cross-layer edges.

We show that, after exhaustive application of the reduction rule, each layer-connected component that still intersects $B$ is actually quite simple, since it can be covered by two cliques.
Consequently, such a component has only a few feasible choices for its intersection with $X_1$ and $X_2$ and can be solved directly.
As components that do not intersect $B$ can be solved recursively, we again obtain the running time of $n^{\Oh(\log^2 n)}$. Conceptually, this shows that the framework can also handle coupled solutions, not only single-set objectives such as \MWIS.

\paragraph{Blob graphs in graphs with bounded $\treepi_{P_3}$.}
The case of bounded $\treepi_{P_3}$ has another payoff, this time through the language of \emph{blob graphs}~\cite{gartland2021finding,cameron2006independent}.
For a graph $G$, its blob graph $G^\circ$ has a vertex for each connected set in $G$,
and two sets are adjacent in $G^\circ$ if their union is connected in $G$.
We show that for every graph $G$ it holds that $\treepi_{P_3}(G) = \treepi_{P_3}(G^\circ)$.
This allows us to reduce various problems concerning induced packings in $G$ to \MWIS in appropriate induced subgraphs of $G^\circ$. In other words, bounded $\treepi_{P_3}$ is stable under a transformation that turns packing-type optimization problems back into independent-set problems.
Since $\treepi_{P_3}(G^\circ)=\treepi_{P_3}(G)$ is bounded, we already know that the latter problem can be solved in quasipolynomial time (with respect to the number of vertices of this induced subgraph).
For example, this allows us to find a largest induced $H$-packing, for any fixed $H$,
or a largest set of vertices at pairwise distance at least $d$, where $d$ is even.

Another interesting application of blob graphs is \cref{thm:approx-cmso-p3}: a QPTAS for the unweighted version of $(\tw \leq r, \psi)$-\MWIS for any fixed $r$ and any fixed hereditary \textsf{CMSO}$_2$ formula $\psi$.
The idea comes from~\cite{gartland2021finding}.
Essentially, as a solution has bounded treewidth, it can be broken into constant-size components by removing a small fraction of vertices.
We can then look for a largest induced packing of such constant-size pieces by solving \MWIS in an induced subgraph of the blob graph, at the cost of losing an $\epsilon$-fraction of the optimum value.

\paragraph{Dominated balanced separators.} 
We then turn to a structural consequence, namely \cref{thm:dbs-together} on dominated balanced separators.
Consider an $n$-vertex graph $G$ and its tree decomposition witnessing that $\treepi_{\cH}(G) \leq k$.

Suppose first that $\cH$ contains a path $P_t$.
Starting from the central bag $B$ of the decomposition, we take a maximum induced $P_t$-packing, where every path intersects $B$. Let $Y$ be the vertex set of this packing; we have $|Y| = \Oh(kt)$.
If $N[Y]$ is a balanced separator, then we are done.
Otherwise, there is a unique large component $G'$ of $G - N[Y]$, where large means that it has more than $n/2$ vertices.
Clearly $G'$ intersects $B$, as removing $B$ leaves only small components.
Furthermore, there is no induced $P_t$ in $G'$ intersecting $B$, as otherwise we could add it to the initial packing, contradicting its maximality. Thus the only obstruction to having a balanced separator already in hand is a single large component that still sees the central bag but no longer contains the relevant local pattern through that bag.

We apply the well-known \emph{Gy\'arf\'as path argument}~\cite{gyarfas1987problems} (see also~\cite{DBLP:journals/algorithmica/BacsoLMPTL19}) to $G'$ starting from a vertex $v \in B \cap V(G')$. This yields an induced path $X$ starting at $v$ such that $N[X]$ is a balanced separator, i.e., every component of $G'-N[X]$ is small.
Consequently, $N[X \cup Y]$ is a balanced separator of $G$.
As $X$ has fewer than $t$ vertices, we observe that $X \cup Y$ has at most $\Oh(kt)$ vertices. In this way, the local packing bound around one bag is converted into a global separator by adding only one short induced path.

For the case that $\cH$ contains all cycles, the overall strategy is similar, but already the first step requires a different tool.
Again, we start with the central bag $B$. However, at this point we cannot take $Y$ to be the vertex set of a maximum induced cycle packing, as the cycles in the packing might be of unbounded length.
Instead, we use the recent result of Ahn and Kwon~\cite{Anh-Kwon} concerning the \emph{induced Erd\H{o}s--P\'osa property} for \emph{$B$-cycles}, i.e., cycles that intersect $B$.
They show that if we cannot pack $k+1$ pairwise anticomplete $B$-cycles (which is the case since $B$ is a bag of a tree decomposition witnessing that $\treepi_{\cH}(G) \leq k$), then there is a set $Y$ of $\Oh(k^3)$ vertices such that $N[Y]$ intersects all such cycles. This is exactly what we need from $Y$: in the cycle setting, induced Erd\H{o}s--P\'osa provides a bounded hitting set where the path case used a bounded packing.

Again, if $N[Y]$ is a balanced separator, then we are done, so suppose that there is a unique large component $G'$ of $G-N[Y]$; recall that $G'$ must intersect $B$.
As before, we would like to break $G'$ into small pieces by removing only a few neighborhoods.
However, this time we cannot use the Gy\'arf\'as path argument or any similar tool.
Instead, we use the fact that, after contracting the components of $G'-B$, the resulting graph is a tree,
and thus, it can be broken into small pieces by removing a single vertex.
If this vertex happens to be in $B$, then we are done.
However, if this vertex corresponds to a contracted component $C$ of $G'-B$, we cannot just remove it, as the corresponding component might contain many vertices. Instead, we apply induction to $C$, which is small enough for this purpose since it does not intersect $B$.
Summing up, we obtain a set of size $\Oh(k^3 \log n)$ whose neighborhood is a balanced separator. The extra logarithmic factor is precisely the price of descending through the unique large contracted component.
The $\log n$ factor comes from the induction on the contracted component.

This separator theorem immediately yields a number of algorithmic consequences. As already mentioned in the introduction, the existence of balanced separators dominated by a polylogarithmic number of vertices can be used to obtain a QPTAS for \MWIS~\cite{chudnovsky2024quasi} and subexponential-time algorithms for \lcol{3}~\cite{DBLP:journals/dam/GroenlandORSSS19}, as well as for many problems concerning finding sparse induced subgraphs of large weight~\cite{DBLP:journals/algorithmica/NovotnaOPRLW21,DBLP:journals/algorithmica/BacsoLMPTL19}, such as \textsc{Max Induced Forest} (equivalently, \textsc{Feedback Vertex Set}),  \textsc{Max Induced Matching}, \textsc{Max Induced Planar Subgraph}, or \textsc{Max Induced Subgraph of Degeneracy $d$}, for constant $d$. Notably, as planar and $d$-degenerate graphs may have unbounded treewidth, the last two problems are not captured by the $(\tw \leq r, \psi)$-\MWIS formalism.

\paragraph{Computing decompositions (\cref{thm:computedecomposition,thm:computehardness}).}
Finally, we address the algorithmic prerequisite behind the whole framework: how to compute a tree decomposition of bounded induced $\cH$-packing number. This is essential because all preceding algorithms begin with such a decomposition (or at least its central bag) in hand.

First, we show that for any family $\cH$ of connected graphs and any graph $G$,
$\treepi_{\cH}(G)$ is equal to the tree-independence number of the subgraph of $G^\circ$ induced by all induced copies of graphs from $\cH$ in $G$.
This allows us to reduce the problem of computing a tree decomposition of bounded induced $\cH$-packing number to the already studied problem of computing a decomposition of bounded tree-independence number.
Furthermore, if $\cH$ is a fixed finite family, then the discussed induced subgraph of $G^\circ$ has polynomial size. This yields \cref{thm:computedecomposition}.
This result is important for algorithmic applications, as we always start with a tree decomposition witnessing that $\treepi_{\cH}(G)$ is bounded.

On the other hand, let $G,H$ be graphs, where $H$ is connected,
and let $G * H$ be obtained from $G$ by adding, for each vertex $v \in V(G)$, a copy of $H$ and identifying one vertex of this copy with $v$.
It is easy to see that $\treepi_H(G * H) = \treealpha(G)$.
This simple reduction allows us to lift known hardness results from $\treealpha$ to $\treepi_H$, for any connected $H$. This way we obtain \cref{thm:computehardness}. In particular, \cref{thm:computedecomposition} is close to the best one can hope for at this level of generality. 

%% file: sec-prelim.tex
For a positive integer $n$, by $[n]$ we denote the set $\{1,\ldots,n\}$. For a set $X$, by $2^X$ we denote the family of all subsets of $X$. All logarithms are base 2.

\paragraph{Graphs}
Throughout the paper, graphs have finite vertex sets, no loops, and no parallel edges. 
Let $G$ be a~graph with vertex set $V(G)$ and edge set $E(G)$.
For $X \subseteq V(G)$, we denote the subgraph of $G$ \emph{induced by $X$} as $G[X]$,
that is, $G[X] = (X, \{uv ~|~ u, v \in X \mbox{ and } uv \in E(G) \})$,
and we write $G-X$ for $G[V(G) \setminus X]$.
When it does not lead to confusion, we will sometimes identify induced subgraphs with their vertex sets.

A graph $G$ contains a graph $H$ as a \emph{contraction} if we can obtain $H$ from $G$ by a series of edge contractions.
Note that, unlike induced minors, we are not allowed to delete vertices.

By $N(v)$ we denote the set of neighbors of a vertex $v$ in $G$, and we use $N[v]$ for $N(v) \cup \{v\}$.
For a set $X \subseteq V(G)$, we define $N[X] = \bigcup_{v \in X} N[v]$ and $N(X) = N[X] \setminus X$.
Two disjoint sets $X,Y \subseteq V(G)$ are \emph{complete} (resp., \emph{anticomplete}) if all edges between them exist (resp., there are no edges between them). 
By $\alpha(G)$ we denote the size of a largest independent set in $G$.

A \emph{vertex-weighted graph} is a pair $(G,\wei)$ where $G$ is a graph and $\wei:V(G)\to\mathbb Q_{\ge0}$ is a weight function. For a set $X\subseteq V(G)$, we write $\wei(X)=\sum_{v\in X}\wei(v)$, and we write $\wei(G):=\wei(V(G))$.
We assume that all computations on weights can be performed in constant time.

\paragraph{Tree decompositions and balanced separators}
For a graph $G$, a \emph{tree decomposition} of $G$ is a pair $(T,\beta)$, where $T$ is a tree and $\beta \colon V(T) \rightarrow 2^{V(G)}$ is a map with the following properties:
\begin{itemize}
    \item For every $v \in V(G)$, there exists $t \in V(T)$ such that $v \in \beta(t)$.
    \item For every $uv \in E(G)$, there exists $t \in V(T)$ such that $u, v \in \beta(t)$.
    \item For every $v \in V(G)$, the subgraph of $T$ induced by $\{ t \in V(T) ~|~ v \in \beta(t) \}$ is connected.
\end{itemize}
For every $t \in V(T)$, we refer to $t$ as a \emph{node} of $T$, and to $\beta(t)$ as a \emph{bag} of $(T, \beta)$.
The \emph{width} of a tree decomposition $(T, \beta)$ is $\max_{t\in V(T)} |\beta(t)|-1$.
The \emph{treewidth} of $G$, denoted by $\tw(G)$, is the minimum width of a tree decomposition of $G$.
The \emph{independence number} of a tree decomposition $(T, \beta)$ of $G$ is $\max_{t\in V(T)} \alpha(G[\beta(t)])$. The \emph{tree-independence number} of $G$, denoted $\treealpha(G)$, is the minimum independence number of a tree decomposition of $G$.

For a weighted graph $(G,\wei)$, a \emph{balanced separator} of $G$ is a set $S \subseteq V(G)$ such that every component of $G-S$ has weight at most $\wei(G)/2$. We will often work with balanced separators for \emph{uniform} weight functions, i.e., $\wei(v)=1$ for every $v\in V(G)$. In this case, every component of $G-S$ has at most $|V(G)|/2$ vertices.

The following lemma is a standard fact about tree decompositions and balanced separators (e.g.~\cite[Lemma~7.19]{cygan2015parameterized}).

\begin{lemma}\label{lem:balanced-bag}
Let $(G,\wei)$ be a weighted graph with a tree decomposition $(T,\beta)$.
Then there is a node $t\in V(T)$ such that
$\beta(t)$ is a balanced separator of $(G,\wei)$.
\end{lemma}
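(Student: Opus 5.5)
The plan is the standard ``central bag'' argument, carried out for a general nonnegative weight function $\wei$. First fix an orientation for each edge $st$ of $T$. Removing $st$ splits $T$ into two subtrees $T_s \ni s$ and $T_t \ni t$. Put $V_s = \bigcup_{u \in V(T_s)} \beta(u) \setminus (\beta(s)\cap\beta(t))$, and define $V_t$ symmetrically. By the connectivity axiom of tree decompositions, $V_s$ and $V_t$ are disjoint. By the edge axiom, there are no edges between them, so $\beta(s)\cap\beta(t)$ separates them. Orient $st$ towards $t$ if some component of $G-\beta(s)$ with weight more than $\wei(G)/2$ lies inside $V_t$. Otherwise leave it unoriented or orient it arbitrarily.

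The key observation is about a node $s$ whose bag is not a balanced separator. Then $G-\beta(s)$ has a component $C$ with $\wei(C)>\wei(G)/2$, and $C$ is unique because weights are nonnegative. Since $C$ is connected and avoids $\beta(s)$, the separation properties above place $C$ entirely inside $\bigcup_{u\in V(T_t)}\beta(u)$ for exactly one neighbor $t$ of $s$. Hence each such node $s$ has exactly one outgoing edge.

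Next, assume for contradiction that no bag is a balanced separator. Then every node has out-degree exactly one. A finite tree with $|V(T)|$ nodes has $|V(T)|-1$ edges, so by counting some edge $st$ is oriented both ways. That gives a component $C_s$ of $G-\beta(s)$ of weight more than $\wei(G)/2$ lying in the $t$-side, and a component $C_t$ of $G-\beta(t)$ of weight more than $\wei(G)/2$ lying in the $s$-side. Together these weigh more than $\wei(G)$, so they must share a vertex $v$. Now $v\notin\beta(s)$, and $v$ appears in some bag of $T_t$, so by connectivity of the subtree of $v$, the node $v$ appears only in bags of $T_t$. Symmetrically, $v$ appears only in bags of $T_s$. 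This is a contradiction, since $T_s$ and $T_t$ are disjoint and $v$ appears in at least one bag. Therefore some bag is a balanced separator.

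The main point needing care is the claim that a connected set avoiding $\beta(s)$ lives within the subtree of a single neighbor of $s$. I would prove it by taking an edge $xy$ of $C$. The subtrees of $x$ and $y$ intersect by the edge axiom, and neither contains $s$. So their union is a connected subgraph of $T-s$, which puts both in the same component of $T-s$. Propagating this along paths in $C$ finishes the claim. This is routine, and alternatively the whole lemma can simply be cited from~\cite[Lemma~7.19]{cygan2015parameterized}, whose argument extends verbatim to weights.
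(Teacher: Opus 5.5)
Your proof is correct. It is essentially the standard central-bag argument behind \cite[Lemma~7.19]{cygan2015parameterized}, which the paper cites rather than proves. Cygan et al.\ orient each edge towards whichever of $V_s$, $V_t$ has weight exceeding $\wei(G)/2$ and take a sink node; your per-node orientation by the heavy component, followed by the pigeonhole step to a doubly oriented edge, is an equivalent rearrangement of that argument.

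One small fix: drop the option of orienting the remaining edges ``arbitrarily'' and leave them unoriented instead. Your counting step and the final contradiction both need every arc to come from a genuinely heavy component, and an arbitrary arc would break that.
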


\paragraph{$B$-rooted subgraphs and $(B,\cH)$-clean sets}
Let $G$ be a graph and $B \subseteq V(G)$.
Any induced subgraph of $G$ whose vertex set intersects $B$ is called \emph{$B$-rooted}.

We say a set $C\subseteq V(G)$ is \emph{$(B,\cH)$-clean} if
$G[C]$ has no $B$-rooted induced subgraph isomorphic to any member of $\cH$.
If $V(G)$ is $(B,\cH)$-clean, we say that $G$ is $(B,\cH)$-clean.

\paragraph{Induced packings and induced packing treewidth.}
 Let $\cH$ be a family of graphs.
An \emph{induced $\cH$-packing} in $G$ is a set of pairwise anticomplete induced subgraphs of $G$, each isomorphic to a member of $\cH$.
An induced $\cH$-packing is \emph{$B$-rooted} if every member of the packing intersects $B$.
The size of the largest $B$-rooted induced $\cH$-packing in $G$ is denoted by $\pi_{\cH}(G,B)$.

For a tree decomposition $\cT=(T,\beta)$ of $G$, we define $\pi_{\cH}(G,\cT) = \max_{t \in V(T)} \pi_{\cH}(G, \beta(t))$.
Then, $\treepi_{\cH}(G) = \min_{\cT} \pi_{\cH}(G,\cT)$ where the minimum is taken over all tree decompositions $\cT$ of $G$.
When $\cH$ is a singleton $\{H\}$, we abbreviate the notation above by writing $H$ instead of $\{H\}$.

Let us record three basic properties of induced packing treewidth that we will use later.

\begin{lemma}\label{lem:structural-package}
Let $\cH,\cH'$ be families of graphs and $G,G'$ be graphs. The following hold.
\begin{enumerate}
\item\label{sprop:3} If $G'$ is an induced subgraph of $G$, then $\treepi_{\cH}(G')\leq \treepi_{\cH}(G)$.

\item\label{sprop:4} If $\cH'\subseteq \cH$, then $\treepi_{\cH'}(G)\leq \treepi_{\cH}(G)$.

\item\label{sprop:7} For every $t\geq 1$, one has $\treepi_{P_{t+1}}(G)\leq \treepi_{P_t}(G)$.

\item\label{sprop:indminors} Let $H$ be a graph and let $\cH$ be the family of all graphs that contain $H$ as a contraction.
If $G'$ is an induced minor of $G$, then $\treepi_{\cH}(G') \leq \treepi_{\cH}(G)$.
\end{enumerate}
\end{lemma}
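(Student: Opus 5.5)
All four items follow the same pattern: take a tree decomposition $\cT=(T,\beta)$ of $G$ witnessing $\treepi_{\cH}(G)$, transform it into a decomposition of the target graph, and show that every rooted packing for the target (graph, family, bag) maps to a rooted packing of at least the same size for the original (graph, family, bag).

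For \eqref{sprop:3}, restrict the bags to $V(G')$, setting $\beta'(t)=\beta(t)\cap V(G')$; this is a tree decomposition of $G'$. A $\beta'(t)$-rooted induced $\cH$-packing in $G'$ is also one in $G$. Induced subgraphs of $G'$ are induced subgraphs of $G$, and anticompleteness is preserved because $G'$ is induced. Such a packing is rooted at $\beta(t)$, so $\pi_\cH(G',\beta'(t))\le\pi_\cH(G,\beta(t))$.

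For \eqref{sprop:4}, keep $\cT$ unchanged. Every $\cH'$-packing is an $\cH$-packing, so the claim is immediate.

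For \eqref{sprop:7}, keep $\cT$ again. Given a $\beta(t)$-rooted induced $P_{t+1}$-packing, choose in each path a subpath on $t$ vertices that still meets $\beta(t)$. This is possible: delete an endpoint that is not the unique bag vertex; if both endpoints lie in the bag, either one works. The subpaths are induced copies of $P_t$, they are pairwise anticomplete as subsets of anticomplete sets, and they are rooted. The number of members is unchanged.

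For \eqref{sprop:indminors}, the only real work is the contraction step, since deletions are handled by \eqref{sprop:3}. By induction it suffices to treat a single contraction of an edge $uv$ into a vertex $w$. Use the decomposition obtained by replacing every occurrence of $u$ or $v$ in a bag by $w$. This is a tree decomposition: the subtrees of $u$ and $v$ intersect because of a common bag for the edge $uv$, so their union is connected, and the edges at $w$ are covered. Take a rooted packing $\{Q_i\}$ in $G'$ with each $Q_i$ isomorphic to a graph containing $H$ as a contraction. Lift each $Q_i$ to $G$ by replacing $w$, if it belongs to $Q_i$, with $\{u,v\}$; call the result $\widehat Q_i$. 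Then $G[\widehat Q_i]$ contracts onto $G'[Q_i]$, so it contains $H$ as a contraction and hence belongs to $\cH$. This closure of $\cH$ under uncontraction is the key point, and it is why $\cH$ is defined as it is.

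The lifted sets stay pairwise anticomplete: at most one $Q_i$ contains $w$, and an edge from $u$ or $v$ to another lifted set would give an edge from $w$ to it in $G'$. They also stay rooted: if $Q_i$ met the new bag at $w$, then $\widehat Q_i$ contains $u$ or $v$ in the original bag, and other vertices are unchanged.

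I expect the contraction lifting, specifically verifying membership in $\cH$ and rootedness together, to be the main point needing care. The rest is routine.
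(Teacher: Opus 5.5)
Your proposal is correct and follows the paper's proof essentially step by step. You restrict bags for (1), use the trivial inclusion for (2), trim one endpoint of each $P_{t+1}$ for (3) (the paper phrases this as choosing the first or last $t$ vertices), and for (4) you replace $u,v$ by $w$ in every bag and lift each packing member via $w \mapsto \{u,v\}$, using that $\cH$ is closed under un-contraction. Your write-up is slightly more explicit than the paper's about why the modified decomposition is valid and why the lifted sets stay pairwise anticomplete.
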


\begin{proof}
For~(\ref{sprop:3}), let $(T,\beta)$ be a tree decomposition of $G$.
For each $t\in V(T)$, define $\beta'(t)=\beta(t)\cap V(G')$.
Then $(T,\beta')$ is a tree decomposition of $G'$.
Moreover, every $\beta'(t)$-rooted induced $\cH$-packing in $G'$ is also a $\beta(t)$-rooted induced $\cH$-packing in $G$, so
\[
\pi_{\cH}(G',\beta'(t))\leq \pi_{\cH}(G,\beta(t))
\]
for every $t\in V(T)$.
Hence $\pi_{\cH}(G',(T,\beta'))\leq \pi_{\cH}(G,(T,\beta))$, and taking the minimum over tree decompositions of $G$ yields $\treepi_{\cH}(G')\leq \treepi_{\cH}(G)$.

For~(\ref{sprop:4}), every induced $\cH'$-packing is an induced $\cH$-packing.
Hence $\pi_{\cH'}(G,B)\leq \pi_{\cH}(G,B)$ for every $B\subseteq V(G)$, and therefore $\treepi_{\cH'}(G)\leq \treepi_{\cH}(G)$.

For~(\ref{sprop:7}), fix any set $B \subseteq V(G)$ and let $r=\pi_{P_{t+1}}(G,B)$.
Then there are $r$ pairwise anticomplete induced paths $P^1,\ldots,P^r$ on $t+1$ vertices, each intersecting $B$.
For each $i\in [r]$, the first $t$ vertices and the last $t$ vertices of $P^i$ form induced $t$-vertex subpaths whose union is $V(P^i)$, so at least one of them intersects $B$; choose one such subpath and call it $\widehat{P}^i$.
Then $\widehat{P}^1,\ldots,\widehat{P}^r$ form a $B$-rooted induced $P_t$-packing in $G$.
Therefore $\pi_{P_t}(G,B)\geq \pi_{P_{t+1}}(G,B)$.
Since this holds for every $B \subseteq V(G)$, we conclude that $\treepi_{P_{t+1}}(G)\leq \treepi_{P_t}(G)$.

Finally, let us show~(\ref{sprop:indminors}).
By (\ref{sprop:3}), $\treepi_{\cH}$ does not increase when we delete vertices,
it is sufficient to argue that it also does not increase after a single edge contraction.
So assume that $G'$ was obtained from $G$ by contracting an edge $xy$ into a new vertex $z$.
Let $\cT=(T,\beta)$ be a tree decomposition of $G$ with minimum $\pi_H(G,\cT)$.
Let $\cT' = (T,\beta')$ be obtained from $\cT$ by replacing, in every bag, every occurrence of $x$ or $y$ by $z$.
It is standard to verify that $\cT'$ is a tree decomposition of $G'$.

Consider a node $t \in V(T)$ and a maximum $\beta'(t)$-rooted induced $\cH$-packing in $G'$.
If no member of this packing contains $z$, then the same vertices form an induced $\cH$-packing in $G$ (of the same size).
Otherwise, replacing $z$ by $\{x,y\}$ gives an induced $\cH$-packing in $G$ of the same size; here we use the assumption on $\cH$.
This $\cH$-packing is $\beta(t)$-rooted, as $z \in \beta'(t)$ implies that $\{x,y\} \cap \beta(t) \neq \emptyset$.
Thus, for any $t \in V(T)$, it holds that $\pi_{\cH}(G',\beta'(t)) \leq \pi_H(G,\beta(t)) \leq \pi_\cH(G,\cT) = \treepi_{\cH}(G)$.
Consequently, $\treepi_{\cH}(G') \leq \treepi_{\cH}(G)$.
\end{proof}

A tempting generalization of~\Cref{lem:structural-package}~(\ref{sprop:7}) would be to say that if $H'$ is an induced subgraph of $H$, then $\treepi_{H}(G)\leq \treepi_{H'}(G)$ for every graph $G$.
This is, however, false.
Let $K_3$ be the triangle -- the complete graph on 3 vertices, and $\mathsf{Paw}$ be the graph obtained from $K_3$ by adding one vertex adjacent to exactly one vertex of the triangle.
For every $k$, consider the graph $G_k$ defined as follows. 
We start by introducing two disjoint sets of vertices $A=\{a_1,\ldots,a_k\}$ and $B=\{b_1,\ldots,b_k\}$, 
and adding all edges between $A$ and $B$.
Then, for every vertex $v\in A\cup B$, we introduce three new vertices
$p_v,q_v,r_v$ and add the edges $vp_v,p_vq_v,q_vr_v$, and $r_vp_v$.
Note that vertices $v,p_v,q_v,r_v$ induce a copy of $\mathsf{Paw}$.

First, we observe that $\treepi_{K_3}(G_k)= 1$.
Indeed, consider the tree decomposition with one central bag $A\cup B$, and, for every $v\in A\cup B$, one leaf bag $\{v,p_v,q_v,r_v\}$ adjacent to the central bag.
The central bag intersects no triangle, and each leaf bag intersects only the ``private'' triangle $p_vq_vr_v$. Hence every bag intersects at most one triangle.

We now show that $\treepi_{\mathsf{Paw}}(G_k)\geq k$.
Consider an arbitrary tree decomposition of $G_k$.
Restricting attention to the complete bipartite graph $G_k[A\cup B]$, the standard subtree argument shows that some bag contains all vertices of $A$ or all vertices of $B$.
By symmetry, assume the former.
Then this bag intersects $k$ induced and pairwise anticomplete paws, one for each vertex $v \in A$.
This shows that $\treepi_{\mathsf{Paw}}(G_k)\geq k$.

The same construction can be used to show that the assumption on $\cH$ in \Cref{lem:structural-package}~(\ref{sprop:indminors}) cannot be omitted.
Indeed, let $G'_k$ be the graph obtained from $G_k$ by contracting all edges of the form $vp_v$, for every $v\in A\cup B$.
Clearly, $G'_k$ is an induced minor of $G_k$.
For each original vertex $v\in A\cup B$, the contracted image of $\{v,p_v,q_v,r_v\}$ induces a triangle in $G'_k$.
Therefore, the same subtree argument applied to the contracted bipartite core of $G'_k$ shows that some bag intersects $k$ pairwise anticomplete triangles.
Hence $\treepi_{K_3}(G'_k)\geq k$.

%% file: sec-containers.tex
In this section we show two algorithms that, given a graph $G$ and a set $B \subseteq V(G)$ with $\pi_{\cH}(G,B) \leq k$, compute a family of induced subgraphs of $G$ that are $(B,\cH)$-clean and preserve a solution.
In the first lemma, the solution is a (maximum-weight) independent set, while in the second lemma it is a (list) 3-coloring.

\begin{lemma}[Container lemma for independent sets]\label{thm:H-clean-containers-is}
Let $\cH \neq \emptyset$ be a fixed finite family of graphs.
For every fixed integer $k$, there is an algorithm which,
given an $n$-vertex graph $G$ and a set $B\subseteq V(G)$ satisfying $\pi_{\cH}(G,B)\leq k$,
in time $n^{\Oh(\log n)}$ computes a family $\cF_\cH(G,B)$
of size $n^{\Oh(\log n)}$ such that the following hold:
\begin{enumerate}[\rm (i)]
    \item each member of $\cF_\cH(G,B)$ is a pair $(I,F)$, where $I$ is an independent set and $F \subseteq V(G) \setminus N[I]$,
    \item for every $(I,F) \in \cF_\cH(G,B)$, the subgraph $G[F]$ is
    $(B \cap F,\cH)$-clean, that is, $G[F]$ has no induced copy of a member of
    $\cH$ intersecting $B \cap F$, and
    \item for every independent set $I^*$ in $G$ there is $(I,F) \in \cF_\cH(G,B)$
    such that $I \subseteq I^* \subseteq I \cup F$.
\end{enumerate}
\end{lemma}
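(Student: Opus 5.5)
We build the family $\cF_\cH(G,B)$ with a branching procedure, following the overview. A node of the recursion tree is a pair $(I,F)$. Here $I$ is an independent set of vertices guessed to lie in $I^*$, and $F\subseteq V(G)\setminus N[I]$ is the set of vertices that are still undecided. We start with $(\emptyset,V(G))$. At a node $(I,F)$ we check by brute force, in time $n^{\Oh(1)}$ since $\cH$ is finite and fixed, whether $G[F]$ contains an induced copy of a member of $\cH$ that intersects $B\cap F$.
\begin{itemize}
\item If no such copy exists, we output $(I,F)$ as a leaf.
\item Otherwise we pick such a copy $X$ and a suitable vertex $x\in X$, and branch into two children: $(I\cup\{x\},F\setminus N[x])$ and $(I,F\setminus\{x\})$.
\end{itemize}
Properties (i) and (ii) are immediate. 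Property (iii) follows by induction along the branch consistent with $I^*$: at each node, either $x\in I^*$ (take the first child) or $x\notin I^*$ (take the second). In both cases $I\subseteq I^*\subseteq I\cup F$ is preserved.

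The measure of progress is the following. Let $h=\max_{H\in\cH}|V(H)|$ and let $p=\pi_\cH(G[F],B\cap F)\le k$. This quantity is monotone: it never increases as $F$ shrinks. Let $\cU_F$ be the family of vertex sets of $(B\cap F)$-rooted induced $\cH$-packings of size exactly $p$ in $G[F]$. Each member has at most $kh$ vertices, so $|\cU_F|\le n^{kh}$.

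The key observation is about the copy $X$ chosen at a node. For every $U\in\cU_F$, the set $N[X]\cap F$ intersects $U$, since otherwise $X$ together with the packing of $U$ would be a rooted packing of size $p+1$. By pigeonhole, some $x\in X$ has $N[x]$ meeting at least a $1/h$ fraction of $\cU_F$, and this is the vertex we branch on. In the branch $x\in I$ we delete $N[x]$, so every $U$ met by $N[x]$ disappears from $\cU$. Hence either $p$ drops, or $|\cU|$ shrinks by the factor $(1-1/h)$. In the branch $x\notin I$ we delete only $x$, so $p$ does not increase and $\cU$ does not grow: when $p$ is unchanged, $\cU_{F\setminus\{x\}}\subseteq\cU_F$.

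Call a step a success if we take the branch $x\in I$ and a failure otherwise. We bound the number of leaves as follows.
\begin{itemize}
\item For a fixed value of $p$, the number of successes along a root-to-leaf path is at most $\log_{h/(h-1)} n^{kh}=\Oh(k h^2\log n)$. The value of $p$ takes at most $k+1$ values, so the total number of successes on any path is $\Oh(k^2h^2\log n)=\Oh(\log n)$.
\item Every failure deletes a vertex, so a path has at most $n$ failures.
\item A leaf is determined by the positions of its successes within a path of length at most $n+\Oh(\log n)$. The number of leaves is therefore at most $\binom{n+\Oh(\log n)}{\Oh(\log n)}=n^{\Oh(\log n)}$.
\end{itemize}
Each node costs polynomial time, provided we do not need to compute $\cU_F$ explicitly. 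Computing it takes $n^{\Oh(kh)}$ time, which is polynomial for fixed $k$ and $\cH$. This gives both the $n^{\Oh(\log n)}$ running time and the $n^{\Oh(\log n)}$ bound on the size of the family.

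The main obstacle I expect is this counting step. A failure gives no progress on $\cU$, so we must check that it cannot increase the measure: $p$ and $\cU_F$ restricted to packings avoiding $x$ are both monotone. We must also handle the moment when $p$ drops, at which point $\cU$ is redefined and may become larger. That is why we bound successes separately for each of the at most $k+1$ values of $p$. Finally, the hypothesis $\pi_\cH(G,B)\le k$ is used only to guarantee $|\cU|\le n^{kh}$ and at most $k+1$ phases.
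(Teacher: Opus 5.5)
Your proof is correct and takes essentially the same approach as the paper. In both, you branch on a vertex $x$ of a rooted copy $X$ whose closed neighborhood meets at least a $1/h$ fraction of the maximum rooted packings, and you use the number of such packings as the potential. The differences are only bookkeeping: the paper handles drops of $p$ by an outer induction on $k$ that composes the container families, and it takes $X$ from a maximum packing so that the branch $x\notin I^*$ lowers the potential by one; you instead use phases of $p$ within one branching tree and bound the $x\notin I^*$ branches by $n$, counting leaves by their success positions (valid because leaves form a prefix-free set).
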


\begin{proof}
We proceed by induction on $k$.
If $k=0$, then $\pi_{\cH}(G,B) = 0$ implies that $G$ is $(B,\cH)$-clean.
In this case, the one-element family containing the pair $(\emptyset,V(G))$ has the desired properties.
Thus we may assume that $k\geq 1$ and the lemma holds for all smaller values of $k$.

The following claim encapsulates the main recursive step of the algorithm.

\begin{claim}\label{clm:contained-recursion}
In time $n^{\Oh(\log n)}$, one can compute a family $\cF_k$ of $n^{\Oh(\log n)}$ pairs $(I,F)$,
where $I$ is an independent set and $F \subseteq V(G) \setminus N[I]$,
such that:
\begin{itemize}
    \item for every $(I,F) \in \cF_k$, it holds that $\pi_{\cH}(G[F],B \cap F) \leq k-1$, and
    \item for every independent set $I^*$ of $G$ there is $(I,F) \in \cF_k$ such that $I \subseteq I^* \subseteq I \cup F$.
\end{itemize}
\end{claim}
\begin{claimproof}
Let $h$ be the number of vertices in a largest member of $\cH$.
We can imagine the procedure as a recursive process building a tree $\mathsf{T}$ whose nodes are labeled by pairs $(I,F)$, where $I$ is an independent set and $F \subseteq V(G) \setminus N[I]$.

The process is essentially a branching algorithm that guesses some vertices to be included in the independent set $I^*$. The elements of $I$ are selected to be included in $I^*$, while the elements of $F$ are candidates for inclusion in $I^*$.

We start with a tree containing a single node, labeled by $(\emptyset,V(G))$.
Now, we proceed as follows.
Consider a node of $\mathsf{T}$ that has not yet been processed, let it be labeled by $(I,F)$.
Define $B' = B \cap F$.
If $\pi_{\cH}(G[F],B') \leq k-1$, then we do not process this node any further, it will be a leaf of the tree.
Thus, suppose that $\pi_{\cH}(G[F],B') = k$.

Let $\cU(F)$ be the family of all $B'$-rooted induced $\cH$-packings in $G[F]$ of size $k$.
Each member of $\cU(F)$ uses at most $kh$ vertices in total, so $|\cU(F)| \leq n^{kh}$.
In particular, $|\cU(F)|$ is polynomial in $n$ because both $k$ and $h$ are constant.
On the other hand, $\cU(F) \neq \emptyset$ by the assumption on $G[F]$.

Let $U$ be a member of $\cU(F)$, and let $X$ be the vertex set of one member of this packing, i.e., $G[X]$ is an induced copy of a graph in $\cH$ intersecting $B'$.
Note that $N[X]$ intersects the vertex set of every packing in $\cU(F)$.
Indeed, otherwise we could add $X$ to a packing in $\cU(F)$ that is disjoint from $N[X]$, obtaining a $B'$-rooted induced $\cH$-packing of size $k+1$.
As $|X| \leq h$, there is a vertex $x \in X$ such that $N[x]$ intersects at least $|\cU(F)|/h$ members of $\cU(F)$. 

We create two nodes in the recursion tree as children of the currently processed node:
one will be labeled with $(I \cup \{x\}, F-N[x])$ and the other with $(I, F-\{x\})$.
Note that this corresponds to a natural branching whether we include $x$ in the constructed independent set or not.

We continue this process exhaustively until all leaves of $\mathsf{T}$ are labeled with pairs $(I,F)$ where $\pi_{\cH}(G[F],B \cap F) \leq k-1$. We include these pairs in the family $\cF_k$.

The second property of $\cF_k$ follows from the exhaustive branching process: for every independent set $I^*$ compatible with the current node, if $x \in I^*$ we move to the child $(I \cup \{x\},F-N[x])$, and otherwise we move to the child $(I,F-\{x\})$.

Now let us argue about the size of $\cF_k$.
Note that if a node is labeled with $(I,F)$ and its child node is labeled with $(I',F')$, then $F' \subseteq F$ and hence, $\cU(F') \subseteq \cU(F)$.
Thus, we can use the size of $\cU(F)$ as a potential of the current node.
Consequently, the number of leaves in the recursion tree is bounded by the following inequality
\[
    F(\mu) \leq F(\mu - 1) + F(\lfloor (1-1/h)\mu \rfloor).
\]
This is solved by $F(\mu) = \mu^{\Oh(\log \mu)}$. As $\mu \leq n^{kh}$, we obtain that the number of leaves in $\mathsf{T}$, and thus the size of $\cF_k$, is bounded by $n^{\Oh(\log n)}$.

Finally, note that all local operations, including testing whether $\pi_{\cH}(G[F],B\cap F) \leq k-1$ and computing the family $\cU(F)$, can be performed in time $n^{\Oh(kh)}$, which is polynomial because $k$ and $h$ are constant.
Consequently, the total running time of the algorithm is $n^{\Oh(\log n)}$, as claimed.
\end{claimproof}

Now, apply \cref{clm:contained-recursion} to compute the family $\cF_k$.
For every $(I,F) \in \cF_k$, recursively compute the family $\cF_{(I,F)} := \cF_{\cH}(G[F],B \cap F)$ of pairs $(I',F')$, where
$I' \subseteq F$ is an independent set and $F' \subseteq F \setminus N[I']$ is a $(B\cap F,\cH)$-clean container in $G[F]$.
Finally, we set
\[
\cF_\cH(G,B)=\bigcup_{(I,F) \in \cF_k} \bigcup_{(I',F') \in \cF_{(I,F)}}  (I \cup I',F').
\]

For every resulting pair $(I \cup I',F')$, the set $I \cup I'$ is independent and $F' \subseteq V(G) \setminus N[I \cup I']$.
The running time and the size of the computed family are upper-bounded by $n^{\Oh(\log n)}$ (where the dependence on $k$ and $\cH$ is hidden in the $\Oh(\cdot)$-notation).
The last property of $\cF_\cH(G,B)$ follows from the second property of $\cF_k$ and the induction hypothesis.
This completes the proof.
\end{proof}

\begin{lemma}[Container lemma for list 3-colorings]\label{thm:H-clean-containers-coloring}
Let $\cH \neq \emptyset$ be a fixed finite family of graphs.
For every fixed integer $k$, there is an algorithm which,
given an $n$-vertex graph $G$ with list function $L : V(G) \to 2^{\{1,2,3\}}$ and a set $B\subseteq V(G)$ satisfying $\pi_{\cH}(G,B)\leq k$,
in time $n^{\Oh(\log n)}$ either detects that $(G,L)$ is a no-instance or computes a family $\cF_\cH(G,B)$
of size $n^{\Oh(\log n)}$ such that the following hold:
\begin{enumerate}[\rm (i)]
    \item each member of $\cF_\cH(G,B)$ is a pair $(G',L')$, where $G'$ is an induced subgraph of $G$ and $L' : V(G') \to 2^{\{1,2,3\}}$ is a list function,
    \item for every $(G',L') \in \cF_\cH(G,B)$, the subgraph $G'$ is
    $(B \cap V(G'),\cH)$-clean,
    \item $(G,L)$ admits a list 3-coloring, if and only if there is $(G',L') \in \cF_\cH(G,B)$ that admits a list 3-coloring.
\end{enumerate}
\end{lemma}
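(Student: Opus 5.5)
We mirror the proof of \cref{thm:H-clean-containers-is}, replacing the binary branching \emph{``$x\in I^*$ or $x\notin I^*$''} by a ternary branching on the colour of a chosen vertex. The argument is an induction on $k$. For $k=0$ the graph is already $(B,\cH)$-clean, so we output $\{(G,L)\}$. For $k\ge 1$ we prove an analogue of \cref{clm:contained-recursion}: in time $n^{\Oh(\log n)}$ we compute a family $\cF_k$ of $n^{\Oh(\log n)}$ instances $(G',L')$, where $G'$ is an induced subgraph of $G$, such that $\pi_\cH(G',B\cap V(G'))\le k-1$ and $(G,L)$ is a yes-instance if and only if some member of $\cF_k$ is. We then apply induction to each member with the set $B\cap V(G')$ and take the union of the resulting families. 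Composing the two equivalences gives property (iii), and the bounds multiply to $n^{\Oh(\log n)}$ for fixed $k$.

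To build $\cF_k$ we grow a recursion tree whose nodes carry instances $(G',L')$. At a node, we first exhaustively apply the standard propagation rule: whenever a vertex $v$ has $|L'(v)|=1$, say $L'(v)=\{c\}$, we delete $c$ from the lists of all neighbours of $v$ and then delete $v$ from the graph. This is safe because $v$ has no other constraints on its colour, and removing $v$ only removes vertices, so $G'$ stays an induced subgraph of $G$. If some list becomes empty, we discard the node, since it is a no-instance. If every node is discarded, we report that $(G,L)$ is a no-instance. If $\pi_\cH(G',B')\le k-1$ with $B'=B\cap V(G')$, the node becomes a leaf and its instance goes into $\cF_k$.

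Otherwise we let $\cU$ be the family of all $B'$-rooted induced $\cH$-packings of size $k$ in $G'$, so $|\cU|\le n^{kh}$. We fix one copy $X$ of a member of $\cH$ in some packing. Exactly as before, $N[X]$ meets every member of $\cU$, since otherwise we could extend a packing to size $k+1$. Hence some $x\in X$ has $N[x]$ meeting at least $|\cU|/h$ members of $\cU$. We branch on the colour $c\in L'(x)$ of $x$, creating at most three children in which $L'(x)$ is set to $\{c\}$ and propagation is applied. This branching preserves yes-instances by exhaustiveness.

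The main obstacle is the potential analysis, and it differs from the independent set case. There, one branch removed $N[x]$ outright. Here, assigning $x$ a colour removes $x$ but only shrinks the lists of its neighbours, so packings meeting $N(x)\setminus\{x\}$ need not be destroyed immediately. To handle this, we use the potential $\mu=|\cU|$ together with the total list size $\lambda=\sum_v |L'(v)|\le 3n$. We argue that, after propagation, assigning $x$ colour $c$ deletes every neighbour whose list was $\{c\}$ or becomes a singleton. Then we do a finer case analysis:
\begin{itemize}
\item If a neighbour $y\in N(x)$ lies on many packings of $\cU$ and $|L'(y)|\ge 2$, we branch instead on the colour of $y$. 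Every neighbour on such packings has a list of size $2$ or $3$.
\item If a vertex is \emph{heavy}, meaning it lies in at least $|\cU|/h$ packings, we branch on its colour. In each branch either the vertex is deleted, which removes those packings, or its list shrinks and $\lambda$ drops.
\end{itemize}
Since $N[x]$ meets a $1/h$ fraction of $\cU$ and $|N[x]\cap U|\le kh$, some single vertex $y\in N[x]$ meets at least a $1/(h\cdot n)$ fraction. This alone is too weak, so the first thing to try is to branch on the \emph{colour of $x$} and, within each branch, on whether each heavy neighbour ends up with the same colour set.

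We expect the cleanest route to be a $2$-way branch for each heavy vertex $y$ with $|L'(y)|\ge 2$ and colour $c\in L'(y)$: either $y$ gets colour $c$, so $y$ is deleted after propagation and the packings through $y$ disappear, or $c$ is removed from $L'(y)$. In the second branch the list size drops, and after at most two such drops $y$ becomes a singleton and is deleted. This gives the recurrence $T(\mu)\le 3\,T(\mu-1)+T((1-1/h')\mu)$ with $h'=\Oh(h)$, hence $n^{\Oh(\log n)}$ leaves. Making precise that some vertex of $N[X]$, obtained by choosing $x$ suitably, is heavy in this sense is the step we expect to require the most care.
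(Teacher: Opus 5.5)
\textbf{There is a genuine gap.} Your outer structure is fine: induction on $k$, an analogue of \cref{clm:contained-recursion}, singleton propagation, and choosing $x$ so that $N[x]$ meets a $1/h$ fraction of $\mathcal{U}$. The argument breaks exactly at the step you flag, and that step is not merely delicate but false: a vertex lying in a constant fraction of the packings of $\mathcal{U}$ need not exist.

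Take $\mathcal{H}=\{P_3\}$, $k=1$, full lists, and a clique $b_1,\dots,b_m$ with a pendant vertex $p_j$ at each $b_j$, where $B=\{b_1,\dots,b_m\}$. The induced $P_3$'s are exactly the $m(m-1)$ paths $p_j b_j b_l$. Any two of them touch, so $\pi_{\mathcal{H}}(G,B)=1$, yet every vertex lies in at most $2(m-1)$ of them. Colouring, say, $b_1$ destroys only those $2(m-1)$ packings, and no list becomes a singleton, so nothing propagates.

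The underlying problem is the potential $\mu=|\mathcal{U}|$. Fixing a colour only destroys packings through the coloured vertex itself. Its effect on $N(x)$, which is where the $1/h$ fraction lives, is invisible to $\mu$. For the same reason your $T(\mu-1)$ terms are unjustified, since deleting a colour from $L'(y)$ destroys no packing. Likewise, a three-way branch on the colour of $x$, where every child loses only the packings through $x$, gives exponentially many leaves.

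\textbf{The missing idea is a potential that registers list shrinkage.} The paper counts \emph{coloured} packings: the family $\mathcal{U}^*$ of pairs $(U,\phi)$ with $U\in\mathcal{U}$ and $\phi$ a list colouring of $G'[U]$. There are at most $3^{kh}n^{kh}$ of them, and the family can only shrink when passing to a child, since children are induced subgraphs with smaller lists.

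To make every relevant colour deletion count, the paper adds a local-consistency rule, applied exhaustively at every node. For every $S$ with $|S|\le kh$, reject if $G'[S]$ has no list colouring, and delete $i$ from $L'(v)$ if $G'[S]$ has no list colouring with $v$ coloured $i$. This guarantees that for all $U\in\mathcal{U}$, $v\in U$ and $i\in L'(v)$, some $(U,\phi)\in\mathcal{U}^*$ has $\phi(v)=i$.

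The branching step then goes as follows:
\begin{enumerate}
\item For each of the at least $|\mathcal{U}|/h$ packings meeting $N[x]$, pick a vertex $y_U\in U\cap N[x]$.
\item By pigeonhole over the four lists of size at least $2$, at least $|\mathcal{U}|/(4h)$ of these vertices share a list $A$.
\item $A\cap L'(x)\neq\emptyset$, since both sets have size at least $2$; choose $i$ in it.
\item Branch only two ways. Colouring $x$ with $i$ destroys $(U,\phi_U)$ with $\phi_U(y_U)=i$ for all these $U$, i.e., at least a $1/(4h\cdot 3^{kh})$ fraction of $\mathcal{U}^*$. Deleting $i$ from $L'(x)$ destroys at least one coloured packing.
\end{enumerate}
This gives the recurrence of \cref{clm:contained-recursion} and hence $n^{\Oh(\log n)}$ leaves.
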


Essentially, the idea of the proof of \cref{thm:H-clean-containers-coloring} is to modify the proof of \cref{thm:H-clean-containers-is} in the same way as a quasipolynomial-time algorithm for \lcol{3} in $P_t$-free graphs is obtained by modifying a quasipolynomial-time algorithm for \MWIS in the same class of graphs~\cite{pilipczuk2021quasi}.
Thus, we will only sketch it.

\begin{proof}[Sketch of proof.]
    Let $h$ be the number of vertices in a largest member of $\cH$.
    First of all, if there is a vertex with empty list, we reject the instance as it has no solution.
    Second, if there is a vertex $v$ with a singleton list, say $L(v) = \{i\}$, we remove $v$ from the graph and remove the color $i$  from the lists of its neighbors, obtaining an equivalent instance.
    
    Third, we exhaustively enumerate all induced subgraphs of $G$
    with at most $kh$ vertices, together with their list 3-colorings respecting $L$.
    If some induced subgraph of $G$ has no coloring, we reject the instance as it has no solution.
    Furthermore, if there is $S \subseteq V(G)$ of size at most $kh$, a vertex $v \in S$, and $i \in L(v)$,
    such that $G[S]$ has no list 3-coloring using lists $L$ with $v$ colored with $i$, we remove color $i$ from the list of $v$.

    We apply the above three steps exhaustively until no further changes are possible.
    This can be done in polynomial time as $k$ and $h$ are constant, and every step reduces the total size of lists.

    Then, we proceed similarly as in the proof of \cref{thm:H-clean-containers-is}.
    The crucial step is the recursive construction of a tree $\mathsf{T}$, analogous to the one in 
    \cref{clm:contained-recursion}.
    This time, nodes of $\mathsf{T}$ are labeled with pairs $(G',L')$, where $G'$ is an induced subgraph of $G$ and $L' : V(G') \to 2^{\{1,2,3\}}$ is a list function.
    Whenever a child node is created, we first apply the three reduction steps above exhaustively to its instance before processing that node further.

    Again, suppose that we are processing a node labeled with $(G',L')$, where, for $B' = B \cap V(G')$, we have $\pi_{\cH}(G',B') = k$.
    Let $\cU(G')$ be the family of all $B'$-rooted induced $\cH$-packings in $G'$ of size $k$.
    Let $\cU^*(G')$ be the family of colored elements of $\cU(G')$, i.e., pairs $(U,\phi)$, where $U \in \cU(G')$ and $\phi$ is a list 3-coloring of $G'[U]$ using lists $L'$.
    Note that
    $
|\cU^*(G')| \leq |\cU(G')|  \cdot 3^{kh} \leq n^{kh} \cdot 3^{kh} =  n^{\Oh(1)}
    $
    and it can be computed in polynomial time as $k$ and $h$ are constant.
    Furthermore, because we exhaustively apply the reduction rules at every node,
    we know that for every $U \in \cU(G')$, every $v \in U$, and every color $i \in L'(v)$,
    there is a member $(U,\phi) \in \cU^*(G')$ such that $\phi(v) = i$.

    Recall that in the proof of \cref{clm:contained-recursion}, we showed that if we choose any packing $U \in \cU(G')$ and then any member of $U$, there is a vertex $x$ of that member such that $N[x]$ intersects at least $|\cU(G')|/h$ members of $\cU(G')$.
    Let $\mathcal{P}$ be the set of packings in $\cU(G')$ whose vertex set intersects $N[x]$; then $|\mathcal{P}| \geq |\cU(G')|/h$.
    For each packing $U \in \mathcal{P}$, choose one vertex $y_U \in V(U) \cap N[x]$.
    Since every current list has size at least 2, each $L'(y_U)$ is one of the four sets $\{1,2\}, \{1,3\}, \{2,3\}, \{1,2,3\}$.
    Hence, for some set $A \subseteq \{1,2,3\}$ with $|A| \geq 2$, there is a subfamily $\mathcal{P}' \subseteq \mathcal{P}$ of size at least $|\mathcal{P}|/4$ such that $L'(y_U)=A$ for every $U \in \mathcal{P}'$.
    Since both $A$ and $L'(x)$ have size at least 2, we may choose a color $i \in A \cap L'(x)$.

    We branch on whether we color $x$ with color $i$ or not.
    If we decide not to color $x$ with color $i$, we remove $i$ from the list of $x$; this is the analog of not including $x$ in the independent set in the proof of \cref{clm:contained-recursion}.
    If we decide to color $x$ with color $i$, we remove $x$ from the graph and remove color $i$ from the lists of its neighbors; this is the analog of including $x$ in the independent set in the proof of \cref{clm:contained-recursion}. 

    We measure the progress of the branching by the size of $\cU^*(G')$.    
    In the first case, if we remove color $i$ from the list of $x$, the size of $\cU^*(G')$ decreases by at least 1 (as $x$ belongs to a member of $\cU(G')$ and there is a coloring in $\cU^*(G')$ that uses color $i$ for $x$).
    
    In the second case, if we color $x$ with color $i$, then for every $U \in \mathcal{P}'$ the vertex $y_U$ either disappears (if $y_U=x$) or loses the color $i$ from its list.
    By the reduction invariant above, for every $U \in \mathcal{P}'$ there exists a coloring $\phi_U$ of $G'[U]$ such that $\phi_U(y_U)=i$.
    Hence each pair $(U,\phi_U)$ is destroyed in this branch, so the size of $\cU^*(G')$ decreases by at least $|\mathcal{P}'|$.
    Therefore, the size of $\cU^*(G')$ decreases by at least
    \[
        |\mathcal{P}'| \geq |\mathcal{P}|/4 \geq |\cU(G')|/(4h) \geq |\cU^*(G')|/(4h \cdot 3^{kh}),
    \]
    that is, by a constant fraction.

    As $|\cU^*(G')| = n^{\Oh(1)}$, the number of leaves in the recursion tree is bounded by $n^{\Oh(\log n)}$.
    The rest of the proof is analogous to the proof of \cref{thm:H-clean-containers-is}. 
\end{proof}

%% file: sec-mwisp4.tex
In this section we prove \cref{thm:mwisp4}.
\thmmwispfour*

\subsection{Dealing with $(B,P_4)$-clean instances}

Let us start with $(B,P_4)$-clean graphs, i.e., graphs that do not contain an induced 4-vertex path intersecting a given set $B$ of vertices. Note that such graphs arise as the output of \cref{thm:H-clean-containers-is} applied to $\cH = \{P_4\}$.

\paragraph{Modules and modular decomposition}
Let $G$ be a graph.
A nonempty set $M\subseteq V(G)$ is a \emph{module} of $G$ if every vertex of
$V(G)\setminus M$ is either complete to $M$ or anticomplete to $M$.
A module is \emph{trivial} if it is a singleton, or all of $V(G)$,
otherwise it is \emph{nontrivial}.
A graph is \emph{prime} if it has no nontrivial modules.

Let $\cM = \{M_1,\ldots,M_r\}$ be a partition of $V(G)$ into nonempty modules.
The \emph{quotient graph} of $G$ with respect to $\cM$, denoted $G/\cM$, is the graph with vertex set $\{q_1,\ldots,q_r\}$, where $q_i$ corresponds to the module $M_i$, and where $q_iq_j\in E(G/\cM)$ if and only if $M_i$ is complete to $M_j$ in $G$.

The theory of modules is quite rich, and we refer the reader to the survey of Habib and Paul~\cite{habib2010survey} for a comprehensive overview. We will only need a few basic facts about modules and modular decomposition, which we summarize in the following lemma.

\begin{lemma}[Modular decomposition tree]\label{lem:modulardecomposition}
    Given a graph $G$, in linear time one can compute a rooted tree $\mathsf{T}$, called the \emph{modular decomposition tree} of $G$, whose nodes correspond to nonempty modules of $G$, such that:
    \begin{enumerate}
        \item the root of $\mathsf{T}$ corresponds to the module $V(G)$, and the leaves of $\mathsf{T}$ correspond to the singleton modules of $G$,
        \item for every non-leaf node $M$ of $\mathsf{T}$, the labels of its children form a partition of the vertex set of $G[M]$ into nonempty modules, and
        \item for every non-leaf node $M$ of $\mathsf{T}$, the quotient graph corresponding to the partition of $G[M]$ given by its children is either complete, edgeless, or prime.
    \end{enumerate}
\end{lemma}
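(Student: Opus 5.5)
This lemma collects standard facts about modular decomposition, so I would rely on the classical theory (Gallai's theorem, as surveyed by Habib and Paul) and cite the linear-time algorithms rather than reprove them. The construction is top-down and recursive. For a set $M$ with $|M|\ge 2$, I distinguish three cases:
\begin{itemize}
\item if $G[M]$ is disconnected, the children of $M$ are its connected components, and the quotient graph is edgeless;
\item if the complement of $G[M]$ is disconnected, the children are the co-components, and the quotient graph is complete;
\item otherwise, the children are the maximal proper submodules of $G[M]$.
\end{itemize}
We then recurse on each child until we reach singletons. This gives property (1) immediately, with $V(G)$ at the root and singletons at the leaves.

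The key facts to verify, in order, are as follows.

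First, modules of $G[M]$, for $M$ a module of $G$, are modules of $G$. Indeed, a vertex outside $M$ sees all of $M$ or none of it, hence all or none of any subset. By induction, every node of the tree is therefore a module of $G$.

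Second, in the first two cases the children are modules. Each component of $G[M]$ is anticomplete to the other components, so it is a module; dually, each co-component is complete to the other co-components.

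Third, in the case where both $G[M]$ and its complement are connected, the maximal proper submodules are pairwise disjoint. This is the standard overlap argument: if two modules $M_1,M_2$ overlap, then $M_1\cup M_2$ is also a module. If $M_1\cup M_2\neq M$, this contradicts maximality. If $M_1\cup M_2=M$, one shows that $M\setminus M_1$ and $M\setminus M_2$ are modules, and that every vertex of $M_1\setminus M_2$ is either complete or anticomplete to $M_2\setminus M_1$ in a way that disconnects $G[M]$ or its complement, a contradiction. Since every singleton is a proper module, the maximal ones cover $M$, so the children form a partition of $M$ into modules, giving property (2).

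Fourth, I would show that in the third case the quotient is prime. A nontrivial module of the quotient lifts to a union of children that is a proper module of $G[M]$ strictly containing some child, contradicting maximality. The quotient also has at least two vertices. I expect this fourth step, together with the disjointness argument in the third step, to be the main (though classical) obstacle. In the first two cases the quotient is edgeless or complete by construction, which completes property (3).

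For the running time I would not reprove anything. I would invoke the known linear-time modular decomposition algorithms (McConnell–Spinrad, or Tedder–Corneil–Habib–Paul), as surveyed by Habib and Paul. A naive implementation of the recursion above is already polynomial, which is all that the $n^{\Oh(\log^2 n)}$ bound of \cref{thm:mwisp4} requires.
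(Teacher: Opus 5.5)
Your proposal is correct and matches the paper, which gives no proof of this lemma: it treats it as a standard fact from the literature, pointing to the survey of Habib and Paul, exactly as you do for the linear-time claim. Your extra sketch of the structural part is the classical Gallai argument and is sound: the split into components, co-components, or maximal proper submodules, the overlap argument in the case where $G[M]$ and its complement are both connected, and the lifting of a nontrivial quotient module to a larger proper module.
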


Let us emphasize that for a graph $G$ and a partition of $V(G)$ into nonempty modules, the quotient graph is an induced subgraph of $G$.
Indeed, picking one vertex from each module gives an induced subgraph of $G$ isomorphic to the quotient graph.
Thus, if $G$ is, say, $H$-free for some graph $H$, then so is the quotient graph.

\paragraph{Using the modular decomposition tree to solve \MWIS}
It is well known that modular decomposition can be used to reduce solving various problems to the case of prime graphs~\cite{habib2010survey}. Let us discuss it for the \MWIS problem.
Let $(G,\wei)$ be a vertex-weighted graph and let $\cM = \{M_1,\ldots,M_r\}$ be a partition of $V(G)$ into nonempty modules. Let $G/\cM$ be the corresponding quotient graph and, as before, denote the vertex of $G/\cM$ corresponding to $M_i$ by $q_i$.
Recursively, we compute a maximum-weight independent set $I_i$ in each $G[M_i]$.
Then, we define the weight function $\wei'$ on vertices of $G/\cM$ by setting $\wei'(q_i) = \wei(I_i)$ for each $i\in[r]$.
Independent sets of $G/\cM$ correspond to independent sets of $G$ that, for every $i \in [r]$, either avoid $M_i$ or use an independent set contained in $M_i$.
With the weights $\wei'$, choosing $q_i$ in the quotient graph corresponds to taking the precomputed set $I_i$ in $G[M_i]$.

This way, we can solve \MWIS in $G$ by traversing the modular decomposition tree in a bottom-up fashion and solving the problem in the quotient graph of each non-leaf node.
As the size of the modular decomposition tree is linear in the size of $G$, the total computation time is polynomial times the time needed to solve \MWIS in quotient graphs.

We remark that if the quotient graph is complete or edgeless, then the task of solving \MWIS in $G/\cM$ is trivial. Thus, the only non-trivial case left is when $G/\cM$ is prime.

The reasoning above is encapsulated in the following lemma, see e.g.,~\cite[Theorem 1]{lozin2008polynomial} for the proof.

\begin{lemma}\label{lem:modular-mwis}
Let $(G,\wei)$ be a vertex-weighted graph with $n$ vertices.
Suppose that there is an algorithm that,
given a prime induced subgraph $G'$ of $G$
and any weight function $\wei'$ on $G'$, solves \MWIS on $(G',\wei')$ in time $T(n)$.
Then \MWIS on $(G,\wei)$ can be solved in time $T(n)\cdot n^{\Oh(1)}$.
\end{lemma}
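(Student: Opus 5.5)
The plan is to follow the standard bottom-up dynamic programming along the modular decomposition tree of \cref{lem:modulardecomposition}. Compute the tree $\mathsf{T}$ of $G$ in linear time, and process its nodes from the leaves to the root. For every node $M$ we store a maximum-weight independent set $I_M$ of $G[M]$. For a leaf $M=\{v\}$ we set $I_M=\{v\}$; weights are nonnegative, so this is optimal.

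For a non-leaf node $M$ with children $M_1,\ldots,M_r$, let $\cM$ be this partition of $G[M]$ into modules, and let $Q=G[M]/\cM$. Fix representatives $v_i\in M_i$. Then $Q$ is isomorphic to the induced subgraph $G[\{v_1,\ldots,v_r\}]$ of $G$, as remarked after \cref{lem:modulardecomposition}. Define $\wei'(q_i)=\wei(I_{M_i})$.

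The key correctness step is the following claim: if $J$ is a maximum-weight independent set of $(Q,\wei')$, then $\bigcup_{q_i\in J} I_{M_i}$ is a maximum-weight independent set of $G[M]$.

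\emph{Feasibility.} For distinct $q_i,q_j\in J$, the modules $M_i$ and $M_j$ are not complete to each other. Since they are modules, any two modules in a partition are either complete or anticomplete, so $M_i$ and $M_j$ are anticomplete. Hence the union is independent.

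\emph{Optimality.} Take any independent set $I$ of $G[M]$ and let $J_I=\{q_i : I\cap M_i\neq\emptyset\}$. If $q_i,q_j\in J_I$ were adjacent, then $M_i$ would be complete to $M_j$ and $I$ would contain an edge, a contradiction. So $J_I$ is independent in $Q$. Moreover, $\wei(I\cap M_i)\le \wei(I_{M_i})$ for each $i$. Therefore $\wei(I)\le \wei'(J_I)\le\wei'(J)$.

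It remains to solve \MWIS on $(Q,\wei')$. By \cref{lem:modulardecomposition}, $Q$ is complete, edgeless, or prime.
\begin{itemize}
\item If $Q$ is complete, take the single heaviest vertex.
\item If $Q$ is edgeless, take all vertices.
\item If $Q$ is prime, it is a prime induced subgraph of $G$ via the representatives, so the assumed algorithm solves it in time $T(n)$ with weight function $\wei'$, transported to the representatives.
\end{itemize}

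For the running time: $\mathsf{T}$ has $\Oh(n)$ nodes, since every non-leaf node has at least two children. Each node costs at most $T(n)+n^{\Oh(1)}$ for building $Q$ and combining sets, giving a total of $T(n)\cdot n^{\Oh(1)}$, assuming $T(n)\ge 1$.

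There is no serious obstacle. The only points needing care are the fact that two modules of a partition are either complete or anticomplete, and that the prime quotient is genuinely an induced subgraph of $G$, so that the hypothesis applies to it.
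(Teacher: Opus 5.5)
Your proposal is correct and follows essentially the same route as the paper, which sketches this bottom-up dynamic program over the modular decomposition tree (weighting each quotient vertex by the optimum of its child module, handling complete and edgeless quotients directly and prime quotients via representatives) and defers the details to Lozin and Milani\v{c}. Your explicit checks fill in what the paper leaves implicit: that two modules of a partition are either complete or anticomplete, that the prime quotient is an induced subgraph of $G$, and that the tree has linearly many nodes.
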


\paragraph{Split graphs and modules of $P_4$-free graphs}
A graph is \emph{split} if its vertex set can be partitioned into a clique and an independent 
set. We will make use of the following result. It was shown by Hayward, Hougardy, and Reed~\cite{DBLP:conf/soda/HaywardHR02}, but we present an alternative proof that is more consistent with the terminology of our paper.

\begin{lemma}\label{lem:HHR}
Let $G$ be a prime graph. If $G$ is not split, then every vertex of $G$ belongs to an induced $P_4$.
\end{lemma}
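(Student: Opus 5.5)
We argue by contradiction: suppose $G$ is prime and not split, and some vertex $v$ lies on no induced $P_4$. We will show that $G$ then has a nontrivial module. The key step is to study the components of $G-N[v]$ and their attachments in $N(v)$.

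First, the local structure forced by $v$ avoiding induced $P_4$s. Let $A=N(v)$ and $R=V(G)\setminus N[v]$. We may assume $R\neq\emptyset$; otherwise $v$ is universal and $V(G)\setminus\{v\}$ is a nontrivial module (note $|V(G)|\ge 3$, since graphs on at most $2$ vertices are split). Fix a component $C$ of $G[R]$.
\begin{itemize}
\item If $a\in A$ has a neighbor $c\in C$ and also a non-neighbor $c'\in C$ with $cc'\in E(G)$, then $v\,a\,c\,c'$ is an induced $P_4$ containing $v$, a contradiction. Since $C$ is connected, every $a\in A$ is therefore complete or anticomplete to $C$.
\item If $a,a'\in A$ are non-adjacent, $a$ has a neighbor $c\in R$, and $a'$ is not adjacent to $c$, then $c\,a\,v\,a'$ is an induced $P_4$, a contradiction. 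Hence any two non-adjacent vertices of $A$ have the same neighborhood in $R$.
\end{itemize}
By the first item, $C$ is a module of $G$: its vertices in $R$ outside $C$ are anticomplete to it, $v$ is anticomplete to it, and each vertex of $A$ is complete or anticomplete to it. Primality therefore forces $|C|=1$. So $R$ is an independent set, and each $r\in R$ is determined by $N(r)\subseteq A$.

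Next we extract a module. Let $A_1\subseteq A$ be the set of vertices with at least one neighbor in $R$, and $A_0=A\setminus A_1$.
\begin{itemize}
\item If $a\in A_1$ and $a'\in A$ are non-adjacent, then $N(a')\cap R=N(a)\cap R\neq\emptyset$ by the second item, so $a'\in A_1$.
\item If $a,a'\in A_1$ are non-adjacent, they have a common neighbor $r\in R$. Checking the possible $P_4$s through $v$ suggests that non-adjacent vertices of $A_1$ behave like twins.
\end{itemize}
We would aim to show that $\{v\}\cup A_0$ is a module, or else that $A_1$ is a clique. Certainly $v$ is complete to $A_1$ and anticomplete to $R$.

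If $A_1$ is a clique, then $A_1\cup\{v\}$ is a clique and $R$ is independent. Every vertex of $A_0$ is adjacent to all of $A_1$ by the first bullet above, so the rest of $A_0$ is the only obstruction to splitness. Since $G$ is not split, $A_0$ must contain a non-edge. In that case we expect $\{v\}\cup A_0$ to be a nontrivial module: every vertex of $A_1$ is complete to it, and every vertex of $R$ is anticomplete to it. It is nontrivial because $R\neq\emptyset$ and $|A_0|\ge 2$.

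If $A_1$ is not a clique, we use the twin structure of non-adjacent pairs, i.e. equal $R$-neighborhoods, together with $P_4$-freeness through $v$ inside $A\cup\{v\}$, to find a nontrivial module among the vertices of $A$.

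The main obstacle is this final module-extraction step, in particular handling the adjacency between $A_0$ and $A_1$ and non-cliqueness inside $A_1$. If the direct case analysis gets messy, the fallback is to invoke the standard fact that a vertex lying on no induced $P_4$ of $G$ yields, via the co-component or component of $G-v$ structure, either a nontrivial module or a split partition.
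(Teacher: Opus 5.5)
\textbf{There is a genuine gap: the proposal never proves that $A=N(v)$ is a clique.} Your first step is sound and matches the paper's claim: components of $G[R]$ are modules, hence singletons, so $R$ is independent. Your case where $A_1$ is a clique is also correct. But showing that $A$ is a clique is the other half of the lemma. The case ``$A_1$ is not a clique'' is left as an intention (``use the twin structure \dots\ to find a nontrivial module''). The fallback ``standard fact'' you cite is essentially the statement being proved, so it cannot be invoked. As written, the argument is incomplete.

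The missing idea is that the whole situation is invariant under complementation. $G$ is prime if and only if $\overline{G}$ is, and split if and only if $\overline{G}$ is. Since $P_4$ is self-complementary, $v$ lies on no induced $P_4$ of $\overline{G}$ either. Apply your first step to $\overline{G}$, where the non-neighborhood of $v$ is $A$. This shows $A$ is independent in $\overline{G}$, i.e.\ a clique in $G$. Then $N[v]$ is a clique and $R$ is independent, so $G$ is split, a contradiction. This is exactly how the paper finishes.

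You already had the needed ingredient, since your second bullet (the path $c\,a\,v\,a'$) is the complement of your first bullet. To use it directly, take a component $D$ of $\overline{G}[A]$ (a co-component of $G[A]$):
\begin{itemize}
\item $v$ and $A\setminus D$ are complete to $D$.
\item Suppose some $r\in R$ had both a neighbor and a non-neighbor in $D$. Since $\overline{G}[D]$ is connected, some such pair $a\in N(r)$, $a'\notin N(r)$ is non-adjacent in $G$, and then $r\,a\,v\,a'$ is an induced $P_4$ through $v$.
\item Hence $D$ is a module not containing $v$, so $|D|=1$ by primality, and $A$ is a clique.
\end{itemize}
With this, the $A_0/A_1$ case analysis becomes unnecessary.
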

\input{hhr-proof.tex}

Let us explain why this result is relevant to our setting.
Consider a graph $G$ such that for some $B \subseteq V(G)$, $G$ is $(B,P_4)$-clean;
such a graph can be obtained by \cref{thm:H-clean-containers-is}.
Apply \cref{lem:modulardecomposition} to $G$ to obtain its modular decomposition tree $\mathsf{T}$.
Now, consider a non-leaf node $M$ of $\mathsf{T}$, and let $Q_M$ be the quotient graph corresponding to the partition of $G[M]$ given by its children.
Note that, either $Q_M$ is complete (and thus, split), edgeless (again, split), or prime.
In the last case, applying \cref{lem:HHR} to $Q_M$ shows that if $Q_M$ is not split, then every vertex of $Q_M$ belongs to an induced $P_4$.
However, recall that picking any vertex from each submodule corresponding to a vertex of $Q_M$, we obtain an induced subgraph of $G$ isomorphic to $Q_M$.
Consequently, if $Q_M$ is not split, then no vertex of $M$ can belong to $B$: otherwise, taking such a vertex as the representative of its child module and arbitrary representatives of the other modules of an induced $P_4$ in $Q_M$ would yield a $B$-rooted induced $P_4$ in $G$.

\paragraph{Solving \MWIS in $(B,P_4)$-clean prime graphs}
The following lemma combines the above observations with the modular decomposition tree to show that \MWIS can be solved efficiently in $(B,P_4)$-clean graphs, provided that components of $G-B$ are ``easy.''

\begin{lemma}\label{lem:terminal-p4-solver}
Let $(G,\wei)$ be a vertex-weighted graph and let $B\subseteq V(G)$
such that $G$ is $(B,P_4)$-clean.
Assume that there is an algorithm \textsf{Alg} that, given an $n'$-vertex induced subgraph of $G-B$, in time $T(n')$ computes a maximum-weight independent set in that subgraph.
Then a maximum-weight independent set in $(G,\wei)$ can be computed in time $T(n) \cdot n^{\Oh(1)}$.
\end{lemma}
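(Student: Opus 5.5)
We solve \MWIS by running the modular decomposition of $G$ bottom-up, following the scheme behind \cref{lem:modular-mwis}, but we handle the quotient graphs more carefully. First compute the modular decomposition tree $\mathsf{T}$ of $G$ using \cref{lem:modulardecomposition}. We process its nodes bottom-up, and for each node $M$ we compute a maximum-weight independent set $I_M$ of $G[M]$. Leaves are trivial.

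Consider a non-leaf node $M$ whose children are $M_1,\dots,M_r$. Give the quotient graph $Q_M$ the weights $\wei'(q_i)=\wei(I_{M_i})$. Solving \MWIS on $(Q_M,\wei')$ and taking the union of the chosen sets $I_{M_i}$ gives $I_M$. This is correct for the reason explained before \cref{lem:modular-mwis}. We distinguish cases by the type of $Q_M$.

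\begin{itemize}
\item If $Q_M$ is complete, choose the heaviest single $q_i$.
\item If $Q_M$ is edgeless, choose all $q_i$.
\item If $Q_M$ is prime and split, solve it in polynomial time. A split partition $(K,S)$ can be found in polynomial time, and an independent set contains at most one vertex of $K$. So we enumerate the choice of at most one vertex $c\in K$ and take $c$ together with all of $S\setminus N(c)$.
\end{itemize}

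The remaining case is the heart of the proof: $Q_M$ is prime and not split. By the observation following \cref{lem:HHR}, which combines \cref{lem:HHR} with $(B,P_4)$-cleanness, we then have $M\cap B=\emptyset$. So $G[M]$ is an induced subgraph of $G-B$, and we call \textsf{Alg} on $G[M]$ directly to obtain $I_M$, in time $T(|M|)\le T(n)$. Once we do this, we no longer need the values computed in the subtree below $M$. That wasted work is still only polynomial, or we can skip it by first deciding top-down which nodes are handled by \textsf{Alg}.

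For the running time: $\mathsf{T}$ has $\Oh(n)$ nodes. Each node is either handled in polynomial time or by a single call to \textsf{Alg} on an induced subgraph of $G-B$ with at most $n$ vertices. Assuming $T$ is monotone, the total is $T(n)\cdot n^{\Oh(1)}$.

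The main point requiring care is the representative argument that gives $M\cap B=\emptyset$. Suppose $v\in M\cap B$ lies in the child $M_j$. By \cref{lem:HHR}, $q_j$ lies on an induced $P_4$ in $Q_M$. Choose $v$ as the representative of $M_j$ and arbitrary representatives of the other three modules on that path. Because the $M_i$ are modules of $G[M]$, these four vertices induce a $P_4$ in $G$, and it contains $v\in B$. This contradicts $(B,P_4)$-cleanness.

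A second point is that calling \textsf{Alg} on all of $G[M]$ is legitimate even though \textsf{Alg} is only promised to work on induced subgraphs of $G-B$. This holds precisely because the whole of $M$ avoids $B$, not merely some representatives.
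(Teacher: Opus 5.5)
Your proof is correct and follows essentially the same route as the paper. Both use modular decomposition via \cref{lem:modular-mwis}, solve the split case directly, and use \cref{lem:HHR} together with $(B,P_4)$-cleanness to show that non-split prime pieces avoid $B$. The one difference is where \textsf{Alg} is invoked. You call it on all of $G[M]$ with the original weights, using the stronger fact $M\cap B=\emptyset$. The paper calls it on the prime quotient graph, realized by representatives as an induced subgraph of $G-B$, with the derived weights $\wei'$. Your variant matches the hypothesis on \textsf{Alg} more literally, since it never asks \textsf{Alg} to handle modified weights.
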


\begin{proof}
By \cref{lem:modular-mwis}, it is enough to show how to solve \MWIS in prime induced subgraphs of $G$.
Thus, let $G'$ be such a graph on $n'$ vertices.
Define $B' = B \cap V(G')$. Clearly, $G'$ is $(B',P_4)$-clean.

If $G'$ is a split graph, then we can compute a maximum-weight independent set in $G'$ in polynomial time.
Indeed, in polynomial time we can find a split partition $(C,S)$ of $G'$, where $C$ is a clique and $S$ is an independent set.
Then an independent set contains at most one vertex of $C$, so we can exhaustively guess this choice, remove its neighbors, and include all remaining vertices of $S$.

So, suppose that $G'$ is not split.
By \cref{lem:HHR}, every vertex of $G'$ belongs to an induced $P_4$.
Since $G'$ is $(B',P_4)$-clean, this implies that $B'=\emptyset$, i.e., $G'$ is an induced subgraph of $G-B$.
Thus, we can apply the algorithm \textsf{Alg} to the whole graph $G'$ to compute a maximum-weight independent set in time $T(n')$.

Applying \cref{lem:modular-mwis}, we conclude that we can solve the instance $(G,\wei)$ of \MWIS in time $T(n)\cdot n^{\Oh(1)}$, as claimed.
\end{proof}

\subsection{Wrapping up}
Let us prove \cref{thm:mwisp4} for the case that $\cH = \{P_4\}$.
The proof combines \cref{thm:H-clean-containers-is} and the results developed in this section.

\begin{theorem}\label{thm:mwisp4-special}
For every fixed $k$, given a vertex-weighted graph $(G,\wei)$,
in time $n^{\Oh(\log^2 n)}$ one can either solve \MWIS in $(G,\wei)$,
or correctly report that $\treepi_{P_4}(G)>k$.
\end{theorem}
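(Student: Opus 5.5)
We will design a recursive algorithm $\mathsf{Solve}(G,\wei)$ that never computes a tree decomposition. Instead it searches for a ``central'' set $B$ directly, and it rejects whenever it finds evidence that $\treepi_{P_4}(G)>k$.

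\textbf{Finding a candidate bag.} Suppose $\treepi_{P_4}(G)\le k$. By \cref{lem:balanced-bag} applied to a witnessing decomposition (uniform weights), there is a set $B$ with $\pi_{P_4}(G,B)\le k$ such that every component of $G-B$ has at most $n/2$ vertices. We cannot afford to guess $B$ itself, so we would first compute a decomposition via \cref{thm:computedecomposition} with $\cH=\{P_4\}$. In time $n^{\Oh(k)}$ this either reports $\treepi_{P_4}(G)>k$ (and we stop) or returns a decomposition with induced $P_4$-packing number at most $8k$. A central bag $B$ of this decomposition has $\pi_{P_4}(G,B)\le 8k$, and every component of $G-B$ has at most $n/2$ vertices.

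\textbf{Applying the container lemma.} Apply \cref{thm:H-clean-containers-is} to $G$, $B$, and the constant $8k$. This gives a family $\cF$ of $n^{\Oh(\log n)}$ pairs $(I,F)$. By property (iii), the optimum equals $\max_{(I,F)\in\cF}\big(\wei(I)+\mathrm{OPT}(G[F])\big)$, so it suffices to solve \MWIS in each $G[F]$. By property (ii), $G[F]$ is $(B\cap F,P_4)$-clean. We then invoke \cref{lem:terminal-p4-solver} on $G[F]$ with the set $B\cap F$. For the subroutine \textsf{Alg} on induced subgraphs $G''$ of $G[F]-B$, note that every component of $G''$ lies inside a component of $G-B$, so it has at most $n/2$ vertices. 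We solve each such component by a recursive call to $\mathsf{Solve}$ and add up the results. This recursion is legitimate because induced subgraphs satisfy $\treepi_{P_4}\le k$ by \cref{lem:structural-package}(\ref{sprop:3}). If any recursive call reports $\treepi_{P_4}>k$ on an induced subgraph, we correctly report the same for $G$.

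\textbf{Running time (main obstacle).} The delicate point is that \cref{lem:terminal-p4-solver} calls \textsf{Alg} on many prime induced subgraphs, and could call it on the same small pieces repeatedly. Its bound $T(n)\cdot n^{\Oh(1)}$ shows there are at most polynomially many calls, each on a graph of size at most $n/2$. To keep the bound clean, we would memoize by vertex set, or simply accept the polynomial factor. The resulting recurrence is
\[
T(n)\le n^{\Oh(\log n)}\cdot n^{\Oh(1)}\cdot T(n/2)+n^{\Oh(\log n)}.
\]
The recursion depth is $\log n$, and each level costs a factor $n^{\Oh(\log n)}$, so $T(n)=n^{\Oh(\log^2 n)}$, as claimed.

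A subtle point is that property (iii) only guarantees that the optimum is recovered when maximizing over the whole family $\cF$. Correctness of each individual subproblem must therefore come from solving every $G[F]$ exactly, and \cref{lem:terminal-p4-solver} does exactly that.
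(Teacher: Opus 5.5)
Your proposal is correct and matches the paper's proof essentially step for step. You compute a decomposition via \cref{thm:computedecomposition}, take a central bag $B$ with $\pi_{P_4}(G,B)\le 8k$, apply \cref{thm:H-clean-containers-is}, and solve each $G[F]$ through \cref{lem:terminal-p4-solver} with recursive calls on pieces of at most $n/2$ vertices, which gives $F(n)\le n^{\Oh(\log n)}F(n/2)=n^{\Oh(\log^2 n)}$. Your opening claim that the algorithm ``never computes a tree decomposition'' contradicts what you then do, but the algorithm you actually describe is the right one, and your explicit propagation of a rejection from recursive calls via \cref{lem:structural-package}~(\ref{sprop:3}) is a worthwhile detail.
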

\begin{proof}
Let $k$ be a fixed non-negative integer.
Let $(G,\wei)$ be an $n$-vertex vertex-weighted graph.
The algorithm is recursive. If $n \leq 1$, then we return $V(G)$ as the maximum-weight independent set.
Thus, suppose that $n \geq 2$ and that we can solve \MWIS in any proper induced subgraph of $G$.

First, we apply \cref{thm:computedecomposition} to $G$ and $k$.
If the algorithm reports $\treepi_{P_4}(G)>k$, then we return this correct conclusion.
Otherwise, let $(T,\beta)$ be the tree decomposition returned by the algorithm.
Thus, for every  $t\in V(T)$ it holds that $\pi_{P_4}(G,\beta(t))\leq k'$, where $k' = 8k$.
Next, we apply \cref{lem:balanced-bag}  to the graph $G$ with uniform weight function and
tree decomposition $(T,\beta)$.
This way we obtain a node $t\in V(T)$ such that, with $B=\beta(t)$, we have $\pi_{P_4}(G,B)\leq k'$ and every component of $G-B$ has at most $n/2$ vertices.

We apply \cref{thm:H-clean-containers-is} to $G$ and $B$ with $\cH = \{P_4\}$,
let $\cF_{P_4}(G,B)$ be the returned family; recall that it has size $n^{\Oh(\log n)}$.

For every $(I,F) \in \cF_{P_4}(G,B)$, we will compute a maximum-weight independent set $I_F$ in $(G[F],\wei)$.
Then, we will return the set $I \cup I_F$ of maximum weight among all $(I,F) \in \cF_{P_4}(G,B)$.
Note that by the first property of \cref{thm:H-clean-containers-is}, $I \cup I_F$ is independent, and by the last property, we are guaranteed to find an optimum solution.

So, we are left with computing a maximum-weight independent set in $(G[F],\wei)$ for a given $F$ such that $G[F]$ is $(B \cap F,P_4)$-clean.
We plan to apply \cref{lem:terminal-p4-solver} to $(G[F],\wei)$ and $B \cap F$.
The only thing needed is to provide an algorithm \textsf{Alg} that can efficiently solve \MWIS in induced subgraphs of $G[F]-B$.

Consider such a subgraph $G'$. Clearly, we can handle each component of $G'$ independently.
Every such component is contained in a single component of $G-B$, and hence has at most $n/2$ vertices.
Therefore, if $F(n)$ denotes the maximum running time of our algorithm on instances with at most $n$ vertices, then \textsf{Alg} works in time $F(n/2)\cdot n^{\Oh(1)}$.
Applying \cref{lem:terminal-p4-solver} to each member of $\cF_{P_4}(G,B)$, we conclude that the overall running time is bounded by
\[
    F(n) \leq n^{\Oh(\log n)} \cdot F(n/2) \cdot n^{\Oh(1)} = n^{\Oh(\log n)} F(n/2),
\]
which is solved by $n^{\Oh(\log^2 n)}$. This completes the proof.
\end{proof}

Now, \cref{thm:mwisp4} follows from \cref{thm:mwisp4-special} combined with \cref{lem:structural-package}~\eqref{sprop:4} and \eqref{sprop:7}.

%% file: hhr-proof.tex
\begin{proof}
    Before proving the lemma, let us observe that its statement is invariant under complementation.
    Let $\overline{G}$ denote the complement of $G$, i.e., the graph on the same vertex set as $G$ where two vertices are adjacent if and only if they are not adjacent in $G$.
    Then $G$ is prime if and only if $\overline{G}$ is prime, $G$ is split if and only if $\overline{G}$ is split, and a vertex $v$ belongs to an induced $P_4$ in $G$ if and only if it belongs to an induced $P_4$ in $\overline{G}$, because $P_4$ is self-complementary.
        
    Let $G$ be a prime graph and let $v$ be a vertex that does not belong to any induced $P_4$ in $G$.
    We will show that $G$ is split.
    
    \begin{claim}\label{clm:split-is}
        Non-neighbors of $v$ form an independent set.
    \end{claim}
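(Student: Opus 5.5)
Let $G$ be prime and let $v$ be a vertex lying in no induced $P_4$. Let $A=N(v)$ and $D=V(G)\setminus N[v]$. We must show that $D$ is independent. We may assume $|V(G)|\ge 3$, since graphs on at most two vertices are split.

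Suppose, for contradiction, that $D$ contains an edge. Let $D_1$ be a component of $G[D]$ with at least two vertices. For any $a\in A$ and any edge $d d'$ in $D_1$ with $a$ adjacent to $d$ but not to $d'$, the sequence $v,a,d,d'$ is an induced $P_4$. It is induced because $v$ is nonadjacent to $d$ and to $d'$, and $a$ is nonadjacent to $d'$. This contradicts the choice of $v$. Since $D_1$ is connected, it follows that every $a\in A$ is either complete or anticomplete to $D_1$. Vertices of $D\setminus D_1$ are anticomplete to $D_1$ by the definition of a component, and $v$ is anticomplete to $D_1$. Therefore $D_1$ is a module of $G$. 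It has at least two vertices and misses $v$, so it is nontrivial, which contradicts primality. Hence $D$ is independent, which proves \cref{clm:split-is}.

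By the complementation invariance noted above, the same argument applied to $\overline{G}$ (where $v$ still lies in no induced $P_4$, and $\overline{G}$ is still prime) shows that the non-neighbors of $v$ in $\overline{G}$ form an independent set in $\overline{G}$. In other words, $N(v)$ is a clique in $G$.

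Now $V(G)=N[v]\cup D$, where $N[v]$ is a clique and $D$ is an independent set, so $G$ is split. This contradicts the assumption that $G$ is not split, and the lemma follows.

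The only step requiring care is the module argument. Connectivity of $D_1$ is what turns the absence of a ``mixed'' edge into the statement that each vertex of $A$ is complete or anticomplete to all of $D_1$. The remaining work is the bookkeeping showing that $D_1$ is a nontrivial module.
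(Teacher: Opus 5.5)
Your proof is correct and is essentially the paper's argument: the induced path $v$--$a$--$d$--$d'$ forces every $a\in N(v)$ to be complete or anticomplete to each component of $G-N[v]$, so such a component is a module avoiding $v$ and, by primality, a singleton. The only differences are cosmetic: you argue by contradiction from a component with at least two vertices, and your assumption $|V(G)|\ge 3$ is unnecessary, since $D_1$ already misses $v$.
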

    \begin{claimproof}
        Let $A = N(v)$ and $B = V(G) \setminus (A \cup \{v\})$.
        Consider a component $C$ of $G[B]$.
        First, note that every vertex $a \in A$ is either complete or anticomplete to $C$.
        Indeed, suppose that $a$ has both a neighbor $b$ and a non-neighbor $b'$ in $C$.
        Since $C$ is connected, we can pick $b,b'$ so that they are adjacent.
        Then $v-a-b-b'$ is an induced $P_4$ that contains $v$, a contradiction.

        We observe that $C$ is a module in $G$.
        Indeed, $v$ is anticomplete to $C$,
        every vertex from $A$ is either complete or anticomplete to $C$, and every vertex from $B \setminus C$ is anticomplete to $C$ as $C$ is a component of $G[B]$.

        Note that $C$ must be a trivial module, as $G$ is prime.
        As $v \notin C$, we conclude that $C$ has only one vertex.
        The choice of $C$ was arbitrary, so indeed $B$ is an independent set.               
    \end{claimproof}

    Now consider the graph $\overline{G}$. Applying \cref{clm:split-is} to $\overline{G}$ and $v$, we conclude that non-neighbors of $v$ in $\overline{G}$ form an independent set in $\overline{G}$, i.e., neighbors of $v$ in $G$ form a clique in $G$.  

    Consequently, the vertex set of $G$ can be partitioned into a clique $N[v]$ and an independent set $V(G) \setminus N[v]$, i.e., $G$ is a split graph.
\end{proof}

%% file: sec-p5-3coloring.tex
This section is devoted to the proof of \cref{thm:colpfive}.

\thmcolpfive*

The proof has the same general structure as the proof of \cref{thm:mwisp4}.
The application of \cref{thm:H-clean-containers-is} for $\cH = \{P_4\}$ is replaced with \cref{thm:H-clean-containers-coloring} for $\cH = \{P_5\}$.
So now, let us discuss how to replace \cref{lem:HHR,lem:terminal-p4-solver}.

\subsection{Dealing with $(B,P_5)$-clean instances.}
We will need two classic tools from the literature.
The first one is usually referred to as the \emph{Gyárfás path argument}~\cite{gyarfas1987problems}
(see also~\cite[Lemma~5.3]{chudnovsky2024quasi} or~\cite[Lemma~3.2]{chudnovsky2025dominated}).

\begin{lemma}\label{lem:gyarfas-path}
Let $(G,\wei)$ be a connected vertex-weighted graph,
and let $v\in V(G)$.
There is an induced path $P$ starting at $v$ such that every component of $G-N[P]$ has weight at most $\wei(G)/2$.
Furthermore, $P$ can be found in polynomial time.
\end{lemma}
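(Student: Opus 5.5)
The plan is the classical Gyárfás path construction: grow an induced path greedily, always stepping into the unique heavy component. Set $W=\wei(G)$ and start with $P=(v)$. If every component of $G-N[P]$ has weight at most $W/2$, we stop. Otherwise there is exactly one component $C$ of $G-N[P]$ with $\wei(C)>W/2$, and it is unique because two such components would have total weight exceeding $W$.

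We maintain the following invariant for the current path $P=(v_1=v,\dots,v_\ell)$:
\begin{itemize}
\item $P$ is induced;
\item the heavy component $C$ of $G-N[P]$ satisfies $N(C)\cap N[P]\subseteq N(v_\ell)\setminus N[\{v_1,\dots,v_{\ell-1}\}]$;
\item in particular, $N(C)$ contains a neighbor of $v_\ell$ that has no neighbor in $P-v_\ell$.
\end{itemize}
For $\ell=1$ the invariant holds. Since $G$ is connected and $C\ne V(G)$, we have $N(C)\neq\emptyset$, and $N(C)\subseteq N(v)$.

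For the extension step, pick $u\in N(C)\cap N(v_\ell)$ with no neighbor among $v_1,\dots,v_{\ell-1}$. Then $P'=P+u$ is an induced path.

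Next we look at the heavy component of $G-N[P']$, if one exists. Here $N[P']=N[P]\cup N[u]$, and every component of $G-N[P']$ outside $C$ is contained in a light component of $G-N[P]$. So any heavy component $C'$ satisfies $C'\subseteq C\setminus N[u]$.

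It remains to check that $N(C')$ lies in the right place. A vertex of $N(C')$ lies either in $C\cap N(u)$, which is fine because it is not in $N[P]$, or in $N(C)\cap N[P]$. The second case is the main obstacle. Such a vertex $w$ is a neighbor of $v_\ell$ adjacent to $C'$, and the invariant as written would fail, because $w$ is adjacent to $v_\ell$ and not to $u$.

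The fix is to weaken the invariant to just $C\cap N[P]=\emptyset$, which holds by definition, and to measure progress differently. Each step removes $u$ and its neighbors from $C$, so $C$ strictly shrinks and the process terminates. When we cannot extend, there is no heavy component, and we are done.

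Inducedness of the extended path needs care, since we must pick $u\in N(C)$ adjacent to $v_\ell$ but to no earlier $v_i$. To guarantee this, choose $u$ at each step as a vertex of the new heavy component $C'$ adjacent to $u_{\text{prev}}$, that is, $u\in C\cap N(v_\ell)$ rather than in $N(C)$.

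In this version the step goes as follows:
\begin{itemize}
\item We keep a heavy component $C$ of $G-N[P\setminus\{v_\ell\}]$ that contains neighbors of $v_\ell$, and we let $C_\ell=C\setminus N[v_\ell]$.
\item If $G-N[P]$ still has a heavy component $C'$, then $C'\subseteq C\setminus N[v_\ell]$ and it is a component of $C\setminus N[v_\ell]$.
\item Since $C$ is connected and $C'$ is a proper subset of $C$, some vertex $u\in C\cap N(v_\ell)$ has a neighbor in $C'$.
\item Because $u\in C$, it is non-adjacent to $v_1,\dots,v_{\ell-1}$, so $P+u$ is induced, and $C'$ becomes the new $C$.
\end{itemize}
For the base case, take $C=V(G)$ with $P\setminus\{v_\ell\}=\emptyset$.

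The heavy component strictly shrinks at every step, so the process ends after at most $n$ steps, each computable in polynomial time. At termination every component of $G-N[P]$ has weight at most $\wei(G)/2$.
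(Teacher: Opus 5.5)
Your final bulleted construction is correct and is exactly the standard Gyárfás path argument that the paper cites without proof: you keep $C$ as the heavy component of $G-N[P\setminus\{v_\ell\}]$ and take the next vertex from $C\cap N(v_\ell)$ on the boundary of the new heavy component $C'$, which gives both inducedness and strict shrinking of $C$. For a clean write-up, drop the abandoned first invariant and the garbled sentence about choosing $u$ inside $C'$ (no vertex of $C'$ can be adjacent to a path vertex), and state the one-line reason the boundary vertex $u$ lies in $N(v_\ell)$: it is outside $C'$ but adjacent to it, so $u\in N[P]$; since $u\in C$, this forces $u\in N[v_\ell]$; and $u\neq v_\ell$ because $v_\ell$ has no neighbor in $C'$.
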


The second one is a result of Edwards~\cite{edwards1986complexity}, which shows that instances of list coloring where each list has size at most 2 can be solved in polynomial time by a reduction to 2-\textsc{Sat}.

\begin{lemma}\label{lem:edwards}
Let $G$ be a graph and let $L : V(G) \to 2^{\N}$ be a list assignment such that $|L(v)| \leq 2$ for every $v \in V(G)$. Then, one can check in polynomial time whether $G$ admits a proper coloring in which every vertex receives a color from its list.
\end{lemma}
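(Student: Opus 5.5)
The statement to prove is \cref{lem:edwards}, Edwards' result: list coloring with lists of size at most $2$ is polynomial-time decidable. The plan is the standard reduction to 2-\textsc{Sat}, followed by a polynomial-time 2-\textsc{Sat} algorithm, e.g.\ the implication-graph/strongly-connected-components method of Aspvall, Plass, and Tarjan.

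\textbf{Cleanup.} First perform a linear-time preprocessing.
\begin{itemize}
\item If some vertex has $L(v)=\emptyset$, reject.
\item Otherwise introduce, for every vertex $v$ and every color $c\in L(v)$, a Boolean variable $x_{v,c}$, meaning ``$v$ receives color $c$''.
\end{itemize}
Each vertex has at most two such variables, so the number of variables is at most $2|V(G)|$.

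\textbf{Clauses for vertices.} The constraint ``$v$ gets exactly one color from its list'' is written with clauses of size at most $2$.
\begin{itemize}
\item If $L(v)=\{c\}$, add the unit clause $(x_{v,c})$.
\item If $L(v)=\{c,d\}$, add $(x_{v,c}\vee x_{v,d})$ and $(\neg x_{v,c}\vee \neg x_{v,d})$.
\end{itemize}
\textbf{Clauses for edges.} For every edge $uv$ and every color $c\in L(u)\cap L(v)$, add $(\neg x_{u,c}\vee\neg x_{v,c})$. There are at most two such clauses per edge, so the formula has size $\Oh(|V(G)|+|E(G)|)$.

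\textbf{Correctness.} Both directions are direct.
\begin{itemize}
\item A proper list coloring $\phi$ yields the satisfying assignment $x_{v,c}=[\phi(v)=c]$.
\item Conversely, a satisfying assignment sets exactly one variable of each vertex to true, which defines a color $\phi(v)\in L(v)$. The edge clauses forbid $\phi(u)=\phi(v)$ for adjacent $u,v$, since a shared color must lie in $L(u)\cap L(v)$.
\end{itemize}
The polynomial (in fact linear) solvability of 2-\textsc{Sat} then finishes the argument.

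There is essentially no obstacle here. The only point needing care is that the ``exactly one'' constraint stays within 2-clauses, which holds precisely because $|L(v)|\le 2$. This is also why the lemma fails for lists of size $3$.
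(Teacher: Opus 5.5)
Your proposal is correct and matches the paper's approach: the paper does not reprove this lemma but cites Edwards and notes that it follows by a reduction to 2-\textsc{Sat}. Your encoding is that reduction, with at most two variables per vertex, exactly-one clauses per vertex, and one conflict clause per edge and shared color, and your check of both directions is complete.
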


Now, we can show the following lemma, inspired by the algorithm for \lcol{$k$} in $P_5$-free graphs~\cite{DBLP:journals/algorithmica/HoangKLSS10}.

\begin{lemma}\label{lem:p5clean}
    Let $(G,L)$ be an instance of \lcol{3}, where $G$ has $n$ vertices and is connected.
    Let $B \subseteq V(G)$ be a nonempty set of vertices such that $G$ is $(B,P_5)$-clean.
    Then, in polynomial time, one can either solve $(G,L)$ or enumerate a family $\cC$ of $\Oh(n^4)$ instances of $(G,L)$
    such that for each $(G',L') \in \cC$:
    \begin{enumerate}
        \item $G'$ is an induced subgraph of $G$,
        \item each component of $G'$  has at most $n/2$ vertices, and
        \item $(G,L)$ is a yes-instance of \lcol{3} if and only if there exists $(G',L') \in \cC$ such that $(G',L')$ is a yes-instance of \lcol{3}.
    \end{enumerate} 
\end{lemma}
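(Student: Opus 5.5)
We plan to combine the Gyárfás path argument (\cref{lem:gyarfas-path}) with the $(B,P_5)$-cleanness, in the spirit of the algorithm of Hoang et al.\ for $P_5$-free graphs. We first preprocess: remove vertices with singleton lists, propagating the color to their neighbors, and reject on empty lists. If all lists then have size at most $2$, we solve the instance directly by \cref{lem:edwards}. We also note that once a vertex is assigned a color and deleted, its neighbors' lists shrink, so any vertex whose list reaches size $\le 2$ can be handled by 2-\textsc{Sat} at the end.

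Fix a vertex $v\in B$. This vertex exists since $B\neq\emptyset$, and we may assume it is still present after preprocessing; otherwise we handle the resulting components directly. Apply \cref{lem:gyarfas-path} to $G$ with uniform weights, starting at $v$, to obtain an induced path $P$ with $v\in P$ such that every component of $G-N[P]$ has at most $n/2$ vertices. The key point is that $P$ is an induced path starting at a vertex of $B$, and $G$ is $(B,P_5)$-clean, so $P$ has at most $4$ vertices.

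Now we branch on a coloring of $P$, which gives at most $3^4$ choices. In each branch, after fixing colors on $P$ and deleting $P$, every vertex of $N(P)$ loses at least one color and so has a list of size at most $2$. The remaining difficulty is that the graph $G-P$ may still contain large components, since $N(P)$ can connect small components of $G-N[P]$.

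To get the promised family with $\Oh(n^4)$ members, we need to also branch so as to ``cut'' $N(P)$ out. The first approach we will try is the standard reduction used for $P_5$-free graphs. In the combined instance, every vertex of $N(P)$ has a list of size at most $2$, and every vertex outside $N[P]$ is in a small component of $G-N[P]$.

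We would like to say that for a 3-coloring it suffices to decide colors of $N(P)$ consistently, and that the interaction between a list-2 vertex $u\in N(P)$ and a component $C$ of $G-N[P]$ can be encoded. The intended form of the final instance is $G'=G-N[P]$, with lists $L'$ shrunk accordingly, which is a union of components of size at most $n/2$.

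The main obstacle is exactly this step: the colors of $N(P)$ are not determined by the coloring of $P$, and they interact with $G-N[P]$. We expect to guess a bounded number of additional vertices, giving the $n^{4}$ factor, using $(B,P_5)$-cleanness again. Induced paths through $v$ and $P$ into $N(P)$ and then into $G-N[P]$ have length at most $4$, which should force every vertex of $N(P)$ adjacent to a component $C$ of $G-N[P]$ to be complete or anticomplete to it, or to yield a short dominating structure.

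This is what we would try to prove as a structural claim. If the claim holds, the propagation from $N(P)$ to each component reduces to removing at most one forced color per component. We would then branch on $\Oh(n^4)$ choices, namely the coloring of $P$ and, for up to two representatives, colors and identities. This fixes the effect on each component and yields equivalent instances $(G-N[P],L')$. The equivalence of these instances with $(G,L)$ would then be verified, with the list-$2$ part of $N(P)$ checked via \cref{lem:edwards}.
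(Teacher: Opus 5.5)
\textbf{There is a genuine gap.} Your opening steps match the paper: the Gy\'arf\'as path from a vertex of $B$, $|V(P)|\le 4$ by cleanness, and branching on the coloring of $P$. But the step you flag as the main obstacle is where the proof actually lives, and the plan you sketch for it would fail.

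First, the structural claim you hope for is false for neighbors of the first path vertex $x_1$. Let $R=V(G)\setminus N[P]$, let $cc'$ be an edge inside $R$, and let $u\in N(x_1)$ with $u$ adjacent to $c$ but not to $c'$. Then $x_1-u-c-c'$ has only four vertices (for instance when $P=x_1$, or when $u$ is also adjacent to $x_2$), so cleanness gives no contradiction. The paper instead stratifies $N(P)$ into $N_i=N(x_i)\setminus\bigcup_{j<i}N[x_j]$. Cleanness then gives the following:
\begin{itemize}
\item $N_4=\emptyset$;
\item there are no edges between $N_3$ and $R$;
\item the complete/anticomplete behavior holds only for $N_2$, via the path $x_1-x_2-v-u-u'$.
\end{itemize}
For $N_1$ a different idea is needed. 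For each color $i\neq\phi(x_1)$, take a minimal $S^i\subseteq N_1\cap\phi^{-1}(i)$ dominating all $R$-neighbors of $N_1\cap\phi^{-1}(i)$. It has at most two vertices: three members with private neighbors $v_1',v_2',v_3'\in R$ give an induced $P_5$ through $x_1$. This is $v_1'-v_1-x_1-v_2-v_2'$, or $v_1'-v_2'-v_2-x_1-v_3$ if the $v_j'$ form a triangle. You then guess the coloring of $P$ and $S^i$ for both colors $i\ne\phi(x_1)$; that is four vertices, hence $\Oh(n^4)$, not ``two representatives''. Delete $i$ from every $N_1$-vertex having an $R$-neighbor with no neighbor in $S^i$, and propagate singleton lists. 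After this, no $N_1$--$R$ edges remain.

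Second, even granting complete/anticomplete adjacency, your target instance $(G-N[P],L')$ is not equivalent to $(G,L)$. Suppose $\phi(x_2)=3$, so vertices of $N_2$ have list $\{1,2\}$. A component $D$ of $R$ attached to $N_2$ must avoid the color of its $N_2$-neighbors. That color is not fixed by any polynomial amount of guessing: it is determined by a 2-\textsc{Sat} system on $N$ that couples many such components. So there is no ``forced color'' to delete from $D$.

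The paper does not recurse on these mixed components at all. Instead:
\begin{itemize}
\item $D$ can only use $\{1,3\}$ or $\{2,3\}$, and both options are tested via \cref{lem:edwards};
\item $D$ is contracted to a single vertex whose list records the feasible $j\in\{1,2\}$;
\item the resulting mixed component, now with all lists of size at most $2$, is decided outright by \cref{lem:edwards};
\item components inside $N$ are likewise solved by 2-\textsc{Sat};
\item only components lying entirely in $R$ (of size at most $n/2$) are kept in the instances of $\cC$.
\end{itemize}

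Separately, propagating singleton lists \emph{before} choosing $P$ can leave large components that avoid $B$, and you have no way to ``handle directly'' those. The paper propagates only after branching.
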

\begin{proof}    
    Pick $x_1 \in B$ and apply \cref{lem:gyarfas-path} to $G$ with uniform weight function and vertex $x_1$
    to obtain an induced path $P = x_1-x_2-\ldots-x_t$ starting at $x_1$ such that every component of $G-N[P]$ has at most $n/2$ vertices.
    Since $G$ is $(B,P_5)$-clean, we have that $t \leq 4$. For simplicity of exposition, assume that $t=4$; if $t < 4$, some objects discussed in the proof do not exist. We emphasize that their existence is never used in the proof, and thus, the argument is still valid.

    Let $N = N(P)$ and for $i \in [4]$, let $N_i = N \cap N(x_i) \setminus \bigcup_{j < i} N[x_j]$.

    Let $R = V(G) - N[P]$.
    By the choice of $P$, every component of $G[R]$ has at most $n/2$ vertices.

\paragraph{Edges between $N$ and $R$.}

    Let us analyze the neighborhood of $N$ in $R$. We start with sets $N_2,N_3$, and $N_4$.
    \begin{claim}
        The following hold:
        \begin{enumerate}
            \item for every $v \in N_2$ and every component $C$ of $G[R]$, either $v$ is complete to $C$ or anticomplete to $C$,
            \item there are no edges between $N_3$ and $R$,
            \item $N_4 = \emptyset$.
        \end{enumerate}
    \end{claim}
    \begin{claimproof}
        (1) Suppose there is a vertex $v \in N_2$ and a component $C$ of $G[R]$ such that $v$ has a neighbor and a non-neighbor in $C$. Since $G[C]$ is connected, there must be two adjacent vertices $u,u' \in C$ such that $v$ is adjacent to $u$ and non-adjacent to $u'$. Then, $x_1-x_2-v-u-u'$ is an induced $P_5$ intersecting $B$, a contradiction.

        \noindent (2) Suppose there is an edge $vu$ where $v \in N_3$ and $u \in R$. Then, $x_1-x_2-x_3-v-u$ is an induced $P_5$ intersecting $B$, a contradiction.

        \noindent (3) Suppose there is a vertex $v \in N_4$. Then, $x_1-x_2-x_3-x_4-v$ is an induced $P_5$ intersecting $B$, a contradiction.
    \end{claimproof}

    Now let us analyze the neighborhood of $N_1$ in $R$.
    Suppose that $(G,L)$ is a yes-instance of \lcol{3} and let $\phi$ be a proper coloring of $G$ respecting $L$. 
    For each $i \in [3] \setminus \{\phi(x_1)\}$, let $N_1^i$ be the set of vertices $v \in N_1$ such that $\phi(v) = i$.
    Let $S^i$ be an inclusion-wise minimal subset of $N_1^i$ with the following
property: for every vertex $u\in R$, if $u$ has a neighbor in $N_1^i$, then
$u$ has a neighbor in $S^i$.
    
    \begin{claim}
        For each $i \in [3] \setminus \{\phi(x_1)\}$, we have $|S^i| \leq 2$.
    \end{claim}
    \begin{claimproof}        
        Suppose that $|S^i| \geq 3$ for some $i \in [3] \setminus \{\phi(x_1)\}$. Let $v_1,v_2,v_3 \in S^i$ be distinct vertices.
        Note that $S^i$ is independent, since they all receive the same color in $\phi$.
        By minimality of $S^i$, there are three distinct vertices $v'_1,v'_2,v'_3$ in $R$ such that $v'_j$ is adjacent to $v_j$ and non-adjacent to $S^i \setminus \{v_j\}$ for each $j \in [3]$.

        If there is a non-edge between $v'_1,v'_2,v'_3$, say, $v_1'v'_2 \notin E(G)$,
        then $v'_1-v_1-x_1-v_2-v'_2$ is an induced $P_5$ intersecting $B$.
        Otherwise, $v'_1,v'_2,v'_3$ form a triangle. Then $v'_1-v'_2-v_2-x_1-v_3$ is an induced $P_5$ intersecting $B$.
        In both cases, we obtain a contradiction.
    \end{claimproof}

    Now, we exhaustively guess the coloring $\phi$ of $P$ and the sets $S^i$ for each $i \in [3] \setminus \phi(x_1)$ corresponding to $\phi$. This results in at most $3 \cdot 3^3 \cdot n^4 = \Oh(n^4)$ branches.
    For each branch, we proceed as follows.
    For each vertex whose color is guessed, we remove all colors from its list except for the guessed color.
    For each $u \in R$, if $u$ has no neighbor in $S^i$, then we remove color $i$ from the list of every neighbor of $u$ in $N_1$.
    Note that these operations are consistent with the coloring $\phi$ and the definition of $S^i$.

    Then, we exhaustively apply the following reduction rules:
    \begin{enumerate}
        \item If there is a vertex $v$ such that $L(v) = \emptyset$, then we discard the current branch.
        \item If there is a vertex $v$ such that $|L(v)| = 1$, then we remove the unique color from the lists of all neighbors of $v$ and remove $v$ from the graph.
    \end{enumerate}
        
    \begin{claim}
        There are no edges between $N_1$ and $R$.
    \end{claim}
    \begin{claimproof}
        Suppose there is an edge $vu$ where $v \in N_1$ and $u \in R$.
        As reduction rules cannot be applied, we know that $|L(v)| \geq 2$ and $|L(u)| \geq 2$.
        Consequently, there is a color $i \in L(v) \cap L(u)$.

        If $i = \phi(x_1)$, then the color $i$ was removed from $L(v)$, a contradiction.
        Otherwise, $i \in [3] \setminus \{\phi(x_1)\}$.
        If $u$ is adjacent to a vertex from $S^i$, then the color $i$ was removed from $L(u)$, a contradiction.
        Thus, $u$ has no neighbor in $S^i$. Then, the color $i$ was removed from $L(v)$, a contradiction.
    \end{claimproof}

    \paragraph{Components of $G$.}
    Let us summarize the properties of the current instance.
    There are three possible types of components $G'$ of $G$:
    \begin{enumerate}
        \item $G'$ is contained in $R$; this means that it has at most $n/2$ vertices by the properties of $P$,
        \item $G'$ is contained in $N$; this means that every vertex of $G'$ is adjacent to some vertex of $P$ and thus, the list of such a vertex has size 2,
        \item $G'$ intersects both $N$ and $R$. 
    \end{enumerate}
    Components of the first type are consistent with the properties of $\cC$ in the statement of the lemma.
    Components of the second type can be solved in polynomial time by \cref{lem:edwards}.
    
    Thus, let us consider a component $G'$ of the third type.
    By symmetry, assume that the color of $x_2$ is 3 and thus, every vertex in $N_2$ has list $\{1,2\}$
    (we excluded the color of $x_2$ and removed vertices with smaller lists). 

    Let $D_1,\ldots,D_p$ be the components of $G'[R]$. Note that the neighborhood of each $D_i$ is contained in $N_2$,
    and every vertex $v \in N_2$ that is adjacent to $D_i$ is actually complete to $D_i$.
    Thus, for each $D_i$, the colors used on $D_i$ will be either a subset of $\{1,3\}$ or of $\{2,3\}$.

    Consider each $D_i$ separately.
    If $D_i$ consists of a single vertex with 3 in its list, we remove this vertex.
    Otherwise, for $j \in \{1,2\}$, we check whether $D_i$ can be colored with colors  $\{j,3\}$, additionally respecting the lists. This can be done in polynomial time by \cref{lem:edwards}.
    We remove $D_i$ from the graph and instead introduce a new vertex $d_i$ adjacent to all vertices of $N_2$ that are adjacent to $D_i$ (this can be equivalently seen as contracting $D_i$ to $d_i$).
    The list of $d_i$ is set to $\{ j \in \{1,2\} ~|~ D_i$ can be colored with colors $\{j,3\}$ while respecting the lists$\}$.

    Denote the resulting instance as $(G^*,L^*)$.

    \begin{claim}\label{clm:from-old-to-new}
        If $(G',L)$ is a yes-instance of \lcol{3}, then $(G^*,L^*)$ is a yes-instance of \lcol{3}.
    \end{claim}
    \begin{claimproof}
        Let $\phi$ be a proper coloring of $G'$ respecting $L$.
        We will define a proper coloring $\phi^*$ of $G^*$ respecting $L^*$.
        On vertices of $G^*$ that are also in $G'$, we set $\phi^*(v) = \phi(v)$.
        Thus, we only need to define colors for vertices $d_i$.
                        
        Recall that for each vertex $v \in N_2$, we have $\phi(v) \in \{1,2\}$.        
        Consider a component $D_i$ of $G'[R]$ for which $d_i$ exists.
        Let $A$ be the set of neighbors of $D_i$ in $N$;
        recall that $A \neq \emptyset$ and every vertex of $A$ is complete to $D_i$.

        Note that the set of colors used on $D_i$ is either a subset of $\{1,3\}$ or of $\{2,3\}$,
        as $\phi$ uses at least one color from $\{1,2\}$ on $A$.
        By symmetry, assume that the former holds. This means $1 \in L^*(d_i)$.
                
        We observe that color $1$ appears on some vertex of $D_i$.
        Otherwise, if $\phi$ uses only color $3$ on $D_i$, then $D_i$ is a single vertex with 3 in its list, 
        and thus, $d_i$ does not exist.
        Consequently, no vertex in $A$ is colored 1, and thus, we can safely set $\phi^*(d_i)=1$.
        
        Repeating this for every $D_i$, we obtain a proper coloring $\phi^*$ of $G^*$ respecting $L^*$.
    \end{claimproof}

        \begin{claim}\label{clm:from-new-to-old}
        If $(G^*,L^*)$ is a yes-instance of \lcol{3}, then $(G',L)$ is a yes-instance of \lcol{3}.
    \end{claim}
    \begin{claimproof}
        Suppose that $\phi^*$ is a proper coloring of $G^*$ respecting $L^*$.
        We will define a proper coloring $\phi$ of $G'$ respecting $L$.
        On vertices of $G'$ that are also in $G^*$, we set $\phi(v) = \phi^*(v)$.
        Thus, we only need to define colors for vertices of $D_i$.

        Again, note that for each vertex $v \in N_2$, we have $\phi(v) \in \{1,2\}$.
        Consider a component $D_i$ of $G'[R]$.

        If $D_i$ is a single vertex with 3 in its list, then we can set $\phi(v) = 3$ for the unique vertex $v \in D_i$. Since every neighbor of $v$ lies in $N_2$ and is colored either $1$ or $2$,
        assigning color $3$ to $v$ does not violate properness.

        So, suppose that this is not the case.
        This means that $d_i$ exists and $\phi^*(d_i) \in \{1,2\}$.
        By symmetry, assume that $\phi^*(d_i) = 1$.
        This means that all neighbors of $d_i$ in $G^*$ are colored $2$.

        Since $1$ was included in $L^*(d_i)$, there is a proper coloring $\phi_{i,1}$ of $D_i$ with colors $\{1,3\}$ respecting the lists.
        We can set $\phi(v) = \phi_{i,1}(v)$ for every $v \in D_i$.
        
        Repeating this for every $D_i$, we obtain a proper coloring $\phi$ of $G'$ respecting $L$.
    \end{claimproof}

    Combining \cref{clm:from-old-to-new} with \cref{clm:from-new-to-old}, we conclude that $(G',L)$ is a yes-instance of \lcol{3} if and only if $(G^*,L^*)$ is a yes-instance of \lcol{3}.
    On the other hand, every vertex of $G^*$ has list of size at most 2, and thus,
    we can solve $(G^*,L^*)$ in polynomial time by \cref{lem:edwards}.
    
  \paragraph{Summarizing.}
  So let us go back to the components of $G$ that we analyzed.
  Components of the second type can be solved in polynomial time by \cref{lem:edwards}.
  Components of the third type can also be solved in polynomial time using the reduction above and again calling \cref{lem:edwards}.
  If, for any of these components we conclude that it is a no-instance, then we can discard the current branch.
  Otherwise, we can remove all components of the second and third type from $G$ and keep only components of the first type, which have at most $n/2$ vertices.
  Then, we include the resulting instance in $\cC$.

  The number of branches is $\Oh(n^4)$, and the work done in each branch is polynomial.
  This completes the proof of the lemma.
\end{proof}

\subsection{Wrapping up}
Again, let us first show \cref{thm:colpfive} for the case that $\cH = \{P_5\}$.

\begin{theorem}\label{thm:colpfive-special}
For every fixed $k$, given an instance $(G,L)$ of \lcol{3},
in time $n^{\Oh(\log^2 n)}$ one can either solve $(G,L)$,
or correctly report that $\treepi_{P_5}(G)>k$.
\end{theorem}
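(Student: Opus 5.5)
The plan mirrors the proof of \cref{thm:mwisp4-special}. The algorithm is recursive on $n$, and for $n \le 1$ we solve the instance directly. Otherwise we run \cref{thm:computedecomposition} on $G$ with $\cH=\{P_5\}$ and $k$.

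If that algorithm reports $\treepi_{P_5}(G)>k$, we return this conclusion. Otherwise we have a tree decomposition with $\pi_{P_5}(G,\beta(t))\le 8k$ for every node. By \cref{lem:balanced-bag} with uniform weights we pick a bag $B$ such that every component of $G-B$ has at most $n/2$ vertices.

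We apply \cref{thm:H-clean-containers-coloring} to $(G,L)$ and $B$ with $\cH=\{P_5\}$ and parameter $8k$. This either certifies a no-instance or returns a family $\cF_{P_5}(G,B)$ of $n^{\Oh(\log n)}$ instances $(G',L')$, each $(B\cap V(G'),P_5)$-clean, such that $(G,L)$ is a yes-instance iff some member is. It then suffices to decide each $(G',L')$, and we do so component by component, since list colorability is decided independently on components.

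Each component $C$ of $G'$ is handled according to whether it meets $B$.
\begin{itemize}
\item If $C$ is disjoint from $B$, it lies inside a single component of $G-B$, so $|C|\le n/2$ and we recurse. The recursion either solves $C$ or reports $\treepi_{P_5}(C)>k$, which implies $\treepi_{P_5}(G)>k$ by \cref{lem:structural-package}(\ref{sprop:3}).
\item If $C$ meets $B$, then $C$ is connected and $(B\cap C,P_5)$-clean with $B\cap C\neq\emptyset$. We apply \cref{lem:p5clean} to $(C,L'|_C)$ and $B\cap C$. It either solves $C$ directly or produces $\Oh(n^4)$ instances whose components have at most $|C|/2\le n/2$ vertices. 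Some such instance is a yes-instance iff $C$ is. Each of these instances is solved by recursing on its components.
\end{itemize}

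Let $F(n)$ be the worst-case running time. Since all recursive calls are on graphs with at most $n/2$ vertices, we get
\[
F(n)\le n^{\Oh(\log n)}\cdot n^{\Oh(1)}\cdot F(n/2),
\]
which resolves to $F(n)=n^{\Oh(\log^2 n)}$.

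The main point requiring care is correctness when a recursive call reports $\treepi_{P_5}>k$ on an induced subgraph produced by \cref{lem:p5clean}. This is fine because that lemma outputs induced subgraphs of $C$, and hence of $G$; list modifications do not affect the parameter. A second point is that \cref{lem:p5clean} may shrink lists rather than only delete vertices, which is harmless for the recursion. Finally, \cref{thm:colpfive} follows from \cref{thm:colpfive-special} via \cref{lem:structural-package}(\ref{sprop:4}) and~(\ref{sprop:7}). If $\cH$ contains $P_t$ with $t\le5$, then $\treepi_{P_5}(G)\le\treepi_{P_t}(G)\le\treepi_{\cH}(G)$, so a report $\treepi_{P_5}(G)>k$ gives $\treepi_{\cH}(G)>k$.
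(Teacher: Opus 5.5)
Your proposal is correct and follows essentially the same route as the paper: compute a decomposition via \cref{thm:computedecomposition}, take a central bag, apply the coloring container lemma, recurse on components avoiding $B$, and handle components meeting $B$ via \cref{lem:p5clean} followed by recursion, giving $F(n)\le n^{\Oh(\log n)}F(n/2)$. Your remark that a recursive report of $\treepi_{P_5}>k$ on an induced subgraph transfers to $G$ by \cref{lem:structural-package}(\ref{sprop:3}) is a small point the paper leaves implicit.
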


\begin{proof}
    Let $(G,L)$ be an $n$-vertex instance of \lcol{3} such that $\treepi_{P_5}(G)\leq k$.
    The algorithm is recursive. If $n \leq 1$, then we can solve the instance by brute force.
    Thus, assume that $n \geq 2$ and we can solve every instance with $n' < n$ vertices in time $F(n')$.
    We aim to show that we can solve $(G,L)$ in time $F(n) = 2^{\Oh(\log^3 n)}$.

    We apply \cref{thm:computedecomposition} to $G$ with $\cH = \{P_5\}$.
    If this application reports that $\treepi_{P_5}(G)>k$, then we are done.
    Otherwise, we obtain a tree decomposition $(T,\beta)$ of $G$ such that $\pi_{P_5}(G,\beta(t)) \leq 8k$ for every $t \in V(T)$.
    We apply \cref{lem:balanced-bag} to $G$ with uniform weight function and $(T,\beta)$ to obtain a node $t \in V(T)$ such that, with $B = \beta(t)$, we have $\pi_{P_5}(G,B) \leq 8k$ and every component of $G-B$ has at most $n/2$ vertices.

    We apply \cref{thm:H-clean-containers-coloring} to $(G,L)$ and $B$ with $\cH = \{P_5\}$. If this application solves $(G,L)$, we are done. 
    So suppose otherwise and let $\cF_{P_5}(G,B)$ be the obtained family of instances.
    By the last property in~\Cref{thm:H-clean-containers-coloring}, we need to verify whether there is $(G',L') \in \cF_{P_5}(G,B)$ such that  $(G',L')$ is a yes-instance of \lcol{3}.
    Thus, for every $(G',L') \in \cF_{P_5}(G,B)$, we proceed as follows.
    Clearly, the colorability of $(G',L')$ can be checked independently in each component of $G'$.
    Thus, we can assume that $G'$ is connected.

    If $B \cap V(G') = \emptyset$, then $G'$ is contained in a component of $G-B$, and thus, $G'$ has at most $n/2$ vertices. In this case we call the algorithm recursively on $(G',L')$.  
    Otherwise, we invoke \cref{lem:p5clean} for $(G',L')$ and $B \cap V(G')$.
    If this invocation solves $(G',L')$, we use the returned answer. Otherwise,
    let $\cC$ be the family of instances returned by this application.

    In order to solve $(G',L')$, we need to check whether there is $(G'',L'') \in \cC$ that is a yes-instance of \lcol{3}. 
    By the properties of $\cC$ in \cref{lem:p5clean}, every component of such $G''$ is an induced subgraph of $G'$ (and thus, of $G$) and has at most $|V(G')|/2 \leq n/2$ vertices, so it
    can be solved recursively.

    Summing up, the running time is bounded by 
    \[
        F(n) \leq 2^{\Oh(\log^2 n)} \cdot n^{\Oh(1)} \cdot F(n/2)  = n^{\Oh(\log^2 n)},
    \]
    where the $n^{\Oh(1)}$-factor accounts for the total number of recursive calls.
    This completes the proof.
\end{proof}

Now, \cref{thm:colpfive} follows from \cref{thm:colpfive-special}  combined with \cref{lem:structural-package}~\eqref{sprop:4} and \eqref{sprop:7}.

%% file: sec-p3.tex
In this section we show that several problems can be solved in quasipolynomial time in graphs of bounded induced-$P_3$-packing treewidth. 
The following simple lemma allows us to deal with $(B,P_3)$-clean graphs.

\begin{lemma}\label{lem:p3clean}
    Let $G$ be a connected graph and let $B \subseteq V(G)$ be a nonempty set of vertices such that $G$ is $(B,P_3)$-clean. Then, $G$ is a complete graph.
\end{lemma}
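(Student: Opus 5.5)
The plan is a short contradiction argument: take a non-adjacent pair of vertices and use connectivity to build an induced $P_3$ that passes through a vertex of $B$.

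First I show that every vertex $b\in B$ is adjacent to all other vertices of $G$. Suppose some vertex $u\neq b$ is not adjacent to $b$. Since $G$ is connected, pick a shortest $b$--$u$ path $b=p_0-p_1-\dots-p_m=u$. Because $u\notin N[b]$, we have $m\ge 2$. A shortest path is induced, so $p_0-p_1-p_2$ is an induced $P_3$ containing $b\in B$. This contradicts the assumption that $G$ is $(B,P_3)$-clean. Hence $N[b]=V(G)$ for every $b\in B$.

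Next, suppose $G$ is not complete, so some two distinct vertices $u,w$ are non-adjacent. By the first step, neither $u$ nor $w$ lies in $B$. Since $B$ is nonempty, fix some $b\in B$; it is adjacent to both $u$ and $w$. Then $u-b-w$ is an induced $P_3$ containing $b$, which again contradicts cleanness. Therefore $G$ is complete.

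I do not expect a real obstacle. The only point needing care is the first step, where the induced $P_3$ has to pass through the vertex of $B$. Starting the shortest path at $b$ ensures that the first three vertices of the path form an induced $P_3$ rooted at $b$.
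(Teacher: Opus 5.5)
Your proposal is correct and follows essentially the same argument as the paper. Like the paper, you first use a shortest path from a vertex of $B$ to show that this vertex is adjacent to all others, and then use it as the middle vertex of an induced $P_3$ between any non-adjacent pair.
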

\begin{proof}
    Pick any $v \in B$.
    If there is a vertex $u \in V(G)$ that is not adjacent to $v$, then the shortest path from $v$ to $u$ has at least three vertices, and thus there is an induced $P_3$ intersecting $B$, a contradiction.
    So, $v$ is adjacent to every vertex in $G$.
    If $G$ is not complete, then there are two vertices $x,y \in V(G)$ that are not adjacent.
    Then $x-v-y$ is an induced $P_3$ intersecting $B$, a contradiction.
\end{proof}


\subsection{\textsc{Odd Cycle Transversal}}
\input{sec-octp3}

\subsection{Blob graphs and their applications}\label{sec:blob}

\input{sec-blob}

%% file: sec-octp3.tex
In this section, we prove \cref{thm:octpthree}.
\thmoctpthree*

Actually, we will solve a slightly more general problem.
In \textsc{Max List $2$-Colorable Induced Subgraph}, we are given a graph $G$,
a list function $L : V(G) \to 2^{\{1,2\}}$,
and a revenue function $\rev: V(G) \times \{1,2\} \to \mathbb{Q}_{\ge 0}$.
The goal is to find two disjoint independent sets $X_1$ and $X_2$ in $G$,
such that, for every $v \in V(G)$, $v \in X_i$ implies $i \in L(v)$,
in order to maximize the revenue defined as 
\[
        \sum_{v \in X_1} \rev(v,1) + \sum_{v \in X_2} \rev(v,2).
\]
We remark that the lists $L(v)$ are actually not needed as forbidding a vertex $v$ from $X_i$
can be achieved by setting $\rev(v,i) = 0$. However, it will be convenient to keep the lists in the problem definition.

Clearly, \textsc{Max List $2$-Colorable Induced Subgraph} generalizes \OCT.
Indeed, if $L(v) = \{1,2\}$ and $\rev(v,1) = \rev(v,2) = \wei(v)$ for all $v \in V(G)$,
then the maximum revenue of a solution to \textsc{Max List $2$-Colorable Induced Subgraph}
is equal to the sum of weights of all vertices minus the weight of a minimum-weight odd cycle transversal.

Now, let us proceed to the proof.
Again, by \cref{lem:structural-package}~\eqref{sprop:4} and \eqref{sprop:7},
it is enough to assume that $\cH = \{P_3\}$.

\begin{theorem}
For every fixed $k$, given an $n$-vertex instance $(G,L,\rev)$ of \textsc{Max List $2$-Colorable Induced Subgraph},
in time $n^{\Oh(\log^2 n)}$ one can either solve it 
or correctly report that $\treepi_{P_3}(G)>k$.
\end{theorem}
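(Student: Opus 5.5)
We follow the template of \cref{thm:mwisp4-special}, with the container lemma applied twice, once for each colour class. The algorithm is recursive, with $F(n)$ denoting its worst-case running time. First we apply \cref{thm:computedecomposition} with $\cH=\{P_3\}$. This either certifies $\treepi_{P_3}(G)>k$, or returns a decomposition with $\pi_{P_3}(G,\beta(t))\le 8k$ for every node $t$. By \cref{lem:balanced-bag} we then pick a central bag $B$, so that every component of $G-B$ has at most $n/2$ vertices. Next we apply \cref{thm:H-clean-containers-is} to $G$ and $B$ with $\cH=\{P_3\}$, obtaining a family $\cF$ of size $n^{\Oh(\log n)}$.

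Since $X_1$ and $X_2$ are both independent sets, by property (iii) there are pairs $(I_1,F_1),(I_2,F_2)\in\cF$ with $I_i\subseteq X_i\subseteq I_i\cup F_i$ for $i=1,2$. We therefore branch over all $n^{\Oh(\log n)}$ ordered pairs of members of $\cF$, and discard a branch if $I_1\cap I_2\neq\emptyset$ or if some $v\in I_i$ has $i\notin L(v)$. In each surviving branch:
\begin{itemize}
\item every vertex of $I_i$ is fixed to side $i$, collects revenue $\rev(v,i)$, and is deleted;
\item every other vertex $v$ gets the new list $L'(v)=\{i\in L(v) : v\in F_i\setminus(I_1\cup I_2)\}$;
\item vertices with empty list are deleted.
\end{itemize}
Deleting the fixed vertices is safe: $F_i\cap N[I_i]=\emptyset$, so no remaining vertex that could join $X_i$ is adjacent to $I_i$, and edges between $I_1$ and $X_2$ are irrelevant. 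Let $V_i=\{v : i\in L'(v)\}$ be the two layers. By (ii), each $G[V_i]$ is $(B\cap V_i,P_3)$-clean, so by \cref{lem:p3clean} every component of $G[V_i]$ meeting $B$ is a clique. The residual instance splits over its connected components, and every component avoiding $B$ lies inside a component of $G-B$. Such components are solved recursively at cost $F(n/2)$, which gives $F(n)\le n^{\Oh(\log n)}\cdot F(n/2)\cdot n^{\Oh(1)}=n^{\Oh(\log^2 n)}$, provided the components meeting $B$ can be solved in polynomial time.

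That last step is the main obstacle: a component meeting $B$ may contain many layer-cliques glued together by cross-layer edges. The plan has three parts.
\begin{enumerate}
\item \emph{Cross-layer edges.} An edge $uv$ with $L'(u)=\{1\}$ and $L'(v)=\{2\}$ never constrains a solution, so we delete it. We then work with layer-connected components, meaning components of the graph in which two vertices are joined if they are adjacent and share a possible side.
\item \emph{Simplicial vertices and the reduction rule.} Each $b\in B\cap V_i$ is simplicial in $G[V_i]$, since its $V_i$-neighbourhood lies inside a clique. Now suppose $b\in B$ has $L'(b)=\{i\}$. A solution takes at most one vertex $u$ of this clique into $X_i$. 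If $\rev(b,i)\ge\rev(u,i)$ for all such $u$, an exchange argument shows we may put $b$ into $X_i$ and remove $i$ from the lists of its $V_i$-neighbours. Otherwise we remove $b$'s neighbours of smaller revenue from consideration. The exact form of this comparison still needs working out: it must not break the exchange argument for the other layer, because a swapped vertex may change its interactions there.
\item \emph{Two-clique cover.} After exhaustive application of the rule, we show that every layer-connected component meeting $B$ is covered by two cliques, one coming from each layer's clique through a vertex of $B$. The idea is that any path leaving these cliques would either use a removable cross-layer edge or produce an induced $P_3$ rooted at $B$ inside one layer. A component covered by two cliques has only $\Oh(n^2)$ feasible choices of $(X_1,X_2)$ restricted to it, namely at most one vertex per clique per side, so we enumerate them directly.
\end{enumerate}
Proving the two-clique cover, and making the reduction rule precise enough to guarantee it, is where I expect most of the work to be.
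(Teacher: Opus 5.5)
\textbf{There is a genuine gap, in the reduction rule.} Your setup matches the paper's proof: the central bag, pairs of containers from \cref{thm:H-clean-containers-is}, the layers $V_1,V_2$, discarding crossing edges, and recursing on layer-connected components that avoid $B$. However, the decisive step is left open, and the partial rule you state is not enough. Consider a vertex $b\in B$ with $L'(b)=\{i\}$ that has a same-layer neighbour $u$ with $\rev(u,i)>\rev(b,i)$. Your rule only strips $i$ from its lighter neighbours, so $b$ survives. The two-clique cover then fails, because it needs a vertex $v\in B$ lying in \emph{both} layers. In that case every $y\in V_2$ in the layer-$1$ clique $K_1$ of $v$ is adjacent to $v\in V_2$, hence lies in the layer-$2$ clique $K_2$ of $v$, and so $K_1\cup K_2$ is closed under layer edges. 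If all $B$-vertices of a layer-connected component have singleton lists, a vertex of $K_1\cap V_2$ can instead continue into an arbitrary layer-$2$ component avoiding $B$. For example, take the induced path $b-y-w_1-w_2-w_3$ with only $b\in B$, lists $\{1\},\{1,2\},\{2\},\{2\},\{2\}$, and $\rev(y,1)>\rev(b,1)$. Both layers are $(B,P_3)$-clean, your rule changes nothing, and the component is not covered by two cliques. Such components also need not be small, since many singleton-list $B$-vertices can hang off one component of $G-B$, so recursion does not rescue the argument. Your worry about the other layer is unfounded: the exchange only removes a vertex from $X_i$ and inserts $b$, whose list is $\{i\}$, so $X_{3-i}$ is untouched.

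\textbf{The missing idea is a local-ratio revenue shift} that removes every singleton-list $B$-vertex unconditionally; your first case is a special case of it. First delete $i$ from $L'(v)$ whenever $\rev(v,i)=0$. For $b\in B$ with $L'(b)=\{i\}$, let $N^-$ and $N^+$ be the $V_i$-neighbours $u$ of $b$ with $\rev(u,i)<\rev(b,i)$ and $\rev(u,i)\ge\rev(b,i)$, respectively. Delete $i$ from the lists of $b$ and of every vertex of $N^-$, decrease $\rev(u,i)$ by $\rev(b,i)$ for every $u\in N^+$, and credit $\rev(b,i)$. This is safe for the following reasons. The set $N^-\cup N^+$ is a clique and $\rev(b,i)>0$, so an optimal $X_i$ contains either $b$ or exactly one neighbour $u$ of $b$, and $u\in N^+$ by exchange. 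In both cases this yields a reduced solution whose revenue is exactly $\rev(b,i)$ smaller. Conversely, a reduced solution lifts back, adding $b$ if it uses no neighbour of $b$. Hence $\OPT=\OPT'+\rev(b,i)$. After exhaustive application every remaining $B$-vertex lies in $V_1\cap V_2$. The closure argument above then shows that each layer-connected component meeting $B$ equals $K_1\cup K_2$, which is solved by brute force.
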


\begin{proof}
    Let $(G,L,\rev)$ be an instance of \textsc{Max List $2$-Colorable Induced Subgraph} with $n$ vertices.
    The algorithm is recursive. If $n \leq 1$, then the problem can be trivially solved by brute force.
    Thus, suppose that $n \geq 2$ and that we can solve every instance with $n' < n$ vertices in time $F(n')$.

    Define $k'=8k$. Similarly as before, we first apply \cref{thm:computedecomposition} and then \cref{lem:balanced-bag} to either conclude that $\treepi_{P_3}(G) > k$ or obtain a set $B \subseteq V(G)$ such that $\pi_{P_3}(G,B) \leq k'$ and every component of $G-B$ has at most $n/2$ vertices.

    Then, we apply \cref{thm:H-clean-containers-is} to obtain a family $\cF_{P_3}(G,B)$ of size $n^{\Oh(\log n)}$ with the properties listed in the lemma statement.
    In particular, if $(X_1,X_2)$ is an (unknown) optimum solution, then for each $i \in \{1,2\}$ there is $(I_i,F_i) \in \cF_{P_3}(G,B)$ such that $I_i \subseteq X_i \subseteq I_i \cup F_i$.

    \paragraph{General plan.}
    For every pair $(I_1,F_1),(I_2,F_2) \in \cF_{P_3}(G,B)$, we proceed as follows.
    First, if $I_1 \cap I_2 \neq \emptyset$, or $I_i$ contains a vertex $v$ such that $i \notin L(v)$, then we discard this pair.
    Next, we set $L'(v) = L(v) \cap \{ i ~|~ v \in F_i\}$ for every $v \in F_1 \cup F_2$.
    Consider the instance $(G',L',\rev)$, where $G'=G[(F_1 \cup F_2) \setminus (I_1 \cup I_2)]$.

    In each of such instances, we will find a maximum-revenue solution $(X_1',X_2')$.
    Note that $(X_1' \cup I_1, X_2' \cup I_2)$ is a feasible solution to the original instance $(G,L,\rev)$. Indeed, $I_1$ and $I_2$ are independent sets, the lists are respected by construction, and every vertex of $X_i'$ belongs to $F_i \subseteq V(G) \setminus N[I_i]$, so $X_i'$ has no neighbor in $I_i$ and is disjoint from $I_{3-i}$.
    Thus, the total revenue of the solution corresponding to the pair $(I_1,F_1),(I_2,F_2)$ is obtained by summing the revenue of $(X_1',X_2')$ and the revenue of vertices in $I_1 \cup I_2$.
    We will return the solution of largest revenue; clearly, it will be an optimum solution to the original instance.

    \paragraph{Cleaning.}
    Thus, let us focus on solving one instance $(G',L',\rev)$ for a fixed pair $(I_1,F_1),(I_2,F_2)$.
    Define $B' = B \cap V(G')$.
    Let us introduce two straightforward cleaning steps.
    \begin{enumerate}
        \item If there is a vertex with an empty list, then we remove it from $G'$.
        \item If, for some $v \in V(G')$ and $i \in \{1,2\}$, it holds that $\rev(v,i) = 0$, then we remove $i$ from $L'(v)$.
    \end{enumerate}
    
    We apply these cleaning steps exhaustively; they clearly can be performed in polynomial time, and they do not change the optimum revenue of the instance.
    For simplicity we keep the notation $(G',L',\rev)$ for the resulting instance, and $B' = B \cap V(G')$ for the current $G'$.
    For $i \in \{1,2\}$, let $V_i = \{v \in V(G') ~|~ i \in L'(v)\}$.
    Note that $V_i \subseteq F_i$ and thus, $G'[V_i]$ has no induced $P_3$ intersecting $B' \cap V_i$.

    \paragraph{Reducing $i$-simplicial vertices.}
    For $i \in \{1,2\}$, we say that a vertex $v$ is \emph{$i$-simplicial} if $v \in V_i$ and the neighborhood of $v$ in $G'[V_i]$ is a clique.

    \begin{claim}\label{clm:simplicial}
        For $i \in \{1,2\}$, every vertex in $B' \cap V_i$ is $i$-simplicial.
    \end{claim}
    \begin{claimproof}
        Let $v \in B' \cap V_i$ and define $N = N[v] \cap V_i$.
        The graph $G'[N]$ is connected and, as induced subgraph of $G'[V_i]$, it is $(B'\cap N,P_3)$-clean.
        Thus, by \cref{lem:p3clean}, $G'[N]$ is a clique.
    \end{claimproof}

    Now, let us introduce a reduction rule that allows us to deal with $i$-simplicial vertices.
    \begin{description}
        \item[Reduction Rule] Let $v \in B'$ be a vertex with $L'(v) = \{i\}$.
        Let $N^+$ (resp., $N^-$) be the set of neighbors $u$ of $v$ in $V_i$ with $\rev(u,i) \geq \rev(v,i)$ (resp., $\rev(u,i) < \rev(v,i)$).
        Delete $i$ from lists of all vertices in $\{v\} \cup N^-$ and decrease $\rev(u,i)$ for every vertex $u \in N^+$ by $\rev(v,i)$. Then, exhaustively apply the cleaning steps.
    \end{description}

    Clearly, the reduction rule can be performed in polynomial time.
    Note that the cleaning steps in particular remove $v$ from the graph.
    Now, let us argue that the reduction rule is safe in the following sense.

    \begin{claim}\label{clm:reduction-rule}
        Let $\OPT$ be the optimum revenue for the instance before reduction and let $\OPT'$ be the optimum revenue for the instance after reduction. Then, $\OPT = \OPT' + \rev(v,i)$.       
    \end{claim}
    \begin{claimproof}
    By symmetry, let $i=1$.
    By \cref{clm:simplicial}, $v$ is $1$-simplicial, and thus
    $N^- \cup N^+$ is a clique in $G'[V_1]$.

    We first prove that $\OPT \leq \OPT' + \rev(v,1)$.
    Let $(X_1,X_2)$ be an optimum solution of the original instance, of revenue $\OPT$.

    If $v\in X_1$, then $(X_1\setminus\{v\},X_2)$ is feasible for the
    reduced instance and has revenue $\OPT-\rev(v,1)$.
    Suppose now that $v\notin X_1\cup X_2$.
    Since $\rev(v,1)>0$, the set $X_1$ must contain a neighbor of $v$,
    because otherwise we could add $v$ to $X_1$ and obtain a better
    solution. As the neighborhood of $v$ in $V_1$ is a clique, there is
    exactly one neighbor $u$ of $v$ in $X_1$.
    If $u\in N^-$, then replacing $u$ by $v$ in $X_1$ yields a feasible
    solution of larger revenue, a contradiction. Hence $u\in N^+$.
    Therefore, $(X_1,X_2)$ is feasible for the reduced instance, and
    its revenue there is exactly $\OPT-\rev(v,1)$, because only the value
    of $u$ changes. Consequently,
    \[
        \OPT' \geq \OPT-\rev(v,1),
    \]
    and therefore $\OPT \leq \OPT'+\rev(v,1)$.

    We now prove that $\OPT \geq \OPT' + \rev(v,1)$.
    Let $(X'_1,X'_2)$ be an optimum solution of the reduced instance.

    If $X'_1$ contains a neighbor $u$ of $v$, then necessarily $u\in N^+$.
    Hence $(X'_1,X'_2)$ is feasible for the original instance as well, and
    its revenue increases by exactly $\rev(v,1)$ when we go back to the
    original instance, because the contribution of $u$ is restored.
    Thus, in this case, the original instance has a feasible solution of
    revenue $\OPT'+\rev(v,1)$.

    Otherwise, $X'_1$ contains no neighbor of $v$.
    Since $L'(v)=\{1\}$ in the original instance, the pair
    $(X'_1\cup\{v\},X'_2)$ is feasible for the original instance, and its
    revenue is again $\OPT'+\rev(v,1)$.

    This completes the proof of claim.
    \end{claimproof}

    So, from now on, assume that the reduction rule is applied exhaustively;
    we keep denoting the instance $(G',L',\rev)$.
    We note that following the proof of \cref{clm:reduction-rule}, we can turn an optimum solution of the reduced instance into an optimum solution of the instance before reduction rule was applied.

 \paragraph{Layer-connected components.}
    Next, similarly as before, we observe that we can split the current instance into independent instances, each corresponding to a component of $G'$. However, we can further refine this splitting.

    An edge $uv$ of $G'$ is called \emph{crossing} if $u \in V_1 \setminus V_2$ and $v \in V_2 \setminus V_1$, or vice versa. If $uv$ is not crossing, i.e., there is $i \in \{1,2\}$ such that $u,v \in V_i$, then it is a \emph{layer edge}.
    Note that crossing edges are not relevant for the problem as lists already forbid putting both $u$ and $v$ into the same independent set.

    A \emph{layer-connected component} of a graph is a maximal induced subgraph where any two vertices are connected by a path consisting of layer edges only. Note that each layer-connected component is in particular connected, and it might contain crossing edges.
    Furthermore, layer-connected components can be computed in polynomial time by finding components of the graph obtained from $G'$ by removing all crossing edges.

    For each layer-connected component of $G'$, we can solve the problem independently, and the optimum revenue of the original instance is equal to the sum of optimum revenues of all components.
    We remark that even though removing edges might create many $P_3$s, each considered subinstance is still an induced subgraph of $G'$, and thus, it is still $(B',P_3)$-clean.

    So, from now on, we can assume that $G'$ is layer-connected.

    \paragraph{Solving reduced layer-connected instances.}
    Note that if $B' = \emptyset$, then $G'$ is contained in a component of $G-B$ and thus has at most $n/2$ vertices. In this case, we can solve the instance $(G',L',\rev)$ recursively in time $F(n/2)$.
    
    Thus, suppose that $B' \neq \emptyset$.
    It turns out that after applying the reduction rule and splitting into layer-connected components, the structure of $G'$ is very simple.

    \begin{claim}\label{clm:twocliques}
        If $B' \neq \emptyset$, then $G'[V_1]$ and $G'[V_2]$ are cliques.
    \end{claim}
    \begin{claimproof}
        Note that $B' \subseteq V_1 \cap V_2$, as all vertices in $B'$ with a singleton list have been removed by the reduction rule.        
        Pick any $v \in B'$. Thus $v \in V_1 \cap V_2$.
        For $i \in \{1,2\}$, let $K_i$ be the vertex set of the component of $G'[V_i]$ containing $v$.
        By \cref{lem:p3clean}, both $K_1$ and $K_2$ induce cliques in $G'$.

        We claim that $V(G') = K_1 \cup K_2$.
        Suppose for contradiction that there is a vertex outside $K_1 \cup K_2$.
        As $G'$ is layer-connected, there is a path $P$ from $v$ to such a vertex consisting only of layer edges.
        Let $yz$ be the first edge of $P$ such that $y \in K_1 \cup K_2$ and $z \notin K_1 \cup K_2$.
        By symmetry, we may assume that $y,z \in V_1$ (here we use that $yz$ is a layer edge). We claim that $y \in K_1$.

        Suppose that $y \in K_2 \setminus K_1$.
        Since $v \in K_2$ and $K_2$ is a clique, we have $yv \in E(G')$.
        Now, since $v \in V_1$ and $y \in V_1$, they must be in the same component of $G'[V_1]$.
        Consequently, $y$ belongs to $K_1$, a contradiction.

        Thus, we indeed have $y \in K_1$.
        Since $y,z \in V_1$ and $yz \in E(G')$, the vertex $z$ also belongs to the component of $G'[V_1]$ containing $v$, namely $K_1$, a contradiction.

        Consequently, $V(G')$ has no vertex outside $K_1 \cup K_2$.
        To finish the proof, suppose there is some $u \in K_2 \cap V_1 \setminus K_1$ (the other case is symmetric).
        Since $u \in K_2$ and $K_2$ is a clique, there is an edge $uv$ in $G'$.
        Since $u,v \in V_1$, they must be in the same component of $G'[V_1]$, and thus $u \in K_1$, a contradiction.
    \end{claimproof}

        By \cref{clm:twocliques}, any reduced instance can be solved very easily, as any solution will contain at most one vertex from $V_1$ and at most one vertex from $V_2$. Such an instance can be solved in polynomial time by brute force.

        \paragraph{Running time.}
        Summing up, after applying cleaning, the reduction rule exhaustively, and splitting into layer-connected components,
        every layer-connected instance results in an instance that can be either solved in time $F(n/2)$ by applying recursion, or in polynomial time using \cref{clm:twocliques}.
        As the total number of instances considered for one pair of elements of $\cF_{P_3}(G,B)$ is polynomial, the number of leaves in the recursion tree is bounded by:
        \[
        F(n) \leq \left( n^{\Oh(\log n)} \right)^2 \cdot n^{\Oh(1)} \cdot F(n/2) = n^{\Oh(\log^2 n)}.
        \]
        As all local computation can be performed in polynomial time, the overall running time is $n^{\Oh(\log^2 n)}$, as claimed.
\end{proof}

%% file: sec-blob.tex
In this section we show that the property of having bounded induced-$P_3$-packing treewidth is preserved under the so-called \emph{blob graph} operation. This allows us to solve several problems in quasipolynomial time in graphs of bounded induced-$P_3$-packing treewidth, by reducing them to corresponding instances of \MWIS.

Let $G$ be a graph. The \emph{blob graph} of $G$, denoted $G^\circ$, is
the graph defined as follows:
\begin{align*}
       V(G^\circ) = & \{X\subseteq V(G) ~|~ X\ne\emptyset\text{ and } G[X]\text{ is connected}\},\\
       E(G^\circ) = & \{XY ~|~ X\cap Y\neq\emptyset \text{ or there is an edge of } G \text{ between } X \text{ and } Y\}.
\end{align*}
In other words, $XY$ is an edge if and only if $G[X \cup Y]$ is connected.

\begin{lemma}\label{lem:P3-blob-preservation}
For every graph $G$ it holds that $\treepi_{P_3}(G) = \treepi_{P_3}(G^\circ)$.  
\end{lemma}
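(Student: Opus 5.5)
The plan is to prove the two inequalities separately. The easy direction, $\treepi_{P_3}(G)\le\treepi_{P_3}(G^\circ)$, follows from the fact that $G$ is an induced subgraph of $G^\circ$. The singletons $\{v\}$, $v\in V(G)$, are vertices of $G^\circ$, and two singletons $\{u\},\{v\}$ are adjacent in $G^\circ$ exactly when $uv\in E(G)$. So \cref{lem:structural-package}~(\ref{sprop:3}) gives this inequality immediately.

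For the converse, fix a tree decomposition $(T,\beta)$ of $G$ attaining $\treepi_{P_3}(G)$. I will lift it to $G^\circ$ by setting $\beta^\circ(t)=\{X\in V(G^\circ) ~|~ X\cap\beta(t)\neq\emptyset\}$. For $X\in V(G^\circ)$, let $T_X$ be the set of nodes $t$ with $X\in\beta^\circ(t)$; then $T_X$ is the union of the subtrees $T_x=\{t ~|~ x\in\beta(t)\}$ over $x\in X$. I then check the three axioms.
\begin{itemize}
\item Vertex coverage is clear, since $X\neq\emptyset$.
\item Connectivity of $T_X$: the union of subtrees $T_x$, $x\in X$, is connected because $G[X]$ is connected and, for every edge $xy$ of $G[X]$, the subtrees $T_x$ and $T_y$ share a node.
\item Edge coverage: for an edge $XY$ of $G^\circ$, either $X\cap Y\ni z$, and any node of $T_z$ contains both $X$ and $Y$; or there is an edge $xy$ of $G$ with $x\in X$, $y\in Y$, and a bag containing $x,y$ contains both $X$ and $Y$ in $\beta^\circ$.
\end{itemize}
I expect this verification to be the only slightly fiddly point, and it is routine.

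The main step is to show that $\pi_{P_3}(G^\circ,\beta^\circ(t))\le\pi_{P_3}(G,\beta(t))$ for every node $t$. Take a $\beta^\circ(t)$-rooted induced $P_3$-packing in $G^\circ$, consisting of induced paths $X^j_1-X^j_2-X^j_3$ for $j\in[r]$. For each $j$ let $Z_j=X^j_1\cup X^j_2\cup X^j_3\subseteq V(G)$. I will use the following three facts.
\begin{itemize}
\item Each $G[Z_j]$ is connected, since consecutive blobs have connected union.
\item Each $Z_j$ intersects $\beta(t)$, since some $X^j_\ell\in\beta^\circ(t)$.
\item Each $G[Z_j]$ is not complete. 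Indeed, $X^j_1X^j_3\notin E(G^\circ)$ means $X^j_1,X^j_3$ are disjoint and anticomplete in $G$, so $Z_j$ contains a nonadjacent pair.
\end{itemize}
Applying \cref{lem:p3clean} in contrapositive form to the connected graph $G[Z_j]$ with the nonempty set $Z_j\cap\beta(t)$, we get that $G[Z_j]$ contains an induced $P_3$ meeting $\beta(t)$. This $P_3$ is induced in $G$ as well.

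It remains to check that these $r$ paths form an induced packing in $G$. For $j\neq j'$, every blob of the $j$-th path is nonadjacent in $G^\circ$ to every blob of the $j'$-th path, which means they are disjoint and anticomplete in $G$. Hence $Z_j$ and $Z_{j'}$ are disjoint and anticomplete, and so are the chosen $P_3$'s. This gives a $\beta(t)$-rooted induced $P_3$-packing of size $r$ in $G$, proving the bound. Taking the maximum over $t$ yields $\treepi_{P_3}(G^\circ)\le\pi_{P_3}(G^\circ,(T,\beta^\circ))\le\pi_{P_3}(G,(T,\beta))=\treepi_{P_3}(G)$, which completes the equality.
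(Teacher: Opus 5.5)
Your proof is correct and follows essentially the same route as the paper: the induced-subgraph argument for one inequality, the lifted decomposition $\beta^\circ(t)=\{X ~|~ X\cap\beta(t)\neq\emptyset\}$ for the other, and \cref{lem:p3clean} applied to the connected, non-complete union $X^j_1\cup X^j_2\cup X^j_3$ to extract pairwise anticomplete $\beta(t)$-rooted induced $P_3$'s. You differ from the paper only cosmetically. You verify the tree-decomposition axioms explicitly where the paper cites them. You also apply \cref{lem:p3clean} directly with the root set $Z_j\cap\beta(t)$, whereas the paper first shows that every vertex of $Z_j$ lies on an induced $P_3$.
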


\begin{proof}
Note that $G$ is an induced subgraph of $G^\circ$ (induced by singleton sets),
and therefore $\treepi_{P_3}(G)\leq \treepi_{P_3}(G^\circ)$.
It remains to prove the reverse inequality.

Let $\cT = (T,\beta)$ be a tree decomposition of $G$ with $\pi_{P_3}(G,\beta(t)) \leq k$
for every $t \in V(T)$, where $k = \treepi_{P_3}(G)$.
We define a tree decomposition $\cT^\circ = (T,\beta^\circ)$ of $G^\circ$ as follows.
For every node $t \in V(T)$, define
\[
    \beta^\circ(t) = \{X \in V(G^\circ) ~|~ X \cap \beta(t) \neq \emptyset\}.
\]
It is straightforward to verify that $\cT^\circ$ (see e.g.~\cite{DBLP:conf/esa/LimaMMORS24}) is a tree decomposition of $G^\circ$.

For contradiction, suppose that there is a node $t \in V(T)$ such that for $B^\circ = \beta^\circ(t)$,
it holds that $\pi_{P_3}(G^\circ,B^\circ) = k' > k$.
Let $\{P^1,\ldots,P^{k'}\}$ be a $B^\circ$-rooted induced $P_3$-packing in $G^\circ$.

Consider one such path $P^i$ and denote its consecutive vertices by $X^i_1-X^i_2-X^i_3$.

\begin{claim}
    The graph $G[X^i_1 \cup X^i_2 \cup X^i_3]$ is connected and not complete.    
\end{claim}
\begin{claimproof}
    The connectedness follows directly from the definition of edges of the blob graph.
    On the other hand, $X^i_1$ and $X^i_3$ are nonadjacent in $G^\circ$,
    and therefore they are disjoint and anticomplete in $G$.
    Thus, $G[X^i_1 \cup X^i_2 \cup X^i_3]$ is not complete.
\end{claimproof}

\begin{claim}
    Every vertex $v \in X^i_1 \cup X^i_2 \cup X^i_3$ belongs to an induced $P_3$ in $G[X^i_1 \cup X^i_2 \cup X^i_3]$.
\end{claim}
\begin{claimproof}
    Pick any $v \in X^i_1 \cup X^i_2 \cup X^i_3$ and  suppose that it does not belong to any induced $P_3$ in $G[X^i_1 \cup X^i_2 \cup X^i_3]$.
    This means that $G[X^i_1 \cup X^i_2 \cup X^i_3]$ is connected and $(\{v\},P_3)$-clean.
    By \cref{lem:p3clean}, this means that $G[X^i_1 \cup X^i_2 \cup X^i_3]$ is complete, a contradiction.
\end{claimproof}

Observe that for every $i \in [k']$, there is a vertex $v_i \in X^i_1 \cup X^i_2 \cup X^i_3$ that belongs to $\beta(t)$. Applying the previous claim, we conclude that there is an induced $P_3$ in $G[X^i_1 \cup X^i_2 \cup X^i_3]$ that intersects $\beta(t)$.
As paths $P^1,\ldots,P^{k'}$ are pairwise anticomplete in $G^\circ$, the corresponding induced $P_3$'s in $G$ are also pairwise anticomplete.
This means that there is a $\beta(t)$-rooted induced $P_3$-packing of size $k' > k$ in $G$, a contradiction.
\end{proof}

\paragraph{Algorithmic consequences.}

\cref{lem:P3-blob-preservation} allows us to reduce several problems to solving \MWIS in induced subgraphs of blob graphs.
For example, for every fixed finite family $\cH$ of connected graphs, the problem of finding a largest induced $\cH$-packing in a graph $G$ can be reduced to \MWIS in the subgraph of $G^\circ$ induced by vertices corresponding to connected induced subgraphs of $G$ that are isomorphic to a graph in $\cH$.
As the number of vertices in this induced subgraph of the blob graph is polynomial in $|V(G)|$ (as $\cH$ is fixed), the overall running time is quasipolynomial in $|V(G)|$.
See~\cite{DBLP:conf/esa/LimaMMORS24} for more details on this reduction.

\begin{corollary}
    For every fixed $k$ and every fixed finite family $\cH$ of connected graphs,
    the problem of finding a largest induced $\cH$-packing in a graph with $\treepi_{P_3} \leq k$ can be solved in quasipolynomial time.
\end{corollary}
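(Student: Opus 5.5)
The plan is to reduce to \MWIS in an induced subgraph of the blob graph $G^\circ$, in the same spirit as the reduction preceding the corollary. Let $\cH$ be the given fixed finite family of connected graphs. Let $\cX$ be the set of all $X \subseteq V(G)$ such that $G[X]$ is isomorphic to a member of $\cH$. Since every member of $\cH$ is connected, every $X \in \cX$ is a vertex of $G^\circ$. Let $h$ be the maximum number of vertices of a member of $\cH$. Then $|\cX| \leq n^{h}$, and $\cX$ can be enumerated in polynomial time. Let $G^\circ_\cH = G^\circ[\cX]$.

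The first step is to check that induced $\cH$-packings in $G$ are exactly the independent sets of $G^\circ_\cH$. For distinct $X, Y \in \cX$, the pair $XY$ is a non-edge of $G^\circ$ if and only if $X \cap Y = \emptyset$ and there is no edge of $G$ between $X$ and $Y$, i.e., if and only if $X$ and $Y$ are disjoint and anticomplete. Hence a set $\{X_1,\dots,X_m\} \subseteq \cX$ is independent in $G^\circ_\cH$ exactly when $G[X_1],\dots,G[X_m]$ form an induced $\cH$-packing. So a largest induced $\cH$-packing in $G$ is a maximum-cardinality independent set in $G^\circ_\cH$, and we use unit weights.

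The second step is to bound the parameter. By \cref{lem:P3-blob-preservation}, $\treepi_{P_3}(G^\circ) = \treepi_{P_3}(G) \leq k$. By \cref{lem:structural-package}~(\ref{sprop:3}), $G^\circ_\cH$, being an induced subgraph of $G^\circ$, satisfies $\treepi_{P_3}(G^\circ_\cH) \leq k$. Note that $G^\circ$ itself is exponentially large, but we never construct it: we only build $G^\circ_\cH$ directly, with vertex set $\cX$ and adjacency tested in $G$. The lemma is used purely as a structural statement.

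The final step is to run the algorithm of \cref{thm:mwisp4} on $G^\circ_\cH$ with unit weights and $\cH' = \{P_3\}$, which contains a path on at most $4$ vertices. Letting $N = |\cX| \leq n^h$, this takes time $N^{\Oh(\log^2 N)} = n^{\Oh(h^3 \log^2 n)}$, which is quasipolynomial in $n$. Because the parameter bound is already established, the algorithm cannot correctly report $\treepi_{P_3}(G^\circ_\cH) > k$, so it returns an optimum independent set. We output the corresponding packing.

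There is no real obstacle. The only points needing care are the following. First, every member of $\cH$ must be connected so that each $X \in \cX$ is a vertex of $G^\circ$; the corollary assumes this. Second, the time bound must be translated from $N$ back to $n$, which works since $\log N = \Oh(h \log n)$. Third, we must avoid materializing all of $G^\circ$, which we do by working only with $G^\circ_\cH$.
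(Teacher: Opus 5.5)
Your proposal is correct and follows the paper's argument. You reduce to unweighted \MWIS on the polynomial-size induced subgraph $G^\circ_\cH$ of the blob graph, bound its $\treepi_{P_3}$ by $k$ via \cref{lem:P3-blob-preservation} and \cref{lem:structural-package}~(\ref{sprop:3}), and then invoke \cref{thm:mwisp4}; along the way you also make explicit the details the paper leaves implicit: the packing–independent-set correspondence, never materializing $G^\circ$, and translating the running time from $N \le n^h$ back to $n$.
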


Another example is the \textsc{Distance-$d$ Independent Set} problem where, for fixed integer $d \geq 2$, we ask for a maximum-weight set of vertices at pairwise distance at least $d$.
If $d$ is even, this problem can be reduced to \MWIS in the subgraph of $G^\circ$ induced by vertices corresponding to balls of radius $d/2-1$ around each vertex in $G$.
Again, the details can be found in~\cite{DBLP:conf/esa/LimaMMORS24}.

\begin{corollary}
    For every fixed $k$ and even integer $d \geq 2$,
    the \textsc{Distance-$d$ Independent Set} problem in graphs with $\treepi_{P_3} \leq k$ can be solved in quasipolynomial time.
\end{corollary}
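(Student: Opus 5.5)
The statement is the corollary on \textsc{Distance-$d$ Independent Set} for even $d$ in graphs with $\treepi_{P_3}\le k$. The plan is to reduce it to \MWIS on an induced subgraph of the blob graph $G^\circ$. \cref{lem:P3-blob-preservation} together with \cref{lem:structural-package}~(\ref{sprop:3}) controls the induced-$P_3$-packing treewidth of that subgraph. We then invoke \cref{thm:mwisp4} via \cref{lem:structural-package}~(\ref{sprop:4}) and~(\ref{sprop:7}), since bounded $\treepi_{P_3}$ implies bounded $\treepi_{P_4}$.

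The construction is as follows. Let $r=d/2-1\ge 0$. For each $v\in V(G)$ let $X_v=\{u : \mathrm{dist}_G(u,v)\le r\}$. This set is nonempty and connected, because a shortest path to $v$ stays inside the ball, so $X_v\in V(G^\circ)$. Distinct vertices may have equal balls, so we keep one vertex per distinct ball, giving weight $\wei(v)$ to that copy. To handle coincidences cleanly, we instead work with the graph $G_d$ whose vertex set is $V(G)$, with $uv$ an edge iff $X_u\cup X_v$ is connected. If $X_u=X_v$ then $u=v$ whenever $r\ge 0$ distinguishes them. More simply, $X_u=X_v$ with $u\ne v$ forces $\mathrm{dist}(u,v)\le r<d$, and such $u,v$ are adjacent anyway. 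We therefore take the induced subgraph $G^\circ[\{X_v\}]$ and note that twins in the ball map are mutually adjacent in $G_d$. Duplicating a vertex into a true twin does not increase $\treepi_{P_3}$ by more than a factor of $2$. This follows by placing the twin in every bag of the original and observing that any $P_3$ through the twin can be rerouted through the original vertex, possibly after swapping one endpoint. This twin bookkeeping is the only fiddly point. An alternative, which I would try first, is to observe directly that ties are harmless: among $v$'s with the same ball, at most one can be chosen, and the heaviest always dominates, so we simply keep the heaviest.

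For correctness we must show that $X_u\cup X_v$ is connected iff $\mathrm{dist}_G(u,v)\le 2r+1=d-1$. If the distance is at most $2r+1$, a shortest $u$–$v$ path splits into a prefix within distance $r$ of $u$ and a suffix within distance $r$ of $v$, joined by an edge or a shared vertex. Conversely, a shared vertex or an edge between the balls gives a walk of length at most $2r+1$. Hence independent sets in $G^\circ[\{X_v\}]$ correspond exactly to vertex sets at pairwise distance at least $d$, with the same weight.

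For the running time, $G^\circ[\{X_v\}]$ has at most $n$ vertices and is computable in polynomial time by BFS. It is an induced subgraph of $G^\circ$, so its $\treepi_{P_3}$ is at most $k$, and hence its $\treepi_{P_4}$ is at most $k$ by \cref{lem:structural-package}~(\ref{sprop:7}). \cref{thm:mwisp4} then runs in $n^{\Oh(\log^2 n)}$. If that algorithm instead reports that $\treepi_{P_4}>k$, this is impossible under the hypothesis, so the reduction always returns an optimum solution. The main obstacle is the coincident-ball issue, resolved by keeping the heaviest representative as described above.
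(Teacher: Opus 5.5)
Your proposal is correct and takes the same route as the paper. The paper reduces to \MWIS on the induced subgraph of $G^\circ$ on balls of radius $d/2-1$, bounds its $\treepi_{P_3}$ via \cref{lem:P3-blob-preservation} and heredity, and applies \cref{thm:mwisp4}. Your check that two balls are adjacent in $G^\circ$ exactly when their centers are at distance at most $d-1$, and your handling of coincident balls by keeping the heaviest center, supply details the paper defers to the cited reference; the latter is sound because $X_u=X_v$ forces $\mathrm{dist}(u,v)\le r<d$ and gives $u,v$ identical distance-$(d-1)$ neighborhoods. The twin-duplication paragraph can be dropped.
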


Finally, \cref{lem:P3-blob-preservation} can be used to find approximate solutions for some slightly restricted version of unweighted $(\tw \leq r,\psi)$-\MWIS.
We say that a \textsf{CMSO}$_2$ formula $\psi$ is \emph{hereditary} if:
($i$) if $G \models \psi$, then $G' \models \psi$ for every induced subgraph $G'$ of $G$, and
($ii$) if $G_1 \models \psi$ and $G_2 \models \psi$, then $G_1 + G_2 \models \psi$, where $G_1 + G_2$ is the disjoint union of $G_1$ and $G_2$.
We note that many natural graph properties, like e.g., planarity, bounded degeneracy, or excluding a fixed graph as a minor, can be defined by hereditary \textsf{CMSO}$_2$ formulae.

The following result encapsulates the technique developed by Gartland, Lokshtanov, Pilipczuk, Pilipczuk, and Rzążewski~\cite{DBLP:conf/stoc/GartlandLMPPR24}.

\begin{theorem}\label{thm:alg-blobclosed}
Let $\cG$ be a hereditary class of graphs such that
\begin{enumerate}
    \item For every $\epsilon >0$ there is a (quasi)polynomial-time algorithm that,
    given an $n$-vertex weighted graph, returns either an independent set in $G$ of weight at least $(1-\epsilon)$ times optimum, or correctly concludes that $G \notin \cG$, and
    \item $G \in \cG$ if and only if $G^\circ \in \cG$.
\end{enumerate}
Let $r \geq 0$, $\varepsilon \in (0,1)$ be a real, and $\psi$ be a hereditary \textsf{CMSO}$_2$ formula.
There is an algorithm that, given a graph $G$, in (quasi)polynomial time returns one of the following outputs:
\begin{itemize}
\item a solution to $(\tw \leq r,\psi)$-\MWIS of size at least $(1-\varepsilon)$ times the optimum one, or,
\item a correct conclusion that no solution to $(\tw \leq r,\psi)$-\MWIS exists, or
\item a correct conclusion that $G \notin \cG$.
\end{itemize}
\end{theorem}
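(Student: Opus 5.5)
The plan is to follow the blob-graph reduction of Gartland, Lokshtanov, Pilipczuk, Pilipczuk, and Rzążewski~\cite{DBLP:conf/stoc/GartlandLMPPR24}, using only the two hypotheses on $\cG$. Fix $r$, $\psi$, $\varepsilon$ and an input graph $G$. The argument has three steps: split an optimum solution into constant-size pieces, pack such pieces via \MWIS in an induced subgraph of $G^\circ$, and use hypothesis~2 (plus heredity of $\cG$) so that hypothesis~1 applies to that subgraph.

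\textbf{Step 1: Decomposing an optimum solution.} Let $X^*$ be an optimum solution to $(\tw\le r,\psi)$-\MWIS. Since $\tw(G[X^*])\le r$, a standard separator argument applies: repeatedly take balanced separators of size $r+1$ in pieces larger than a threshold $c=c(r,\varepsilon)$. This yields $Z\subseteq X^*$ with $|Z|\le(\varepsilon/2)|X^*|$ such that every component of $G[X^*\setminus Z]$ has at most $c$ vertices. The components of $G[X^*\setminus Z]$ are pairwise anticomplete connected sets $Y_1,\dots,Y_m$. Each $G[Y_j]$ satisfies $\psi$ and has treewidth at most $r$, because $\psi$ is hereditary and treewidth is monotone. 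Also $\sum_j|Y_j|\ge(1-\varepsilon/2)|X^*|$.

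\textbf{Step 2: Reduction to \MWIS in a blob subgraph.} Let $\cY$ be the set of connected $Y\subseteq V(G)$ with $|Y|\le c$, $G[Y]\models\psi$ and $\tw(G[Y])\le r$. Then $|\cY|\le n^{c}$, and $\cY$ is computable in polynomial time, since $\psi$ and the treewidth bound can be checked on constant-size graphs. Let $G'=G^\circ[\cY]$ with weight $|Y|$ on each vertex $Y$. By the definition of $E(G^\circ)$, independent sets of $G'$ are exactly families of pairwise disjoint, pairwise anticomplete sets from $\cY$. Their union $U$ induces the disjoint union of the $G[Y]$. Since $\psi$ is closed under disjoint unions, $G[U]\models\psi$, and $\tw(G[U])\le r$, so $U$ is feasible. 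Conversely, $\{Y_1,\dots,Y_m\}$ is independent in $G'$ with weight at least $(1-\varepsilon/2)\OPT$. If $\cY=\emptyset$, no nonempty feasible set exists. In that case we return $\emptyset$ if the empty graph satisfies $\psi$, and otherwise report that no solution exists.

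\textbf{Step 3: Applying the hypotheses.} Run the algorithm of hypothesis~1 with accuracy $\varepsilon/2$ on $(G',|\cdot|)$. Its running time is (quasi)polynomial in $|V(G')|\le n^c$, hence (quasi)polynomial in $n$. If it returns an independent set, it has weight at least $(1-\varepsilon/2)^2\OPT\ge(1-\varepsilon)\OPT$, and we output the union of its members. If it concludes $G'\notin\cG$, then we may conclude $G\notin\cG$. Indeed, if $G\in\cG$, then $G^\circ\in\cG$ by hypothesis~2, and $G'$ is an induced subgraph of $G^\circ$, so $G'\in\cG$ because $\cG$ is hereditary. This contradiction justifies the third output.

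I expect Step 1 to be the main obstacle. It needs a clean quantitative statement that bounded-treewidth graphs can be shattered into pieces of size $c(r,\varepsilon)$ by deleting an $\varepsilon$-fraction of vertices. I would obtain it by recursive balanced separation in $G[X^*]$, whose separators have size at most $r+1$ by \cref{lem:balanced-bag}, charging separator vertices to the large pieces they split. The rest of the argument is bookkeeping.
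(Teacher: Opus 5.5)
Your proposal is correct and takes the same route the paper indicates. The paper does not prove this theorem itself: it attributes it to the blob-graph technique of \cite{DBLP:conf/stoc/GartlandLMPPR24} and sketches exactly your three steps. These are: shatter a bounded-treewidth solution into constant-size pieces by deleting an $\varepsilon$-fraction of its vertices, pack those pieces via \MWIS in an induced subgraph of $G^\circ$, and use hypothesis~2 together with heredity of $\cG$ to justify the ``$G \notin \cG$'' output. In Step~1, make sure your charging bounds the number of split pieces of size more than $c$ by $\Oh(|X^*|/c)$ rather than the naive level-by-level $\Oh((|X^*|/c)\log|X^*|)$, for instance by cutting a width-$r$ tree decomposition at lowest heavy bags; this is a standard fact, and the plan is sound.
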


For fixed $k$, let $\cG$ denote the class of graphs with $\treepi_{P_3} \leq k$. 
Combining \cref{thm:alg-blobclosed}, \cref{thm:mwisp4}, and \cref{lem:P3-blob-preservation}, we obtain the following.

\thmapproxcmsopthree*

%% file: sec-dbs.tex

The goal of this section is to prove \cref{thm:dbs-together}, that is, to show that graphs with bounded $\treepi_{\cC}$ or bounded $\treepi_{P_t}$, for any fixed $t$, admit balanced separators dominated by few vertices.
Before we start, we need a few definitions.

Let $(G,\wei)$ be a vertex-weighted graph. We say that a subset $C \subseteq V(G)$ is \emph{light} if $\wei(C) \leq \wei(G)/2$, and otherwise $C$ is \emph{heavy}.
A set $S\subseteq V(G)$ is a \emph{balanced separator} of $G$ if every component of $G-S$ is light.
For a function $f : \N \to \R$, we say that a class $\cG$ of graphs admits \emph{$f$-dominated balanced separators} if for every graph $G\in \cG$ and every weight function there exists a set $X \subseteq V(G)$ 
of size at most $f(|V(G)|)$ such that $N[X]$ is a balanced separator of $(G,\wei)$.

We are mainly interested in the case where $f$ is polylogarithmic.

\subsection{Induced packing of long paths}\label{sec:packing-of-longpaths}
For graphs of bounded $\treepi_{P_t}$, the function $f$ can in fact be taken to be $\Oh(kt)$, i.e., a constant.

\begin{lemma}\label{lem:treepi-Pt-dominated-separator}
For every $t,k$, the class of graphs $G$ satisfying $\treepi_{P_t}(G) \leq k$ has $(k+1)t$-dominated balanced separators.
\end{lemma}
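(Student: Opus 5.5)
Fix a graph $G$ with $\treepi_{P_t}(G)\le k$ and an arbitrary weight function $\wei$. The plan is to follow the outline in the technical overview. First, pick a tree decomposition $(T,\beta)$ witnessing $\treepi_{P_t}(G)\le k$. Apply \cref{lem:balanced-bag} to $(G,\wei)$ and $(T,\beta)$ to get a bag $B=\beta(t)$ such that every component of $G-B$ is light. Then take a maximum $B$-rooted induced $P_t$-packing in $G$. It has at most $k$ members, so its vertex set $Y$ satisfies $|Y|\le kt$. If $N[Y]$ is already a balanced separator, we output $X=Y$ and are done.

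Otherwise, some component $G'$ of $G-N[Y]$ is heavy. Since heavy components are pairwise disjoint, $G'$ is the unique heavy one.

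\textbf{Step 1: $G'$ meets $B$.} $G'$ is a connected subgraph of $G$. If it avoided $B$, it would lie inside a single component of $G-B$, which is light, a contradiction.

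\textbf{Step 2: $G'$ has no induced $P_t$ meeting $B$.} Any such path $Q$ lies in $V(G')\subseteq V(G)\setminus N[Y]$. Hence $Q$ is disjoint from and anticomplete to every member of the packing. Adding $Q$ would give a larger $B$-rooted induced $P_t$-packing, contradicting maximality. So $G'$ is $(B\cap V(G'),P_t)$-clean.

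\textbf{Step 3: Gy\'arf\'as path.} Choose $v\in B\cap V(G')$. Apply \cref{lem:gyarfas-path} to the connected weighted graph $(G',\wei|_{V(G')})$ and $v$. This gives an induced path $P$ in $G'$ starting at $v$ such that every component of $G'-N_{G'}[P]$ has weight at most $\wei(G')/2\le\wei(G)/2$. Since $G'$ is an induced subgraph of $G$, $P$ is an induced path of $G$ containing $v\in B$. Every induced subpath of $P$ through $v$ is a $B$-rooted induced path, and $P$ is in particular such a subpath of itself. By Step 2, $P$ therefore has at most $t-1$ vertices. Set $X=Y\cup V(P)$, so that $|X|\le kt+t-1<(k+1)t$.

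\textbf{Step 4: $N[X]$ is a balanced separator.} Let $C$ be a component of $G-N[X]$. Since $N[X]\supseteq N[Y]$, the component $C$ lies inside some component of $G-N[Y]$. If that component is not $G'$, it is light, and so is $C$. If it is $G'$, then $C$ is a connected subset of $V(G')\setminus N[P]$, where $N[P]\supseteq N_{G'}[P]$. Hence $C$ lies in a component of $G'-N_{G'}[P]$ and is light.

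I do not expect any real obstacle. The only points needing care are the weighted version of the central-bag lemma, which \cref{lem:balanced-bag} already provides, and the bound $|V(P)|\le t-1$ in Step 3, which follows directly from cleanness.
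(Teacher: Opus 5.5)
Your proof is correct and follows the paper's argument step for step: take the central bag $B$, let $Y$ be the vertex set of a maximum $B$-rooted induced $P_t$-packing, observe that the unique heavy component of $G-N[Y]$ meets $B$ and contains no induced $P_t$ through $B$, and add a Gy\'arf\'as path from a vertex of $B$ in that component, which has fewer than $t$ vertices. Your Step 4 simply spells out, via the distinction between $N_{G'}[P]$ and $N[P]$, the one-line conclusion the paper draws at the end.
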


\begin{proof}
Let $(G,\wei)$ be a vertex-weighted graph and let $\cT$ be a tree decomposition of $G$ with $\pi_{P_t}(\cT) \leq k$. Even though we will work with some induced subgraphs of $G$,
we will always define light and heavy subsets with respect to $\wei(G)$.

By \cref{lem:balanced-bag} applied to $(G,\wei)$ and $\cT$,
there is a set $B \subseteq V(G)$ such that $\pi_{P_t}(G,B) \leq k$
and every component of $G-B$ is light.
Let $Y$ be the vertex set of a maximum $B$-rooted induced $P_t$-packing, that is,
a largest family of pairwise disjoint and anticomplete induced $t$-vertex paths,
each intersecting $B$.
As $\pi_{P_t}(G,B) \leq k$, we have $|Y| \leq kt$.

Note that in the graph $G - N[Y]$, there is no $(B \setminus N[Y])$-rooted induced $P_t$,
as its existence would contradict the maximality of $Y$.

If every component of $G-N[Y]$ is light, then we are done: $N[Y]$ is a $kt$-dominated balanced separator.
Thus, assume that $G-N[Y]$ has a heavy component $G'$; it must be unique.
Note that $G'$ must intersect $B$, as every component of $G-B$ is light.
Pick any $v \in B \cap V(G')$ and apply \cref{lem:gyarfas-path} to $(G',\wei)$ and $v$.
Let $X$ be the vertex set of the resulting induced path starting at $v$.
As $G'$ has no $P_t$ rooted at $B \cap V(G')$, we have $|X| \leq t-1$.
Furthermore, every component of $G'-N[X]$ is light, and thus every component of $G-N[X \cup Y]$ is light.

Consequently, $N[X \cup Y]$ is a balanced separator of $(G,\wei)$ dominated by at most $|X| + |Y| \leq (k+1)t$ vertices. This completes the proof. 
\end{proof}

\subsection{Induced packing of cycles}
Now let us turn to graphs of bounded $\treepi_{\cC}$, where $\cC$ is the family of all cycles.
The strategy will be similar to the one used in the previous subsection: (1) apply \cref{lem:balanced-bag} to find a bag $B$ of a tree decomposition that splits the graph into light components, (2) remove a constant number of neighborhoods to eliminate all cycles intersecting $B$, and (3) split the unique heavy component of the resulting graph. However, the arguments will be slightly more complicated.

In particular, for step (2) we cannot proceed similarly as for graphs of bounded $\treepi_{P_t}$,
as a maximum $B$-rooted induced cycle packing can use an unbounded number of vertices even if $\pi_{\cC}(G,B)$ is bounded.
However, we can use the following result of Ahn and Kwon~\cite{Anh-Kwon} who proved that cycles intersecting a given set have the \emph{induced Erdős--Pósa property}.\footnote{The statement of Ahn and Kwon~\cite{Anh-Kwon} concerns $(\ell,B)$-cycles that are defined as non-necessarily induced -- but pairwise anticomplete -- cycles of length at least $\ell$, each intersecting $B$. However, every $(3,B)$-cycle contains an induced cycle intersecting $B$, and every induced cycle intersecting $B$ is a $(3,B)$-cycle. Hence the statement of Ahn and Kwon~\cite{Anh-Kwon} for $\ell=3$ is equivalent to the statement of \cref{thm:AK}.}

\begin{theorem}[Ahn and Kwon~\cite{Anh-Kwon}]
\label{thm:AK}
For all positive integers $k$, every graph $G$, and every
set $B \subseteq V(G)$, either $G$ contains
\begin{itemize}
    \item $k$ pairwise anticomplete induced $B$-rooted cycles, or
    \item a set $Y \subseteq V(G)$ with $|Y|=\Oh(k^3)$ such that  $G-N[Y]$ has no $B$-rooted cycle.
\end{itemize}
\end{theorem}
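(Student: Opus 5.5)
Since \cref{thm:AK} is cited from Ahn and Kwon, the plan below is how I would reconstruct its proof, by induction on $k$, rather than a summary of their argument. The bound $\Oh(k^3)$ suggests the induction step adds $\Oh(k^2)$ vertices to $Y$. The case $k=1$ is trivial: take $Y=\emptyset$ if there is no $B$-rooted cycle, and otherwise one cycle is a packing of size $1$.

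For the induction step, assume $G$ has no $k$ pairwise anticomplete induced $B$-rooted cycles. Let $C$ be a shortest induced cycle intersecting $B$. I split into two cases according to whether $|V(C)| \le c k^2$ for a suitable constant $c$.

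\emph{Short case.} Put $Y_0 = V(C)$. In $G - N[Y_0]$ there are no $k-1$ pairwise anticomplete $B$-rooted induced cycles, because adding $C$ to such a family would give $k$ of them in $G$. The induction hypothesis, applied to $G-N[Y_0]$ and $B\setminus N[Y_0]$, gives $Y'$ with $|Y'| = \Oh((k-1)^3)$ hitting all such cycles via closed neighborhoods. Every $B$-rooted cycle of $G$ either meets $N[Y_0]$ or lies in $G-N[Y_0]$, where it meets $N[Y']$ (here I use $N_{G-N[Y_0]}[Y'] \subseteq N_G[Y']$). Hence $Y = Y_0\cup Y'$ has size $\Oh(k^2)+\Oh((k-1)^3) = \Oh(k^3)$ with the right constant.

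\emph{Long case.} Every $B$-rooted induced cycle now has length more than $ck^2$, and this is the main obstacle. I would exploit the minimality of $C$ to make it almost geodesic: two vertices of $C$ at distance $d$ along $C$ cannot be joined by a much shorter path whose shortcut still closes a $B$-rooted cycle. Next I would pick a set $X$ of $\Oh(k^2)$ vertices on $C$ spaced so that consecutive picks are at distance about $|C|/(ck^2)$ along $C$. The aim is to show that any $B$-rooted cycle $D$ avoiding $N[X]$ must follow $C$ only in short segments between consecutive picks. It then attaches to the rest of the graph through few ``bridges'', so either $D$ is disjoint and anticomplete from a sub-arc of $C$ containing a $B$-vertex, or $D$ yields a shorter $B$-rooted cycle, contradicting minimality. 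Iterating this greedily, I expect either to build $k$ pairwise anticomplete $B$-rooted cycles, each anticomplete to the previous ones because of the length lower bound and the spacing, or to certify that $N[X]$ together with the inductive set $Y'$ hits everything.

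To make the long case rigorous I would first try a BFS-layering argument from $B\cap V(C)$. Long induced cycles through $B$ combined with few anticomplete ones should force $G$ to look like a thin ``cylinder'' around $C$, in which $\Oh(k^2)$ balls of radius $1$ cut every $B$-cycle. Failing that, I would reduce to the non-induced $(\ell,B)$-cycle formulation mentioned in the footnote and adapt the classical Erdős--Pósa ``frame'' argument, where the cubic bound would come from $k$ rounds each costing $\Oh(k^2)$ vertices.
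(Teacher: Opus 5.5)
The paper does not prove this theorem. It imports it from Ahn and Kwon, and the only argument it supplies is the footnote showing that their $(3,B)$-cycle version is equivalent to the induced one: a cycle through a vertex of $B$ contains, within its vertex set, an induced cycle through that vertex. Your fallback of passing to the non-induced formulation therefore buys nothing by itself, and your proposal has to stand alone. Your base case and short case are sound. A shortest $B$-rooted cycle is chordless. Every $B$-rooted cycle of $G-N[V(C)]$ is anticomplete to $C$, so that graph has no $k-1$ pairwise anticomplete ones. The recursion $f(k)\le ck^2+f(k-1)$ then gives the cubic bound. Even the geodesic property you want is true: $C$ is a shortest cycle through any fixed $b\in B\cap V(C)$, and such a cycle is isometric in $G$.

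The gap is the long case. It is a list of intentions, and its central step cannot work as designed. In your induction, the only thing that lowers $k$ is that $C$ is anticomplete to all of $G-N[V(C)]$. If you delete only $N[X]$ for $\Oh(k^2)$ spaced vertices $X\subseteq V(C)$, a surviving $B$-rooted cycle may still meet $C$ between two picks. A packing in $G-N[X]$ then cannot be extended by $C$, and being anticomplete to a sub-arc of $C$ is useless because an arc is not a cycle.

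Here is a concrete example. Let $B=V(G)$ and let $C$ be a long cycle. For $j\in[k-1]$, join a hub $h_j$ by very long internally disjoint paths to many closely spaced vertices of a short segment $S_j$ of $C$, with the segments far apart.
\begin{enumerate}
\item $C$ is the unique shortest $B$-rooted cycle, so you are in the long case.
\item Every other cycle meets $C$ and contains a hub, so there are no $k$ pairwise anticomplete $B$-rooted cycles.
\item Take any $X\subseteq V(C)$ of size $\Oh(k^2)$, with enough spokes per hub. Each $h_j$ still lies on a cycle formed by $h_j$, two consecutive spokes and the sub-arc of $S_j$ between them, and this cycle avoids $N[X]$.
\item These $k-1$ cycles are pairwise anticomplete in $G-N[X]$.
\end{enumerate}
So the induction hypothesis for $k-1$ cannot be invoked on $G-N[X]$. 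Moreover, the vertices that must enter $Y$ lie in the sets $N[h_j]$, off $C$. Length and spacing also give no anticompleteness in your greedy step, since long cycles can freely intersect.

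A sound skeleton would apply the induction to $G-N[V(C)]$ in the long case as well. This lowers $k$ without putting $V(C)$ into $Y$. You would then need $\Oh(k^2)$ vertices whose closed neighbourhoods meet every $B$-rooted cycle intersecting $N[V(C)]$. That second step is where all the difficulty lies, and neither your BFS-layering idea nor the frame idea is developed far enough to supply it.
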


Now we are ready to prove that graphs of bounded $\treepi_{\cC}$ admit balanced separators dominated by a logarithmic number of vertices.

\begin{lemma}
\label{lem:treepi-Cge3-log-dominated-separator-weighted}
For every $k$, the class of graphs $G$ satisfying $\treepi_{\cC}(G) \leq k$ has $f$-dominated balanced separators for $f(n) = \Oh( k^3 \log n )$.
\end{lemma}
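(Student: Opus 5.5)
We follow the three-step strategy outlined above, proving by induction on $n$ that every $n$-vertex graph $G$ with $\treepi_{\cC}(G)\le k$, and every weight function $\wei$, has a set $X$ of size at most $c k^3 (\log n + 1)$ with $N[X]$ a balanced separator. Here $c$ is the constant from \cref{thm:AK} (applied with $k+1$ in place of $k$). Throughout, lightness is measured with respect to the fixed total weight $\wei(G)$. To make the induction run we prove a slightly stronger statement. For every induced subgraph $G''$ of $G$ (which still satisfies $\treepi_\cC\le k$ by \cref{lem:structural-package}) and every threshold $W\ge \wei(G'')/2$, there is a set $X$ of size at most $ck^3(\log|V(G'')|+1)$ such that every component of $G''-N[X]$ has weight at most $W$. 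The proof of this version is identical to the one below.

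First, fix a tree decomposition $\cT$ of $G$ witnessing $\treepi_{\cC}(G)\le k$. By \cref{lem:balanced-bag}, some bag $B$ has the property that every component of $G-B$ is light. Since $\pi_\cC(G,B)\le k$, there are no $k+1$ pairwise anticomplete induced $B$-rooted cycles, so \cref{thm:AK} yields $Y$ with $|Y|=\Oh(k^3)$ such that $G-N[Y]$ has no $B$-rooted cycle. If $N[Y]$ is already balanced we are done. Otherwise let $G'$ be the unique heavy component of $G-N[Y]$. It meets $B$, and every cycle of $G'$ avoids $B':=B\cap V(G')$.

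Second, we analyze the structure of $G'$. Let $\widehat G$ be obtained from $G'$ by contracting each component of $G'-B'$ to a single vertex. We claim that $\widehat G$ is a tree. No two vertices of $B'$ can be joined by two internally disjoint paths, since together these paths would form a cycle through $B'$. The same argument shows that $B'$ induces a forest, and that a component $D$ of $G'-B'$ has at most one neighbor in each tree of $B'$. Any cycle of $\widehat G$ would lift to a cycle of $G'$ through $B'$ by routing inside the contracted components, which is impossible. Give each vertex of $\widehat G$ the weight of its preimage. A standard centroid argument on this weighted tree produces a vertex $z$ such that every component of $\widehat G - z$ has weight at most $\wei(G)/2$. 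The point is that the components of $\widehat G-z$ have total weight at most $\wei(G')-\wei(z)$, and we may walk towards any heavy subtree until none remains.

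Third, we handle the centroid $z$. If $z\in B'$, then $X=Y\cup\{z\}$ works, since $z\in N[z]$. If instead $z$ is a contracted component $D$ of $G'-B'$, then $D$ lies inside a component of $G-B$, so $|D|\le \wei$-light. For the induction, however, we need $D$ small in vertex count, not merely light in weight. This is the main obstacle. To handle it we apply \cref{lem:balanced-bag} with the uniform weight in order to get vertex halving, while separately controlling weight. Concretely, we use a bag that is balanced simultaneously for both measures, namely one with $\le n/2$ vertices and $\le \wei(G)/2$ weight per component. Such a bag exists because the nodes balancing each measure can be connected along the path between them in $T$, and the union of the bags on that path is still covered by $\pi_\cC$ bounds only per bag. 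Since that union is not itself a bag, the cleaner alternative I would try first is the following. Apply the strengthened inductive hypothesis to $G[D]$ with $|D|\le n/2$, chosen as a component of $G-B$ for the uniform-balanced bag, and run the weight argument inside it. This gives $X_D$ of size $ck^3(\log(n/2)+1)$ such that all components of $G[D]-N[X_D]$ have weight at most $\wei(G)/2$. Every component of $G-N[Y\cup X_D]$ is then either a piece of $D$, which is light, or is contained in a component of $\widehat G-z$ together with its preimage, which is also light. Since only one recursion happens per level and $n$ halves each time, the total size is $\Oh(k^3)\cdot \log n$.
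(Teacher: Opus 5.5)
\paragraph{Main gap.} The gap is in your third step. You apply the induction to $G[D]$ with $\wei$ restricted to $D$. Your thresholded ``strengthening'' follows immediately from the original statement, so it does not change this. You then assert that every component of $G-N[Y\cup X_D]$ lies either inside $D$ or inside the preimage of one component of $\widehat G-z$. That dichotomy is false. A component of $D-N[X_D]$ stays connected, through its neighbors in $B'$, to every branch of $G'-D$ hanging off it, and $\wei|_D$ carries no information about the weight of those branches.

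For a concrete counterexample, let $G$ be the caterpillar with spine $v_1\cdots v_m$ ($m\ge 3$) and a pendant leaf $b_i$ at each $v_i$. Set $\wei(b_i)=1$ and $\wei(v_i)=0$, and take the decomposition with two adjacent bags $B=\{b_1,\dots,b_m\}$ and $V(G)$. Then:
\begin{itemize}
\item $B$ is balanced for both measures;
\item $Y=\emptyset$ is a legal output of \cref{thm:AK}, since $G$ is acyclic;
\item $\widehat G$ is a star whose center, the contracted spine $D$, is the centroid;
\item $X_D=\emptyset$ is a legal output of the induction on $(G[D],\wei|_D)$, since all weights there are $0$.
\end{itemize}
Yet $G-N[Y\cup X_D]=G$ is heavy, so the final step fails.

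\paragraph{Missing idea.} You need to charge the hanging weight to $D$ before recursing. Since $N_{G'}(D)\subseteq B'$ and $G'$ has no cycle through $B'$, every component of $G'-D$ attaches to exactly one vertex of $D$. Two attachment vertices $u\neq v$ would close a cycle through $B'$ with a $u$--$v$ path in $D$. Let $A_v$ be the union of the components attached at $v$, and set $\wei'(v)=\wei(v)+\wei(A_v)$. Then $\wei'(D)=\wei(G')\le\wei(G)$. Induction on $(G[D],\wei')$ yields $X_D$ such that each component of $D-N[X_D]$, together with its attached sets $A_v$, has $\wei$-weight at most $\wei(G)/2$. Every other component of $G-N[Y\cup X_D]$ either lies in a single branch, which is light by the centroid choice, or is a component of $G-N[Y]$ other than $G'$. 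This is exactly the paper's argument.

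\paragraph{Second gap.} You never fix one set $B$ with both properties you use. ``$G'$ meets $B$'' needs every component of $G-B$ to be $\wei$-light, while $|D|\le n/2$ needs every component to have at most $n/2$ vertices. A single bag need not have both properties. Unions of bags along a path of $T$ also do not help, since their $\pi_\cC$ is not bounded. The paper takes $B=B_{\wei}\cup B_1$, the union of just the two balancing bags. This set inherits both properties and satisfies $\pi_\cC(G,B)\le 2k$, and the paper then applies \cref{thm:AK} with $2k+1$. Alternatively, you could keep only the uniform bag: if $G'$ misses $B$, recurse on $G'$ itself, which then has at most $n/2$ vertices.
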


\begin{proof}
The proof is by induction on $|V(G)|$. The base case is trivial, as for $|V(G)|=1$ the unique vertex forms a balanced separator.
Thus, suppose that $n \geq 2$ and for every proper induced subgraph of $G$ with $n'$ vertices and every weight function $\wei'$, $(G',\wei')$ has a balanced separator dominated by $c \cdot (2k+1)^3 (\log n'+1)$ vertices, where $c$ is a constant hidden in the $\Oh(\cdot)$ notation in \cref{thm:AK}.
We aim to prove that $(G,\wei)$ has a balanced separator dominated by $c \cdot (2k+1)^3 (\log n +1)$ vertices.

Let $(G,\wei)$ be a vertex-weighted graph and let $\cT$ be a tree decomposition of $G$ with $\pi_{\cC}(\cT) \leq k$. Again, for induced subgraphs of $G$ and weight function $\wei$,
we define light and heavy subsets with respect to $\wei(G)$.

By \cref{lem:balanced-bag} applied to $(G,\wei)$ and $\cT$,
there is a set $B_{\wei} \subseteq V(G)$ such that $\pi_{\cC}(G,B_{\wei}) \leq k$
and every component of $G-B_{\wei}$ is light.
Similarly, applying \cref{lem:balanced-bag} to $G$, uniform weight function, and $\cT$ again,
we obtain a set $B_\mathsf{1} \subseteq V(G)$ such that $\pi_{\cC}(G,B_\mathsf{1}) \leq k$ and every component of $G-B_\mathsf{1}$ has at most $n/2$ vertices.

Define $B = B_{\wei} \cup B_\mathsf{1}$.
Note that $\pi_{\cC}(G,B) \leq 2k$ and every component of $G-B$ is light \emph{and} has at most $n/2$ vertices.

As $\pi_{\cC}(G,B) \leq 2k$, there is no $B$-rooted induced cycle packing of size $2k+1$.
Therefore, applying \cref{thm:AK} to $G,B$, and $2k+1$ gives the second outcome:
a set $Y$ of size at most $c (2k+1)^3$ such that $G-N[Y]$ has no $B$-rooted cycle.

If every component of $G-N[Y]$ is light, then $N[Y]$ is a balanced separator dominated by $c (2k+1)^3$ vertices and we are done.
Otherwise, let $G'$ be the unique heavy component of $G-N[Y]$ and let $B' = B \cap V(G')$.
Note that $G'$ intersects $B'$ and has no cycle intersecting $B'$.
Let $C_1,\ldots,C_r$ be the components of $G'-B'$. Each of them is light and has at most $n/2$ vertices.

Let $G''$ be obtained by contracting each $C_i$ into a single vertex $c_i$.
We equip $G''$ with the weight function obtained from $\wei$ by setting $\wei(c_i) = \wei(C_i)$ for every $i \in [r]$.
Since $G'$ is connected, $G''$ is connected. Moreover, $G''$ is acyclic: any cycle in $G''$ would expand to a cycle in $G'$ intersecting $B'$, contradicting the choice of $G'$. Hence $G''$ is a tree.

By the centroid property of weighted trees, there exists a vertex $x$ of $G''$ such that every component of $G''-x$ has weight at most $\wei(G'')/2 = \wei(G')/2 \leq \wei(G)/2$, and is therefore light. If $x \in B'$, then every component of $G'-N[x]$ is contained in the expansion of some component of $G''-x$, so $N[Y \cup \{x\}]$ is a balanced separator in $(G,\wei)$ dominated by $c (2k+1)^3 + 1$ vertices.
Thus, suppose that $x = c_i$ for some $i \in [r]$. For brevity, denote $C_i$ by $C$. 

\begin{claim}\label{clm:components-attach-to-single-vertex}
        Every component of $G'-C$ attaches to a single vertex of $C$.
\end{claim}
\begin{claimproof}
        Suppose for a contradiction that there is a component $H$ of $G'-C$ that attaches to two distinct vertices in $C$. Let $u,v$ be two distinct neighbors of $H$ in $C$ so that their distance in $C$ is minimized.
        Let $P$ be a shortest $u$-$v$ path in $C$.
        Then $P$ is induced and $u,v$ are the only vertices of $P$ with neighbors in $H$.
        
        Now, let $u',v'$ be two neighbors of $u$ and $v$ in $H$, respectively, chosen in a way that their distance in $H$ is minimized (possibly $u'=v'$).
        Let $Q$ be a shortest $u'$-$v'$ path in $H$.

        By the choice of $u',v'$, no internal vertex of $Q$ is adjacent to $u$ or $v$. As shortest paths are induced, it follows that $P$, $Q$, and the edges $uu'$ and $vv'$ form an induced cycle in $G'$.
        As $C$ is a component of $G'-B'$, its neighbors in $G'$ are contained in $B'$, and thus the cycle intersects $B'$, a contradiction.
\end{claimproof}

For each $v \in V(C)$, let $A_v$ denote the union of vertex sets of components of $G'-C$ that attach to $v$.
We define a new weight function $\wei'$ on $C$ as follows: $\wei'(v) = \wei(v) + \wei(A_v)$ for every $v \in V(C)$.
By \cref{clm:components-attach-to-single-vertex}, the sets $A_v$ are pairwise disjoint and their union is $V(G') \setminus C$, so $\wei'(C) = \wei(G') \leq \wei(G)$.
Furthermore, $C$ is an induced subgraph of $G$ with at most $n/2$ vertices, so by \cref{lem:structural-package}~\eqref{sprop:3} we have $\treepi_{\cC}(C) \leq k$ and the induction hypothesis applies to $(C,\wei')$.
We obtain a set $X$ such that $N[X]$ is a balanced separator for $(C,\wei')$ and 
\[|X| \leq  c \cdot (2k+1)^3 (\log |V(C)|+1)  \leq c(2k+1)^3 (\log n/2 +1) = c(2k+1)^3 \log n.\]

\begin{claim}
        The set  $N[X \cup Y]$ is a balanced separator for $(G,\wei)$.              
\end{claim}
\begin{claimproof}
        Consider components of $G-N[X \cup Y]$.
        Every such component is either (i) a component of $G-N[Y]$ other than $G'$, or
        (ii) contained in $A_x$ for some $x \in C \cap N[X]$,
        or (iii) the union of a component of $C - N[X]$ and its corresponding sets $A_x$.
        The components of type (i) are light as $G'$ is the unique heavy component of $G-N[Y]$.
        The components of type (ii) are light as they are contained in components of $G' - C$, 
        and every such component is light by the choice of $C$.
        Finally, the weight of a component of type (iii) with respect to $\wei$ is equal to the weight of its corresponding component of $C-N[X]$ with respect to $\wei'$, so it is at most $\wei'(C)/2 = \wei(G')/2 \leq \wei(G)/2$, and hence it is light.        
\end{claimproof}

As the size of $X \cup Y$ is
\[      
   |X \cup Y| =  |Y| + |X| \leq c(2k+1)^3 + c(2k+1)^3 \log n = c(2k+1)^3 (\log n +1),
\]
the proof is complete.
\end{proof}

%% file: sec-computing.tex
In this section we discuss issues related to computing tree decompositions of bounded induced $\cH$-packing number.

\subsection{Approximation algorithm}
First, let us prove \cref{thm:computedecomposition}.
We will use the notion of blob graphs, introduced in \cref{sec:blob}.

Let $\cH \neq \emptyset$ be a fixed finite family of connected graphs.
Let $h=\max\{|V(H)| ~|~ H\in \cH\}$.
For a graph $G$ and a set $B \subseteq V(G)$, we define the following induced subgraphs of $G^\circ$.
\begin{align*}
        G^\circ_\cH  = & \ G^\circ[\{X \subseteq V(G) ~|~ G[X] \text{ is isomorphic to a member of } \cH\}],\\
        G^\circ_\cH(B)= & \ G^\circ_{\cH}[\{X\in V(G^\circ_{\cH}) ~|~ X\cap B\ne\emptyset\}].
\end{align*}

As induced $\cH$-packings in $G$ correspond to independent sets in $G^\circ_\cH$,
we have the following:
\[
        \pi_{\cH}(G,B) = \alpha(G^\circ_{\cH}(B)).
\]

\begin{lemma}
\label{lem:blob-equivalence}
Let $\cH \neq \emptyset$ be a fixed finite family of connected graphs. For every graph $G$,
$\treepi_{\cH}(G) =  \treealpha(G^\circ_{\cH})$.
Furthermore, given a tree decomposition of one graph,
in time polynomial in $|V(G)|$, $|V(G^\circ_\cH)|$, and the size of the decomposition,
one can compute a corresponding decomposition of the other.
\end{lemma}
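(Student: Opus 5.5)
We prove the two inequalities separately, each by an explicit transformation of tree decompositions; both transformations are clearly polynomial in the stated sizes, which gives the algorithmic part.

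\emph{From $G$ to $G^\circ_\cH$.} Let $(T,\beta)$ be a tree decomposition of $G$. Define $\beta^\circ(t)=\{X\in V(G^\circ_\cH) ~|~ X\cap\beta(t)\neq\emptyset\}$, exactly as in the proof of \cref{lem:P3-blob-preservation}, restricted to $V(G^\circ_\cH)$. We check that $(T,\beta^\circ)$ is a tree decomposition of $G^\circ_\cH$.
\begin{itemize}
\item Every $X$ is nonempty, so it lies in some bag.
\item For an edge $XY$, either some $v\in X\cap Y$ exists, and any bag containing $v$ contains both $X$ and $Y$; or there is an edge $uv$ of $G$ with $u\in X$, $v\in Y$, and a bag containing $u,v$ contains both.
\item The nodes whose bags contain $X$ form the union over $v\in X$ of the subtrees $T_v$. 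Since $G[X]$ is connected, and adjacent vertices have intersecting subtrees, this union is connected.
\end{itemize}
Then $\beta^\circ(t)$ is exactly $V(G^\circ_\cH(\beta(t)))$, so $\alpha(G^\circ_\cH[\beta^\circ(t)])=\pi_\cH(G,\beta(t))$ by the displayed identity. Hence $\treealpha(G^\circ_\cH)\le\treepi_\cH(G)$.

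\emph{From $G^\circ_\cH$ to $G$.} Let $(T,\gamma)$ be a tree decomposition of $G^\circ_\cH$. This direction is the main obstacle, since some vertices of $G$ lie in no copy of a member of $\cH$ and $G$-edges need not be witnessed. I would define
\[\beta(t)=\bigcup_{X\in\gamma(t)}X,\]
and then handle the vertices not covered by any member of $V(G^\circ_\cH)$ separately. For a vertex $v$, let $T_v=\bigcup_{X\ni v}T_X$. All blobs containing $v$ pairwise intersect, so they are pairwise adjacent in $G^\circ_\cH$. By the Helly property of subtrees, $T_v$ is in fact connected, with a common node.

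For an edge $uv$ with $u,v$ both covered, $T_u$ and $T_v$ each contain a node hosting the clique of blobs through $u$, respectively through $v$, but they need not meet. This is where I expect the difficulty. My first attempt would be to pass to a modified graph: add pendant copies ensuring every vertex and edge is covered, or argue via an equivalent decomposition. An alternative is to replace $G^\circ_\cH$ by the graph on the blob vertices plus singletons, then delete the singletons from the bags, charging them nothing because singletons not in $\cH$ contribute no packing.

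The packing bound itself is easy. Any $\beta(t)$-rooted $\cH$-packing $Y_1,\dots,Y_m$ is independent in $G^\circ_\cH$. Each $Y_i$ meets some blob $X_i\in\gamma(t)$, so $X_i Y_i$ is an edge or $X_i=Y_i$. Then $m$ is bounded by the maximum independent set of $G^\circ_\cH$ among the neighbors of the bag $\gamma(t)$, and showing this is at most a constant times $\alpha(\gamma(t))$, or exactly equal to it, needs care.

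Concretely, I would first try to prove that $\beta'(t)=\{v ~|~ \text{every } X\ni v \text{ lies in } \gamma(t)\}$, together with the uncovered vertices handled via an arbitrary extension, already gives equality. I expect the edge-coverage condition to be the crux of that attempt.
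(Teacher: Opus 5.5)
Your first direction, $\treealpha(G^\circ_\cH)\le\treepi_\cH(G)$ via $\beta^\circ(t)=\{X : X\cap\beta(t)\neq\emptyset\}$, is correct and matches the paper. There is a gap in the reverse inequality: it is never proved. You try several constructions and end with the hope that $\beta'(t)=\{v : \text{every } X\ni v \text{ lies in } \gamma(t)\}$ works, leaving the edge axiom as an open ``crux''. That candidate is exactly the paper's construction ($T_v=\bigcap_{X\ni v}T_X$, and $T_v=V(T)$ if no blob contains $v$), and the missing argument is short.

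Write $\mathcal{X}_v$ for the set of blobs containing $v$. For an edge $uv$ of $G$, any $X\in\mathcal{X}_u$ and $Y\in\mathcal{X}_v$ are equal or adjacent in $G^\circ_\cH$, since $G[X\cup Y]$ is connected through the edge $uv$. Blobs sharing a vertex are adjacent as well. So $\mathcal{X}_u\cup\mathcal{X}_v$ is a clique of $G^\circ_\cH$. By Helly, some node $t$ has $\mathcal{X}_u\cup\mathcal{X}_v\subseteq\gamma(t)$, and this $t$ lies in $T_u\cap T_v$. The same argument applied to $\mathcal{X}_v$ alone gives $T_v\neq\emptyset$, and $T_v$ is a subtree because it is an intersection of subtrees.

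Uncovered vertices need no pendant copies or singleton gadgets. Put them in every bag, which is the vacuous case of your own definition. This costs nothing, because every member of an $\cH$-packing consists of covered vertices, and it makes every edge at such a vertex trivially covered. The packing bound is then immediate: if a copy $Y$ meets $\beta'(t)$ in $v$, then $Y\in\mathcal{X}_v$ forces $Y\in\gamma(t)$. Hence every $\beta'(t)$-rooted $\cH$-packing is an independent set of $G^\circ_\cH[\gamma(t)]$.

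Your diagnosis of the union construction $\beta(t)=\bigcup_{X\in\gamma(t)}X$ is inverted. There the edge axiom holds automatically: for $X\ni u$ and $Y\ni v$ the same adjacency gives $T_X\cap T_Y\neq\emptyset$. The packing bound, by contrast, does not merely ``need care''; it is false. For instance, let $G$ be a star with center $c$ and leaves $\ell_1,\dots,\ell_m$, extend each leaf by a path $\ell_i a_i b_i$, and let $\cH=\{P_3\}$. Every blob other than the sets $\{\ell_i,a_i,b_i\}$ contains $c$, and each $\{\ell_i,a_i,b_i\}$ is adjacent to all of them. So take a central bag consisting of all blobs through $c$, and for each $i$ a leaf bag obtained by adding $\{\ell_i,a_i,b_i\}$. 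This is a decomposition of $G^\circ_\cH$ whose bags are cliques. Yet the union of the central bag contains all $\ell_i$, and the $m$ pairwise anticomplete paths $\ell_i a_i b_i$ are rooted there. This is why the intersection $\bigcap_{X\ni v}T_X$, not the union, is the right choice.
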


\begin{proof}
First let $\cT=(T,\beta)$ be a tree decomposition of $G$.
We take the same tree decomposition of $G^\circ_\cH$ as in the proof of \cref{lem:P3-blob-preservation}:
$(T,\beta^\circ)$, where, for every $t \in V(T)$, we define
\[
        \beta^\circ(t)
        =
        \{X\in V(G^\circ_{\cH}) ~|~ X\cap \beta(t)\neq\emptyset\}        
\]
As for every $t \in V(T)$ we have $\alpha(G^\circ_{\cH}[\beta^\circ(t)]) = \alpha(G^\circ_{\cH}(\beta(t))) = \pi_{\cH}(G,\beta(t))$,
we conclude that $\treealpha(G^\circ_{\cH})
        \leq
        \treepi_{\cH}(G)$.

\medskip
Conversely, let $(T,\beta^\circ)$ be a tree decomposition of
$G^\circ_{\cH}$. For $X \in V(G^\circ_{\cH})$, define $T_X = \{ t \in V(T) ~|~ X \in \beta^\circ(t) \}$.
For every vertex $v\in V(G)$, let
\[
        \cX_v := \{X\in V(G^\circ_{\cH}) ~|~ v\in X\}.
\]
Then, we define 
\[
        T_v = \begin{cases}
        \bigcap_{X\in\cX_v}T_X & \text{ if } \cX_v \neq \emptyset,\\
         V(T) & \text{ if } \cX_v = \emptyset.
        \end{cases}
\]

Finally, for every $t \in V(T)$, we define $\beta(t):=\{v\in V(G) ~|~ t\in T_v\}$.

\begin{claim}
        $(T,\beta)$ is a tree decomposition of $G$.
\end{claim}
\begin{claimproof}
First, note that the set of nodes of $(T,\beta)$ whose bags contain $v \in V(G)$ is precisely $T_v$.
Let us argue that it is nonempty and connected.
This is obvious if $\cX_v=\emptyset$.

Otherwise, the sets in $\cX_v$ are pairwise intersecting and thus they form a clique in $G^\circ_\cH$.
Hence, by the properties of tree decompositions, there is a node $t \in V(T)$
such that $\cX_v \subseteq \beta^\circ(t)$.
As the intersection of subtrees is connected, we conclude that nodes in $T_v$ form a nonempty subtree.

What remains to show is that for every edge $xy \in E(G)$,
there is $t \in V(T)$ such that $x,y \in \beta(t)$, i.e., that $T_x\cap T_y \neq \emptyset$.
This is obvious if $\cX_x=\emptyset$ or $\cX_y=\emptyset$,
so assume otherwise.
Take arbitrary $X\in\cX_x$ and $Y\in\cX_y$.
If $X=Y$, then $T_X \cap T_Y = T_X \neq  \emptyset$.
If $X\neq Y$, then $X$ and $Y$ are adjacent in $G^\circ_{\cH}$:
either they intersect, or they are disjoint and there is an edge $xy$ between them.
Hence $T_X \cap T_Y \neq \emptyset$. It follows that the family
$
        \{T_X ~|~ X\in\cX_x\}\cup \{T_Y ~|~ Y\in\cX_y\}
$
is pairwise intersecting. By the Helly property for subtrees of a tree,
its total intersection is nonempty, and this total intersection is
contained in $T_x\cap T_y$. Thus some bag contains both $x$ and $y$.
Hence $(T,\beta)$ is a tree decomposition of $G$.
\end{claimproof}

It remains to show that

\begin{claim}
        For every $t \in V(T)$ it holds that $\pi_{\cH}(G,\beta(t)) \leq \alpha(G^\circ_\cH[\beta^\circ(t)])$.
\end{claim}
\begin{claimproof}
Fix $t\in V(T)$. Let $X$ be a set such that $X \cap \beta(t) \neq \emptyset$ and  $G[X]$ is isomorphic to a member of $\cH$.
Choose $v\in X\cap\beta(t)$.
Since $v\in\beta(t)$, we have $t\in T_v$. Since $X\in\cX_v$, the
definition of $T_v$ implies $t\in T_X$, and hence $X\in\beta^\circ(t)$.
Therefore every induced $\cH$-packing in $G$ whose members
intersect $\beta(t)$ is an independent set of
$G^\circ_{\cH}[\beta^\circ(t)]$. Consequently,
$
        \pi_{\cH}(G,\beta(t))
        \le
        \alpha(G^\circ_{\cH}[\beta^\circ(t)]).
$
\end{claimproof}

Consequently, $\treepi_{\cH}(G) \leq \treealpha(G^\circ_\cH)$,
which, combined with the previous inequality, yields $\treepi_{\cH}(G) = \treealpha(G^\circ_\cH)$.
As both constructions can clearly be performed in polynomial time, the proof is complete.
\end{proof}

\cref{lem:blob-equivalence} allows us to reduce the problem of finding a decomposition with small $\pi_{\cH}$ to the problem of finding a decomposition with small independence number.

We will need the following result.

\begin{lemma}[Dallard, Fomin, Golovach, Korhonen, and Milani\v{c};
Theorem~1 in~\cite{dallard2025computing}]
\label{thm:tree-alpha-approx}
There is an algorithm which, given an $n$-vertex graph $G$ and an integer $k$, runs in
time
$2^{\Oh(k^2)}n^{\Oh(k)}$
and either outputs a tree decomposition of $G$ with independence number at most $8k$,
or correctly reports that $\treealpha(G)>k$.
\end{lemma}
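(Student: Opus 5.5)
Since this is a cited result, the plan below reconstructs the Robertson--Seymour-style recursive-separator argument that such an approximation algorithm can rest on; it is not the argument of~\cite{dallard2025computing} itself. The central procedure is $\textsf{Decompose}(G,W,U)$. Its inputs are a vertex set $U$ inducing a connected part of the graph still to be decomposed and a boundary $W=N(U)$ with $\alpha(G[W])\le 6k$. Its output is a tree decomposition of $G[U\cup W]$ whose root bag contains $W$ and all of whose bags have independence number at most $8k$. Calling it with $W=\emptyset$ on each component of $G$ gives the lemma. If at some point the procedure cannot find the separator described below, it reports $\treealpha(G)>k$.

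\textbf{Step 1: existence of a balanced separator.} The key structural fact is the following. If $\treealpha(G)\le k$ and $W\subseteq V(G)$, then some bag $S$ of an optimal decomposition satisfies $\alpha(G[W\cap C])\le \alpha(G[W])/2$ for every component $C$ of $G-S$. To prove it, orient every edge $t_1t_2$ of the tree $T$ towards the side $T_2$ (the subtree containing $t_2$ after deleting $t_1t_2$) if the vertices appearing only in bags of $T_2$ contain more than half of $\alpha(G[W])$. At most one side can be heavy, because the two ``private'' vertex sets are separated by $\beta(t_1)\cap\beta(t_2)$ and are therefore anticomplete, so independent sets in them add up. A sink node $t$ of this orientation gives $S=\beta(t)$, and every component of $G-S$ lies within one side. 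This is the analogue of \cref{lem:balanced-bag}, with the weight measure replaced by $\alpha$ restricted to $W$.

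\textbf{Step 2: recursion and bounds.} Suppose $\textsf{Decompose}$ can find such an $S$, up to a constant-factor loss, say with $\alpha(G[S])\le 2k$ and the balance condition holding. It then creates the root bag $W\cup S$, whose independence number is at most $6k+2k=8k$. For each component $C$ of $G[U]-S$ it recurses with boundary $W_C=N(C)\subseteq (W\cap C')\cup S$. Here $C'$ denotes the component of $G-S$ containing $C$, so $W\cap C'$ has independence number at most $3k$ by Step~1. Hence $\alpha(G[W_C])\le 3k+2k\le 6k$, and the invariant is maintained.

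Progress is guaranteed as follows. If $\alpha(G[W])<6k$, we simply add one vertex of $U$ to $S$, so the boundary can grow by at most that vertex. Otherwise the separator strictly shrinks the boundary measure, and in either case $U$ strictly decreases, so the recursion has polynomial depth and size. The independence number of a given set can be tested in $n^{\Oh(k)}$ time by brute force.

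\textbf{Step 3, the main obstacle: finding $S$ algorithmically.} Exhaustive search over all sets with small independence number is not possible, because $S$ may be huge. The first attempt would be to guess a maximal independent set $I_S$ of $S$, with $|I_S|\le k$, giving $n^{\Oh(k)}$ choices, so that $S\subseteq N[I_S]$. We would also guess a maximum independent set $I_W$ of $W$ of size at most $6k$, together with the partition of $I_W$ into parts that the components are allowed to receive; this gives $2^{\Oh(k^2)}$ further choices. With these guesses fixed, we would look for a vertex cut inside $N[I_S]$ separating the guessed parts, using flow or min-cut computations between the parts. The delicate point is to certify that the resulting cut has independence number at most $2k$ and that balance holds for all independent subsets of $W$, not just for the guessed $I_W$. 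I expect this certification to be where the constant $8$ and the $2^{\Oh(k^2)}$ factor arise. Balance with respect to arbitrary independent subsets of $W$ would be enforced by iterating: whenever some component $C$ violates it, we exhibit a large independent set in $W\cap C$ in $n^{\Oh(k)}$ time and refine the guessed partition accordingly.
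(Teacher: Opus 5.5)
There is a genuine gap, and it sits exactly where the content of the cited theorem lies. The paper itself gives no proof: it imports the statement as Theorem~1 of \cite{dallard2025computing}. Your Steps~1 and~2 are the standard Robertson--Seymour scaffolding, and they are sound. The sink argument correctly produces a bag $S$ with $\alpha(G[S])\le k$ and $\alpha(G[W\cap C])\le \alpha(G[W])/2$ for every component $C$ of $G-S$; anticompleteness of the two private sides is what makes $\alpha$ additive there. The bookkeeping $6k+2k=8k$ for the bag and $3k+2k\le 6k$ for the new boundaries is also right, provided such a separator can actually be found. But the whole lemma hinges on Step~3. That step needs an algorithm that, in $2^{\Oh(k^2)}n^{\Oh(k)}$ time, outputs a $W$-balanced separator of independence number $\Oh(k)$ whenever a bag with independence number $k$ exists. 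Step~3 only describes what you hope such an algorithm would do.

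The one concrete mechanism you propose, a flow or min-cut computation inside $N[I_S]$, does not work. Cut computations optimize cardinality (or some additive weight), whereas $\alpha$ is not additive. Confining the cut to $N[I_S]$ gives no control either, since $N(I_S)$ may contain arbitrarily large independent sets. Here is a counterexample.
\begin{itemize}
\item Take vertices $a,b$, an independent set $J=\{j_1,\dots,j_m\}$ and a clique $K$ on $m+1$ vertices. Make $a$ complete to $J$, $J$ complete to $K$, and $K$ complete to $b$.
\item The bags $\{a\}\cup K$, $\{a,j_i\}\cup K$ for $i\in[m]$, and $K\cup\{b\}$ show $\treealpha(G)\le 2$.
\item The only minimal $a$--$b$ separators are $K$, with $\alpha=1$, and $J$, with $\alpha=m$.
\item A maximal independent set of $K$ is a single vertex $\kappa$, and $J\cup K\subseteq N[\kappa]$. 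So restricting to $N[I_S]$ does not exclude $J$, and a minimum-cardinality cut returns $J$, whose independence number is unbounded.
\end{itemize}
Your proposed repair for balance has the same problem. Refining the guessed partition of $I_W$ whenever some component has too large $\alpha(G[W\cap C])$ comes with no argument that it terminates within the time bound, or that the refined instance still admits a cut of small independence number. So the proposal reduces the lemma to its hard core without solving it. To count as a proof, it must either simply invoke \cite{dallard2025computing}, as the paper does, or supply a genuine subroutine for finding separators of independence number $\Oh(k)$.
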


We now derive the corresponding decomposition algorithm for induced
packing treewidth.

\thmcomputedecomposition*

\begin{proof}
Define $h=\max\{|V(H)| ~|~ H\in\cH\}$.
Let \[\cX = \{X\subseteq V(G) ~|~ G[X] \text{ is isomorphic to a member of } \cH\}.\]
Note that $|\cX| \le n^h$ and that $\cX$ can be computed in time $n^{\Oh(h)}$, which is polynomial in $n$ since $h$ is a constant.
If $\cX = \emptyset$, then we can return any tree decomposition $\cT$ of $G$,
for instance the one-node tree with bag $V(G)$. This decomposition satisfies $\pi_{\cH}(G,\mathcal T)=0$.
Thus, assume that $\cX \neq \emptyset$.

We construct the graph $G^\circ_{\cH}$; this can be done in polynomial time since $h$ is a constant.
Note that by \cref{lem:blob-equivalence}, we have $\treepi_{\cH}(G) = \treealpha(G^\circ_{\cH})$.

Run the algorithm from \cref{thm:tree-alpha-approx} on $G^\circ_{\cH}$ with parameter $k$.
If the algorithm reports that $\treealpha(G^\circ_{\cH})>k$, then by \cref{lem:blob-equivalence} we correctly report that $\treepi_{\cH}(G)>k$.
Otherwise, it returns a tree decomposition $\cT^\circ=(T,\beta^\circ)$ of
$G^\circ_{\cH}$ with independence number at most $8k$.
Then, by the algorithmic statement in \cref{lem:blob-equivalence},
we can construct a tree decomposition $\cT=(T,\beta)$ of $G$ such that, for every $t \in V(T)$,
\[
        \pi_{\cH}(G,\beta(t)) \leq \alpha(G^\circ_\cH[\beta^\circ(t)]) \leq 8k.
\]
Hence $\pi_{\cH}(G,\cT) \leq 8k$.
As $h$ is a constant, the running time is as claimed.
\end{proof}

\subsection{Hardness of computing induced packing treewidth}
Now, let us briefly discuss some hardness results related to computing induced packing treewidth and corresponding decompositions. Again, it turns out that many such results can be derived from the corresponding results for tree-independence number.

Let $G,H$ be graphs, where $H$ is connected.
Let $G * H$ be any graph obtained from $G$ by adding, for every $v \in V(G)$,
a copy of $H$ and identifying one of its vertices with $v$.
The vertex set of the copy of $H$ attached to $v$, including $v$ itself, is denoted by $V_v$.

\begin{lemma}\label{lem:addsatelites}
        Let $H$ be a connected graph.
        For any graph $G$ we have $\treepi_{H}(G * H) = \treealpha(G)$.
\end{lemma}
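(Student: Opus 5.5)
The plan is to prove the two inequalities separately. Throughout, we use that the only edges of $G*H$ leaving a set $V_v$ are edges of $G$ at the vertex $v$, since copies of $H$ are attached only at their identified vertex. If $V(G)=\emptyset$, both sides are $0$, so assume $G$ is nonempty; then $\treealpha(G)\geq 1$.

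\emph{Lower bound $\treepi_H(G*H)\geq\treealpha(G)$.} Let $(T,\beta)$ be a tree decomposition of $G*H$. As in the proof of \cref{lem:structural-package}~(\ref{sprop:3}), the restriction $(T,\beta')$ with $\beta'(t)=\beta(t)\cap V(G)$ is a tree decomposition of $G$. Fix a node $t$ and an independent set $I\subseteq\beta'(t)$ of $G$. I claim the sets $V_v$ for $v\in I$ form a $\beta(t)$-rooted induced $H$-packing:
\begin{itemize}
\item each $(G*H)[V_v]$ is an induced copy of $H$, because no edges were added inside $V_v$;
\item each $V_v$ meets $\beta(t)$ at $v$;
\item for distinct $u,v\in I$, the sets $V_u$ and $V_v$ are disjoint and anticomplete, because the only possible edge between them is $uv$, which is absent.
\end{itemize}
Hence $\pi_H(G*H,\beta(t))\geq\alpha(G[\beta'(t)])$ for every $t$. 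So the independence number of $(T,\beta')$ is at most $\pi_H(G*H,(T,\beta))$, and minimizing over $(T,\beta)$ gives the inequality.

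\emph{Upper bound $\treepi_H(G*H)\leq\treealpha(G)$.} Take a tree decomposition $(T,\beta)$ of $G$ with independence number $k=\treealpha(G)$. For each $v\in V(G)$, choose a node $t_v$ with $v\in\beta(t_v)$ and attach a new leaf $\ell_v$ to it with bag $V_v$. Every new vertex lies only in the bag of $\ell_v$, every new edge lies inside $V_v$, and $v$'s subtree stays connected. Thus this is a tree decomposition of $G*H$, and it remains to bound the packing number of each bag.

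\emph{Old bags $\beta(t)\subseteq V(G)$.} Given a $\beta(t)$-rooted induced $H$-packing, pick one vertex of $\beta(t)$ from each member. These vertices are distinct, and they are pairwise non-adjacent because the members are disjoint and anticomplete. So they form an independent set in $G[\beta(t)]$, and the packing has size at most $k$.

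\emph{Leaf bags $V_v$.} This is the step needing the one real observation. Any induced copy $X$ of $H$ meeting $V_v$ must contain $v$. Indeed, $|X|=|V(H)|=|V_v|$, so $X\not\subseteq V_v\setminus\{v\}$. Hence $X$ contains a vertex outside $V_v\setminus\{v\}$. Since $X$ is connected and $V_v\setminus\{v\}$ has neighbours only inside $V_v$, $X$ must pass through $v$. Two disjoint members cannot both contain $v$, so such a packing has size at most $1\leq k$.

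Therefore the constructed decomposition has induced $H$-packing number at most $k$, which completes the proof. The main obstacle is exactly this leaf-bag argument, i.e.\ ruling out copies of $H$ that touch $V_v$ without passing through $v$; it is handled by the vertex-count comparison together with the connectivity of $H$.
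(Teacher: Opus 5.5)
Your proof is correct and follows the paper's argument step by step. For the lower bound you restrict a decomposition of $G*H$ to $V(G)$ and turn an independent set $I$ in a bag into the packing $\{V_v : v\in I\}$; for the upper bound you attach a leaf bag $V_v$ at a node containing $v$. Your vertex-count argument for the leaf bags spells out what the paper only asserts there, namely that such a bag has packing number $1$ by connectivity of $H$.
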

\begin{proof}
        Let $(T,\beta)$ be a tree decomposition of $G$ of independence number $\treealpha(G)$.

        For every $t \in V(T)$, we define $\beta'(t) = \beta(t)$.                
        Furthermore, for each vertex $v \in V(G)$, we pick one node $t_v$ of $T$ such that $v \in \beta(t_v)$,
        add a new node $t'_v$ adjacent to $t_v$, and define $\beta'(t'_v) = V_v$.
        Let $T'$ be the tree obtained this way.
        It is straightforward to verify that $(T',\beta')$ is a tree decomposition of $G * H$.

        Pick any $t \in V(T')$ and consider a largest induced $\beta'(t)$-rooted $H$-packing $\cU$ in $G * H$.
        If $t \in V(T') \setminus V(T)$, then clearly it holds that $\pi_H(G*H,\beta'(t)) = 1$; here we use that $H$ is connected.

        Thus, suppose that $t \in V(T)$, i.e., $\beta'(t) = \beta(t)$.
        Every element of $\cU$ intersects $\beta(t)$ and the intersection vertices form an independent set.
        Thus, we have $\pi_{H}(G*H, \beta'(t)) \leq \alpha(G[\beta(t)]) \leq \treealpha(G)$.
        This shows that $\treepi_{H}(G * H) \leq \treealpha(G)$.

        Now, let us consider a tree decomposition $(T,\beta')$ of $G*H$ witnessing $\treepi_H(G*H)$.
        For each $t \in V(T)$, define $\beta(t) = \beta'(t) \cap V(G)$.
        Then $(T,\beta)$ is a tree decomposition of $G$.

        Pick any $t \in V(T)$, and consider an independent set $I$ contained in $G[\beta(t)]$.
        For every $v \in I$, the set $V_v$ induces a copy of $H$ in $G*H$.
        As $I$ is independent in $G$, these copies are pairwise anticomplete.
        Moreover, every such copy intersects $\beta(t) \subseteq \beta'(t)$.
        Consequently, $|I| \leq \pi_H(G*H,\beta'(t)) \leq \treepi_H(G*H)$.

        Therefore $\alpha(G[\beta(t)]) \leq \treepi_H(G*H)$ for every $t \in V(T)$,
        which yields $\treealpha(G) \leq \treepi_H(G*H)$.
        Combined with the previous inequality, this proves the lemma.
\end{proof}

We know the following hardness results concerning $\treealpha$.

\begin{theorem}[\cite{dallard2024treewidth,dallard2025computing}]\label{thm:treealpha-hardness}~
\begin{enumerate}
        \item It is \NP-hard to decide whether a given graph $G$ satisfies $\treealpha(G) \leq 4$.
        \item It is \Wone-hard to approximate tree-independence number within any constant factor.
        \item Assuming Gap-ETH, there is no $f(k) \cdot n^{o(k)}$-time $g(k)$-approximation algorithm for tree-independence number, for any computable functions $f$ and $g$.
\end{enumerate}
\end{theorem}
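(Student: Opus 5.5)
This statement is quoted from~\cite{dallard2024treewidth,dallard2025computing}, so the paper may simply cite it. If I were to reprove it, I would split it into two parts. Items~(2) and~(3) should follow from one exact reduction from \textsc{Max Independent Set}. Item~(1) needs a separate NP-hardness gadget.

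For items~(2) and~(3) I would use a join construction. Given a graph $G$, let $G'$ consist of two disjoint copies $G_1,G_2$ of $G$, with every vertex of $G_1$ adjacent to every vertex of $G_2$.

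\textbf{Upper bound.} The one-bag decomposition gives $\treealpha(G') \le \alpha(G')$. Every independent set of $G'$ lies inside a single copy, so $\alpha(G') = \alpha(G)$, and hence $\treealpha(G') \le \alpha(G)$.

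\textbf{Lower bound.} I would prove that in every tree decomposition of a join $A+B$, some bag contains all of $A$ or all of $B$. This is the same subtree argument the paper uses for bicliques around \cref{lem:structural-package}. Suppose no bag contains $A$ and no bag contains $B$. For an edge $st$ of $T$, the adhesion $\beta(s)\cap\beta(t)$ separates the vertices appearing only on the $s$-side from those appearing only on the $t$-side. Because $A$ is complete to $B$, any such separation must have one side entirely inside $A$ or entirely inside $B$. Orienting each edge of $T$ towards the side that still has uncovered vertices of both $A$ and $B$ produces a sink node, and its bag must contain $A$ or $B$. I would work out this orientation argument carefully, since it is the only nontrivial step of this part. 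Given the claim, some bag contains a copy of $V(G)$, so $\treealpha(G') \ge \alpha(G)$.

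Together the two bounds give $\treealpha(G') = \alpha(G)$, with $|V(G')| = 2|V(G)|$ and parameter unchanged. Therefore any $g(k)$-approximation for $\treealpha$ running in time $f(k)n^{o(k)}$ yields one for $\alpha$, which is \textsc{Max Clique} on the complement graph. Items~(2) and~(3) then follow from known results:
\begin{itemize}
\item W[1]-hardness of approximating $k$-\textsc{Clique} within any constant factor (Lin);
\item the Gap-ETH lower bound for $g(k)$-approximation of \textsc{Clique} in time $f(k)n^{o(k)}$ (Chalermsook et al.~\cite{DBLP:journals/siamcomp/ChalermsookCKLM20}).
\end{itemize}

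Item~(1) cannot come from this reduction, because $\alpha(G)\le 4$ can be checked in polynomial time. I expect it to be the main obstacle. Here one needs a gadget in which a tree decomposition with all bags of independence number at most $4$ exists exactly when an encoded NP-hard instance is solvable. My first attempt would reuse the join/biclique principle locally. Attach to each variable (or vertex) of an NP-hard instance, say \textsc{3-Coloring} or \textsc{3-SAT}, a small join gadget that forces one of two sides into a common bag. Then use pairwise anticomplete pendant structures so that a bad choice pushes some bag to independence number $5$, while consistent choices keep every bag at $4$. Verifying the completeness direction, i.e.\ exhibiting the decomposition with bags of independence number at most $4$, is the part I would expect to need the most care. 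Failing a clean gadget of my own, I would follow the construction of Dallard, Milani\v{c}, and \v{S}torgel~\cite{dallard2024treewidth} directly.
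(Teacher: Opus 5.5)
The paper does not prove this statement; it only cites \cite{dallard2024treewidth,dallard2025computing}. Your derivation of items~(2) and~(3) is therefore a genuinely self-contained route, and it is correct. For the join $G'$ of two copies $G_1,G_2$ of $G$ one has $\treealpha(G')=\alpha(G)$.

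The lower bound is quicker via Helly than via your orientation argument. If no bag contains $V(G_1)$, then two vertices $a_1,a_2\in V(G_1)$ have disjoint subtrees. Every vertex of $G_2$ is adjacent to both, so its subtree contains the path joining these two subtrees. Hence all vertices of $G_2$ share a bag.

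A $g(k)$-approximation for $\treealpha$ run on $G'$ then distinguishes $\omega(\overline{G})\le k$ from $\omega(\overline{G})>g(k)$. For~(2) this is exactly what Lin's constant-gap \textsc{Clique} hardness rules out.

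For~(3), be careful with the bookkeeping: your parameter $k$ is the \emph{smaller} threshold. What you need from Chalermsook et al.~\cite{DBLP:journals/siamcomp/ChalermsookCKLM20} is that distinguishing $\omega\ge K$ from $\omega<r$ requires $n^{\Omega(r)}$ time even for arbitrarily large gaps; here $r=k+1$ and $K=g(k)+1$. A lower bound ``for $g(k)$-approximating \textsc{Clique} in time $f(k)n^{o(k)}$'', as you phrase it, is not the right statement. Finding a clique of size $k/g(k)$ in a graph containing a $k$-clique takes only $n^{\Oh(k/g(k))}$ time by brute force.

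For item~(1), your proposal contains no proof. The gadget paragraph fixes no construction and verifies neither direction. It also does not address the real difficulty: a join gadget only forces \emph{some} bag to contain one of its sides, and nothing in your sketch forces the bags chosen for different gadgets to meet. That coupling is where a reduction with a fixed threshold has to do its work.

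Moreover, the NP-hardness of deciding $\treealpha(G)\le 4$ is the para-NP-hardness theorem of Dallard, Fomin, Golovach, Korhonen, and Milani\v{c}~\cite{dallard2025computing}. That is the source to follow, not \cite{dallard2024treewidth}. Since the paper itself only cites the statement, relying on \cite{dallard2025computing} for~(1) is acceptable, but you should say so outright. As written, your argument establishes only items~(2) and~(3).
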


As \cref{lem:addsatelites} gives a polynomial-time reduction from computing tree-independence number to computing induced $H$-packing treewidth for every connected graph $H$, and \cref{lem:structural-package}~(\ref{sprop:4}) lets us pass from a singleton family $\{H\}$ to any family $\cH$ containing $H$, we immediately obtain the following.

\thmcomputehardness*

%% file: sec-conclusion.tex
Let us conclude the paper with discussing some possible directions for future research.
\subsection{The case of subdivided claws.}\label{sec:claws}
\input{sec-claws}

\subsection{Other open problems.}
Let us conclude the paper by pointing out some open problems and directions for future research.
First, it would be interesting to see if our quasipolynomial-time algorithms can be improved to polynomial time.

\begin{question}
    For what families $\cH$ can \MWIS and related problems be solved in polynomial time on graphs of bounded induced-$\cH$-packing treewidth?
\end{question}

Next, it would be interesting to see how far the algorithmic tractability of graphs of bounded induced-$\cH$-packing treewidth can be extended to generalizations of \MWIS. In particular, the following question is a natural ``local'' extension of \cref{conj:indminors}.

\begin{question}
    Let $H$ be a planar graph and let $\cH$ be the family of all graphs that contain $H$ as a contraction.
    Is it true that $(\tw \leq r,\psi)$-\MWIS and \lcol{3} are tractable in graphs of bounded induced-$\cH$-packing treewidth?
\end{question}

Finally, it would be interesting to consider structural properties of graphs of bounded induced-$\cH$-packing treewidth for various families $\cH$.
First, recall that for every planar graph $H$,
$H$-induced-minor-free graphs are conjectured to admit balanced separators dominated by few vertices~\cite{gartland2023quasi}.
Could this also be true in the ``local'' setting?

\begin{question}
    Let $H$ be a planar graph and let $\cH$ be the family of all graphs that contain $H$ as a contraction.
    Is it true that graphs of bounded induced-$\cH$-packing treewidth admit balanced separators dominated by a polylogarithmic number of vertices?
\end{question}

Next, it is known that graphs of bounded induced matching treewidth (i.e., $\treepi_{P_2}$) are $\chi$-bounded~\cite{abrishami2025excluding}. How far does this property extend to other families $\cH$?

\begin{question}
    For which families $\cH$ are graphs of bounded induced-$\cH$-packing treewidth $\chi$-bounded?
\end{question}

%% file: sec-claws.tex
Recall that for \MWIS there is also a broader family of graph classes obtained by excluding a fixed forest of subdivided claws as an \emph{induced subgraph}; in this setting, quasipolynomial-time algorithms are known and polynomial-time algorithms are conjectured.

One might wonder if this picture can be extended to the ``local'' setting.
We show that this is not the case, at least if we literally ask for induced subgraphs.

The crucial tool is the so-called \emph{Poljak's subdivision trick}~\cite{Po74}. Let $G$ be any graph and let $G'$ be obtained from dividing any edge of $G$ twice, i.e., an edge $uv$ is replaced by a path $u-x-y-v$, where $x,y$ are new vertices.
Then, $\alpha(G') = \alpha(G)+1$.
Consequently, we can reduce solving \MWIS on $G$ to the same problem on $G'$.

\begin{theorem}
    \label{thm:claw}
    Let $H$ be a subdivided claw.
    The \MWIS problem is \NP-hard in graphs $G$ with $\treepi_{H}(G) = 1$.
    Furthermore, the problem cannot be solved in time subexponential in $V(G)$, unless the ETH fails. 
\end{theorem}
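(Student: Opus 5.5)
Let $H$ be a subdivided claw with center $c$ and three legs of lengths $a\le b\le d$ (with $a\ge 1$), so $|V(H)| = 1+a+b+d$. We reduce from \MWIS on general graphs, which is \NP-hard and has no subexponential algorithm under ETH, even on graphs of maximum degree $3$ with $\Oh(n)$ edges.

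\textbf{Construction.} Given such a graph $G$, choose a large integer $s$, say $s \ge 3(|V(H)|+1)$. Let $G'$ be obtained by subdividing every edge of $G$ an even number of times, namely $2s$ times. Poljak's trick, applied repeatedly, gives $\alpha(G') = \alpha(G) + s|E(G)|$. For the weighted version, give every new vertex weight $1$ and scale the original weights so that the same shift argument works. Since $|V(G')| = \Oh(s\cdot|E(G)|) = \Oh(n)$ for constant $s$, both \NP-hardness and the ETH lower bound transfer. It remains to show $\treepi_{H}(G') \le 1$. In fact we aim for a decomposition in which every bag meets at most one induced copy of $H$ up to anticompleteness.

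\textbf{Decomposition.} In $G'$, every induced copy of $H$ must have its center at a vertex of degree at least $3$. Such a vertex is an original vertex $v\in V(G)$, since subdivision vertices have degree $2$. Moreover, a copy centered at $v$ lies entirely within distance $d < s$ of $v$, so it stays inside the subdivided edges incident to $v$ and contains no other original vertex.

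We build the decomposition as follows. Take a tree decomposition of $G'$ where each bag is of one of two kinds.
\begin{itemize}
\item A star bag around each original vertex $v$: the bag consists of $v$ together with the first few subdivision vertices on each incident path.
\item Path bags of size $2$ along the middle portion of each long subdivided path.
\end{itemize}
Concretely, for each subdivided edge $u - x_1 - \dots - x_{2s} - w$ we use consecutive bags $\{x_i, x_{i+1}\}$. We attach these to a bag $\{v\}\cup\{\text{neighbors of } v\}$ at each end. The overall tree shape mirrors the graph $G$, which need not be a tree, so we must break cycles. For this, on each subdivided path we choose a middle edge $x_s x_{s+1}$ and put $x_s$ into all bags of a path from one end; this is the standard way to give a cycle-with-pendant-paths structure a tree decomposition. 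Alternatively, and more simply, we take the bag $\{x_s\} \cup$ (the far side) approach.

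\textbf{Main obstacle and verification.} The cycle structure of $G$ forces some bags to contain vertices far from one another. The key check is that any two anticomplete induced copies of $H$ have centers at two distinct original vertices $u,v$. So we need each bag to meet copies centered at no more than one original vertex, or at two copies that touch.

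I would handle this as follows. Fix a tree decomposition of the subdivided graph obtained by taking, for each edge $uv$ of $G$, the path $u\ldots w\ldots v$ through its midpoint $w$, and use a decomposition of width $2$ of $G'$ minus the set $M$ of midpoints. Then $G' - M$ is a disjoint union of stars of subdivided paths, which is a forest. We then add all of $M$ to every bag.

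This creates a danger: a copy of $H$ centered at $u$ reaches distance at most $d < s$, so it does not contain midpoints. Hence copies meeting a bag must meet it at its forest part. This is a single subdivided star of radius $s$ plus possibly nothing else, provided we lay out the forest so that each bag's forest part lies within one star. Consequently every bag meets copies of $H$ centered at only one original vertex, and any two such copies share the center, so they are not anticomplete, and $\pi_H \le 1$.

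The subtle point I expect to require care is making sure adding $M$ everywhere, which is legitimate because the edges from $M$ go to their two neighbors, which lie in bags containing all of $M$, does not let a copy of $H$ centered near a midpoint exist. That holds since midpoints have degree $2$ and copies cannot reach them when $s > d$.
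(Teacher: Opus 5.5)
Your final construction (adding the set $M$ of middle vertices to every bag of a decomposition of the star forest $G'-M$) is correct and follows the paper's proof. It uses the same reduction, subdividing every edge of a subcubic instance an even number of times via Poljak's trick, and the same key observation: no induced copy of $H$ reaches the middle vertices, and each component of $G'-M$ contains only one possible center. The paper's version is just tidier (a single central bag $M$, with the balls of radius $t$ around the core vertices as leaf bags), so you can discard the muddled cycle-breaking sketch in your first decomposition paragraph.
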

\begin{proof}
    The graph $H$ consists of three paths, each starting at the unique degree-3 vertex and ending at a degree-1 vertex.
    Let $t$ be the number of vertices on the longest of these paths.

    It is known that \MWIS  in cubic graphs is \NP-hard and cannot be solved in subexpoential time, unless the ETH fails~\cite{DBLP:journals/tcs/GareyJS76}.
    Let $G$ be a cubic graph on $n$ vertices.

    Let $G'$ be obtained from $G$ by subdividing each edge $2t$ times. In other words, $G'$ consists of vertices of $G$ called \emph{core vertices} joined by paths, each with $2t$ internal vertices.

    For a core vertex $v$, let $S(v)$ be set of vertices at distance $t$ from $v$. Note that for reach core vertex $v$,
    the subgraph of $G'$ induced by $S(v)$ is isomorphic to the three subdivided claw, each of whose three paths has $t+1$ vertices.
    For each edge $uv$ of $G$, let $\mathsf{mid}(uv)$ be two middle vertices of the path in $G'$ corresponding to the edge $uv$. Note that one of these vertices belongs to $S(u)$ and the other belongs to $S(v)$.

    Since $G'$ was obtained from $G$ by multiple applications of Poljak's subdivision trick, solving \MWIS on $G$ is equivalent to solving \MWIS on $G'$.
    Furthermore, the number of vertices of $G'$ is linear in $n$ since $G$ is cubic and $t$ is a constant.

    Thus, we are left with showing that $\treepi_{H}(G') =1$.
    Construct a tree decomposition $(T,\beta)$ of $G'$ as follows.
    We start with a node $t_{\mathsf{mid}}$ whose bag is 
    \[
        \beta(t_{\mathsf{mid}}) = \bigcup_{uv \in E(G)} \mathsf{mid(uv)}.
    \]
    Then, for every vertex $v$, we create a new node $t_v$ adjacent to $t_{\mathsf{mid}}$ and set its bag to $\beta(t_v) = S(v)$.
    It is straightforward to verify that $(T,\beta)$ is a tree decomposition of $G'$.

    Note that for each $v$, it holds that $\pi_H(G,\beta(t_v)) = 1$. Indeed, $S(v)$ contains a copy of $H$ centered at $v$, and other core vertices at at distance greater than $t$ from $S(v)$ so no vertex from this set belongs to any other copy of $H$.
    Similarly, we have $\pi_H(G,\beta(t_\mathsf{mid})) = 0$, as every core vertex is at distance $t$ from $\beta(t_{\mathsf{mid}})$. Consequently, every vertex in $\beta(t_\mathsf{mid})$ is too far away from core vertices to belong to a copy of $H$.
\end{proof}

The hardness result in \cref{thm:claw} is somehow unsatisfying and it suggests the following refinement of the setting.
Let $H$ be a subdivided claw, and let $\mathsf{subdiv}(H)$ be the family of its all subdivisions.
Observe that a graphs is $H$-free if and only if it is $\mathsf{subdiv}(H)$-free.
However, $\treepi_H(G)$ is not equivalent to  $\treepi_{\mathsf{subdiv}(H)}(G)$, in particular, $\pi_{\mathsf{subdiv{H}}}(G',\beta(t_{\mathsf{mid}}))$ is unbounded (equal to $\alpha(G)$).

Could it be true that \MWIS is tractable in graphs with bounded $\treepi_{\mathsf{subdiv}(H)}$, for any subdivided claw $H$?
The smallest case is for $H$ being the claw, i.e., the three-leaf star $K_{1,3}$.
We think it might be the case and perhaps the right tool for this task are \emph{extended strip decompositions}~\cite{ChudnovskyS08e,gartland2023quasi}.

\begin{question}
    Is \MWIS tractable in graphs of bounded induced-$\mathsf{subdiv}(K_{1,3})$-packing treewidth?
\end{question}